\newtheorem{The}{Theorem}[section]
\newtheorem{Lem}[The]{Lemma}
\newtheorem{Pro}[The]{Proposition}
\newtheorem{Rem}[The]{Remark}
\newtheorem{Cor}[The]{Corollary}
\newtheorem{Def}[The]{Definition}
\numberwithin{equation}{section}
\newcommand{\RR}{\mathbb R}
\newcommand{\QQ}{\mathbb Q}
\newcommand{\ZZ}{\mathbb Z}
\newcommand{\TT}{\mathbb T}
\newcommand{\Aa}{\mathbb A}
\newcommand{\RB}{\mathcal{R}}
\newcommand{\CB}{\widetilde{\mathcal{R}}}
\newcommand{\KK}{\mathcal{K}}
\newcommand{\DD}{\mathcal{D}}
\def\dx{\dot{x}}
\def\ddx{\ddot{x}}
\def\ddr{\ddot{r}}
\def\dr{\dot{r}}
\def\dt{\dot{\theta}}
\def\th{\hat{\tau}}
\def\nh{\hat{\eta}}
\def\dR{\dot{R}}
\def\ddR{\ddot{R}}
\def\eps{\varepsilon}
\begin{document}

\title{Chaotic motion in the breathing circle billiard}
\author{Claudio Bonanno\thanks{
Dipartimento di Matematica, Universit\`a di Pisa, Largo Bruno 
Pontecorvo 5, 56127 Pisa, Italy. E-mail: 
\texttt{claudio.bonanno@unipi.it}.}\and Stefano Mar\`o\thanks{
Dipartimento di Matematica, Universit\`a di Pisa, Largo Bruno 
Pontecorvo 5, 56127 Pisa, Italy. E-mail: 
\texttt{stefano.maro@unipi.it}.}}
\date{}
\maketitle

\abstract{
We consider the free motion of a point particle inside a circular billiard with periodically moving boundary, with the assumption that the collisions of the particle with the boundary are elastic so that the energy of the particle is not preserved. It is known that if the motion of the boundary is regular enough then the energy is bounded due to the existence of invariant curves. We show that it is nevertheless possible that the motion of the particle is chaotic, also under regularity assumptions for the moving boundary. More precisely, we show that there exists a class of functions describing the motion of the boundary for which the billiard map admits invariant probability measures with positive metric entropy. The proof relies on variational techniques based on Aubry-Mather theory.
}

\bigskip
\textbf{Keywords:} time-dependent billiards, Aubry-Mather theory, generating function, chaotic motion.

\bigskip
\textbf{2020MSC:} 37C83, 37B40, 37E40 

\section{Introduction}

A mathematical billiard with moving boundary is a region of the plane instantaneously bounded by a closed curve changing with time. The billiard problem then consists of the free motion of a point particle inside this region colliding elastically with the moving boundary.

The study of time-dependent billiards can be motivated physically by the study of confined Lorentz gas or by some models in nuclear physics (see for instance \cite{blocki_et_al,burgio_et_al,glanz,pais}). In the physical contexts, a relevant question is whether the elastic bounces can make the energy of the particle grow infinitely. The question was raised by Fermi \cite{fermi} trying to explain the high velocity gained by photons. A mathematical formulation of the problem was proposed by Ulam and is now called the Fermi-Ulam model. It describes the free motion of a particle between two parallel walls moving periodically. ``Fermi acceleration'' then occurs if the elastic bounces with the moving walls make the energy of the particle tend to infinity. It is known that it depends on the regularity of the motion of the walls. Actually, it was proved in \cite{levi} that if the motion of the wall is at least $C^6$ then KAM theory applies and the energy remains bounded. On the other hand, in \cite{zar} it is shown how to construct a motion of the walls that is only continuous and allows the energy to grow to infinity (see also \cite{maro4} for a similar result in a related impact model).   

Time-dependent billiards can be seen as natural generalisations of the Fermi-Ulam model and the question on the existence of Fermi acceleration naturally extends to this context. In this case, the answer also depends on the geometry of the boundary.  

If the boundary is a moving ellipse then it has been proved in \cite{dettman_ellipse} that it is possible to construct orbits that gain energy. Moreover, the existence of unbounded motions is a symptom of complex dynamics. In fact, in \cite{dettman_ellipse} it is also proved that the phenomenon of splitting of the separatrices occurs and a scattering map can be defined. We refer to \cite{turaev_chaos} for more insight on the topic of Fermi acceleration in general time-dependent billiards.

Instead, if the boundary is a circle with radius varying smoothly with time then KAM theory applies and the energy remains bounded \cite{sylvie1}. Since the motion along a diameter is described by the classical Fermi-Ulam model, the counterexample constructed in \cite{zar} shows that the regularity of the motion of the boundary is a fundamental assumption in \cite{sylvie1}.

In this paper we deal with the case of a region with circular boundary of radius $R(t)$ periodic in time. The region is called the breathing circle billiard. We show that, even if the motion of the boundary is regular, chaotic phenomena can occur. More precisely, we find a class of function $\CB$ such that if $R\in\CB$ then there exist many invariant measures of the associated billiard map with positive metric entropy. The class $\CB$ has a somehow technical definition but it can be shown that a representative is
\[
R(t) =M+\delta\, \sin(2\pi t)
\]
with $M$ sufficiently large with respect to $\delta$.

The dynamics of a time-dependent billiard whose boundary remains a convex curve can be described by a 4-dimensional exact symplectic map \cite{koiller_et_al}. However, for the breathing circle billiard, the angular momentum is a first integral so that the dynamics can be reduced to a two dimensional map of the cylinder that turns out to be exact symplectic and twist. We show that the reduced map enters in the variational framework of Aubry-Mather theory, which implies the existence of interesting invariant sets. In particular a key role is played by invariant curves with irrational rotation numbers. More precisely, it is known that the lack of invariant curves for a given irrational rotation number implies the existence of chaotic motion (see \cite{angenent1,angenent2,forni}).
In the last decades many results have been proved in the direction of ``breaking'' invariant curves, also in higher dimensions, giving rise to the so called ``converse KAM'' theory \cite{haro,mkmeiss,percivalmk}.    

In this paper we use a criterion based on the variational approach of Aubry-Mather theory in the spirit of what is done in \cite{maro5}. More precisely, it is known that orbits of exact symplectic twist maps correspond to stationary points of an action, and the ones on invariant curves are minimal. As a consequence, the second variation of the action must be positive on orbits lying on invariant curves. The main idea to prove our main result is then to show that if $R\in\CB$ the second variation of the action is negative in a zone of the phase space, preventing the existence of invariant curves for some irrational rotation numbers. From a technical point of view, in order to compute the second variation of the action, one needs the generating function of the associated diffeomorphism. A considerable part of the paper is dedicated to get an explicit formulation of the generating function of the billiard map. To this aim we follow the idea, used in \cite{kunzeortega} for the (non-periodic) Fermi-Ulam model, that the generating function is given by the Lagrangian action of a solution of the Dirichlet problem between two consecutive impacts. We conclude noting that a consequence of our approach is the existence of Aubry-Mather sets with different rotation numbers, giving rise to periodic and quasi-periodic motions of the breathing circle billiard. Similar results have been obtained for other systems with impacts such as bouncing balls \cite{maro3}.

The paper is organised as follows. In Section \ref{sec:state} we state the problem and the main results of the paper. In Section \ref{sec:dir} we study the Dirichlet problem between two consecutive impacts, and its results are used in Section \ref{sec:genfun} to compute the generating function of the billiard map. In Section \ref{sec:per} we describe the periodic and quasi-periodic motions. Section \ref{sec:chaos} is dedicated to the proof of the main theorem on chaotic motion. Appendix \ref{technical} contains the proof that the class of functions $\CB$ is not empty. Finally, the main results of Aubry-Mather theory used in the paper are collected in Appendix \ref{sec:am}.

\bigskip
\textbf{Acknowledgements:} This research was partially supported by the PRIN Grant 2017S35EHN of the Ministry of Education, University and Research (MIUR), Italy. It   is   also   part   of   the   authors’   activity   within   the   DinAmicI   community (\texttt{www.dinamici.org}) and  the  Gruppo  Nazionale  di  Fisica  Matematica,  INdAM, Italy.

\section{Statement of the problem and main results}\label{sec:state}

Let $R(t)$ be a strictly positive function and let the breathing circle $\DD_t$ be the bounded region of the plane with moving boundary $\partial \DD_t =\{x\in\RR^2 \: : \: |x|^2 = R^2(t) \}$. Let us consider a particle of unitary mass moving freely inside $\DD_t$ and satisfying the elastic impact law at every bounce on the boundary. Assume that $\partial \DD_t $ is positively oriented and denote by $\th$, $\nh$ the unitary tangent and outward normal vectors at points of $\partial \DD_t $. By a bouncing solution we mean a continuous function
\[
x : \RR\to \RR^2
\]
with a sequence of impact times $(t_n)_{n\in\ZZ}$ satisfying
\begin{enumerate}
\item $\ddx(t) =0$ for $t\in(t_n,t_{n+1})$ for every $n\in\ZZ$,
\item $|x(t)|< R(t) $ for $t\in(t_n,t_{n+1})$ for every $n\in\ZZ$,
\item $|x(t_n)| = R(t_n)$ for every $n\in\ZZ$,
\item $\dx(t_n^+)\cdot \th_n =\dx(t_n^-)\cdot \th_n$ and $\dx(t_n^+)\cdot \nh_n =-\dx(t_n^-)\cdot \nh_n + 2\dR(t_n)$, where $\th_n$, $\nh_n$ are the unitary tangent and outward normal vectors at $x(t_n)$, and $\dx(t_n^-)$ and $\dx(t_n^+)$ denote the velocity vector just before and after the bounce at time $t_n$ respectively.  
 \end{enumerate}

Condition $4$ describes the elastic bouncing condition: the tangent component of the velocity is preserved and the impulse is given in the normal direction. Note that if $\dR=0$ we get the usual mirror law.  

\begin{Pro}
Let $x(t)$ be a bouncing solution with impact times $(t_n)_{n\in\ZZ}$, then the angular momentum  $C(t) = x(t)\wedge \dx(t)$ is constant for every $t$.
\end{Pro}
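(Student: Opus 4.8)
The plan is to split the verification into two parts: the behaviour of $C(t)$ on the open intervals between consecutive impacts, and the behaviour across an impact time. On an interval $(t_n,t_{n+1})$ the motion is free, so by condition $1$ we have $\ddx(t)=0$ and hence $\frac{d}{dt}C(t)=\dx(t)\wedge\dx(t)+x(t)\wedge\ddx(t)=0$. Thus $C$ is constant on each such interval, and it remains only to check that the one-sided limits agree at the impact times, i.e. $C(t_n^-)=C(t_n^+)$ for every $n$.

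For the impact, the key observation is geometric: since the boundary $\partial\DD_{t_n}$ is the circle of radius $R(t_n)$ centred at the origin, the point $x(t_n)$ satisfies $|x(t_n)|=R(t_n)$ and therefore the outward normal $\nh_n$ at $x(t_n)$ is parallel to $x(t_n)$ itself; equivalently $x(t_n)\wedge\nh_n=0$. Next I would decompose the velocity jump prescribed by condition $4$: writing $\dx(t_n^\pm)=(\dx(t_n^\pm)\cdot\th_n)\,\th_n+(\dx(t_n^\pm)\cdot\nh_n)\,\nh_n$, condition $4$ gives that the tangential components coincide and the normal components differ, so
\[
\dx(t_n^+)-\dx(t_n^-)=\big(\dx(t_n^+)\cdot\nh_n-\dx(t_n^-)\cdot\nh_n\big)\,\nh_n = 2\dR(t_n)\,\nh_n ,
\]
which is a vector parallel to $\nh_n$.

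Combining these, and using that $x$ is continuous so $x(t_n^-)=x(t_n^+)=x(t_n)$, we get
\[
C(t_n^+)-C(t_n^-)=x(t_n)\wedge\big(\dx(t_n^+)-\dx(t_n^-)\big)=2\dR(t_n)\,\big(x(t_n)\wedge\nh_n\big)=0 .
\]
Hence $C$ has matching left and right limits at each $t_n$, and being constant on each interval between impacts it is constant on all of $\RR$. I do not expect any real obstacle here: the only point requiring care is the bookkeeping at the impact times, and that is handled entirely by the two elementary facts that the position vector on a circle centred at the origin is normal to the circle, and that the elastic law of condition $4$ changes the velocity only in the normal direction.
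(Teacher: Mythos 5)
Your proof is correct and follows essentially the same route as the paper's: both arguments reduce to the two facts that $x(t_n)$ is parallel to $\nh_n$ (so its wedge with anything normal vanishes) and that condition $4$ alters the velocity only in the normal direction. One small slip: the coefficient in your displayed jump formula should be $2\dR(t_n)-2\,\dx(t_n^-)\cdot\nh_n$ rather than $2\dR(t_n)$, since condition $4$ reverses the normal component in addition to adding $2\dR(t_n)$; this is harmless, because your conclusion uses only that the jump is parallel to $\nh_n$, not its magnitude.
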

\begin{proof}
It is clear that the angular momentum is constant for $t\in(t_n,t_{n+1})$ for every $n\in\ZZ$. Moreover at the bouncing time it holds
\begin{align*}
    C(t_n^-) &= x(t_n)\wedge \dx(t_n^-)= R(t_n)\nh_n \wedge \dx(t_n^-)=R(t_n)\nh_n \wedge (\dx(t_n^-)\cdot \th_n) \th_n  \\
    & =R(t_n)\nh_n \wedge (\dx(t_n^+)\cdot\th_n) \th_n = R(t_n)\nh_n \wedge \dx(t_n^+) = C(t_n^+)    
\end{align*}
hence the proposition is proved.
\end{proof}
Since the motion is in the plane we have $C(t)=(0,0,c)$. If $c=0$ the motion is along a diameter and never leaves the diameter. Moreover there is a symmetry between motion in the clockwise and in the anticlockwise direction given by changing sign to $c$ (see \eqref{dirich_r}). Without loss of generality in the following we assume $c>0$, which corresponds to anticlockwise motion.

To state the main result, we introduce two classes of functions.
\begin{Def}\label{def_R}
Let $R(t)$ be a $C^2$, strictly positive and $1$-periodic function. We denote by $\|\cdot\|$ the sup-norm of a function and use the notation  
\[
\underline{R} := \min\limits_t\, R(t)\, , \qquad \overline{R}:= \max\limits_t\, R(t)
\]
and 
\[
\sigma := \min \left\{ \frac{\underline{R}}{2\, \| \dR \|}\, ,\, \frac{2\, \sqrt{1+\sqrt{1-\eps^2}}\, \underline{R}}{\sqrt{\| \frac{d^2}{dt^2} R^2 \|}} \right\}
\]
with $\eps\in(0,1)$ a fixed parameter.

We say that $R(t)$ belongs to the class $\RB$ if $\sigma>2$.

We say that $R(t)$ belongs to the class $\CB$ if:
\begin{itemize}
\item[(i)] $\sigma>4$;
\end{itemize}
and there exists $\bar{t}\in [0,1)$ such that $(\ddR(\bar{t})\underline{R}+\| \dR\| \overline{R})<0$ and
\begin{itemize}
\item[(ii)] \[
3<1+\sqrt{\frac{2\overline{R}^2}{-\ddR(\bar{t})\underline{R}-\| \dR\| \overline{R}}}<-1+\sqrt{\frac{2\underline{R}^2}{\frac{2\overline{R}^2}{\sigma^2}+\| \dR\| \overline{R}}}\, ;
\]
\item[(iii)]
\[
\dR(\bar{t})=0 \quad\mbox{and}\quad \ddR(\bar{t})<-\frac{2\overline{R}^2}{\sigma^2\underline{R}}\, . 
\]
\end{itemize}
\end{Def}  

For simplicity we drop the dependence on $\eps$ in the notations for $\sigma$ and the classes $\RB$ and $\CB$. Moreover the parameter $\eps$ is to be considered fixed in $(0,1)$ for the rest of the paper, there isn't a more interesting value for it. Also note that clearly $\CB\subset\RB$, moreover the classes $\RB$ and $\CB$ are non empty as shown in the following Proposition, whose technical proof is in Appendix \ref{technical}. 

\begin{Pro}\label{R_not_empty}
For a fixed $\eps\in (0,1)$ let $\alpha:=\sqrt{1+\sqrt{1-\eps^2}}$ and
\[
\bar k:= \frac{\alpha^2+\sqrt{2\alpha^4-1}}{\alpha^2-1}\, .
\]
For every integer $k>\bar k$ and $\delta>0$ such that
\[
\frac{1}{4\pi^2(k^2+1)}<\delta<\frac{1}{2\pi(k+1)}
\]
there exists $M>0$ such that the function
\[
R_{k,\delta,M}(t) := M +\delta\sin\left(2\pi k t\right) +\delta\sin(2\pi t) 
\]
belongs to the class $\CB$.

Moreover, if $\eps<\sqrt{1-\frac{1}{(\pi-1)^2}}$, then there exist $\delta,M$ such that 
\[
R_{\delta,M}(t) := M +\delta\sin(2\pi t)
\]
is in $\CB$.
\end{Pro}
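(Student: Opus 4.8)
The plan is to verify the defining inequalities of the class $\CB$ for the explicit family $R_{k,\delta,M}$ (and then for $R_{\delta,M}$), treating $M$ as a large parameter that can be chosen last. First I would record the elementary asymptotics: $\underline{R} = M + O(\delta)$, $\overline{R} = M + O(\delta)$, so $\underline{R}/\overline{R}\to 1$ as $M\to\infty$, while $\|\dR\|$ and $\|\tfrac{d^2}{dt^2}R^2\|$ must be computed carefully because $R^2$ contains the cross term $2\delta M(\sin(2\pi kt)+\sin(2\pi t))$ whose second derivative is of size $M\delta$. Specifically $\tfrac{d}{dt}R^2 = 2R\dR$ and $\tfrac{d^2}{dt^2}R^2 = 2\dR^2 + 2R\ddR$, so $\|\tfrac{d^2}{dt^2}R^2\|\le 2\|\dR\|^2 + 2\overline{R}\|\ddR\|$; since $\|\dR\| = O(\delta)$ and $\|\ddR\| = O(\delta)$ this is $O(M\delta)$. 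Hence in the definition of $\sigma$ the second term behaves like $2\sqrt{1+\sqrt{1-\eps^2}}\,\underline{R}/\sqrt{O(M\delta)} = O(\sqrt{M/\delta})\cdot 2\alpha$, and the first term $\underline{R}/(2\|\dR\|) = O(M/\delta)$; so for $M$ large the minimum is the second term and $\sigma \sim 2\alpha\sqrt{\underline{R}}/\sqrt{\|\tfrac{d^2}{dt^2}R^2\|}$. The point of the constraint $k>\bar k$ is precisely to make this asymptotic value of $\sigma$ exceed $4$ and, more importantly, to make condition (ii) solvable.

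Next I would pick $\bar t$. Condition (iii) forces $\dR(\bar t)=0$ together with $\ddR(\bar t)$ very negative; the natural choice is a point where $\sin(2\pi k\bar t)$ is near its minimum $-1$ and $\sin(2\pi\bar t)$ contributes a small correction, chosen so that $\dR(\bar t)=\delta(2\pi k\cos(2\pi k\bar t) + 2\pi\cos(2\pi\bar t))$ vanishes exactly. One shows by an intermediate-value / implicit-function argument that near $\bar t_0 = \tfrac{3}{4k}$ (where $k t \equiv \tfrac34 \bmod 1$, giving $\cos(2\pi k\bar t)=0$, $\sin(2\pi k\bar t)=-1$) there is a genuine zero $\bar t$ of $\dR$ with $\ddR(\bar t) = \delta(-(2\pi k)^2\sin(2\pi k\bar t) - (2\pi)^2\sin(2\pi\bar t)) \approx \delta(2\pi k)^2 \cdot(- (-1)) $ — wait, sign: $\ddR(\bar t)\approx -\delta(2\pi k)^2\sin(2\pi k\bar t)$, and near $\sin(2\pi k\bar t)=-1$ this is $\approx +\delta(2\pi k)^2>0$, which is the wrong sign. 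So instead one takes $\bar t$ near where $\sin(2\pi k\bar t)=+1$, i.e. $k\bar t\equiv \tfrac14$, giving $\ddR(\bar t)\approx -\delta(2\pi k)^2 < 0$, which is what (iii) wants; the correction term $-\delta(2\pi)^2\sin(2\pi\bar t)$ and the exact root of $\dR$ are handled perturbatively in $1/k$. Then one checks $\ddR(\bar t) < -2\overline{R}^2/(\sigma^2\underline{R})$: the left side is $\approx -\delta(2\pi k)^2$ while the right side is $\approx -2\overline{R}/\sigma^2 \approx -\|\tfrac{d^2}{dt^2}R^2\|/(2\alpha^2) = O(M\delta/\alpha^2)$; comparing $\delta k^2$ with $M\delta$ we need $k^2 \gtrsim M/\alpha^2$, which is \emph{false} for large $M$ — so this comparison must be read the other way: actually for (iii) we want the more-negative (smaller) quantity on the left, i.e. we need $-\delta(2\pi k)^2 < -2\overline R^2/(\sigma^2\underline R)$, i.e. $\delta(2\pi k)^2 > 2\overline R^2/(\sigma^2\underline R)$. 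Since $\sigma^2 \approx 4\alpha^2\underline R/\|\tfrac{d^2}{dt^2}R^2\| \approx 4\alpha^2\underline R/(2\overline R\|\ddR\|)$, the right side is $\approx \overline R\|\ddR\|/(2\alpha^2) \approx \delta(2\pi k)^2\overline R/(2\alpha^2 M)\cdot M$—carrying the constants through, this reduces to an inequality involving only $k$, $\alpha$ and the ratio $\underline R/\overline R\to1$, which is where $k>\bar k$ and the precise formula for $\bar k$ come from. The bookkeeping here is the main obstacle: every quantity is $O(\delta)$ or $O(M\delta)$, the $M$-dependence must cancel in the right places, and one has to keep the $\eps$-dependent constant $\alpha$ tracked exactly, not just asymptotically, to land inside the open interval in (ii).

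For condition (ii), I would substitute the asymptotic values: the left bound $1+\sqrt{2\overline R^2/(-\ddR(\bar t)\underline R - \|\dR\|\overline R)}$ has $-\ddR(\bar t)\underline R \approx \delta(2\pi k)^2 M$ dominating $\|\dR\|\overline R = O(\delta M)$, so the quantity under the root is $\approx 2M^2/(\delta(2\pi k)^2 M) = 2M/(\delta(2\pi k)^2) \to\infty$; hmm, that makes the left side blow up, so again the finite comparison is delicate and forces a relation between $\delta$ and $k$ — this is exactly the role of the sandwich $\tfrac{1}{4\pi^2(k^2+1)}<\delta<\tfrac{1}{2\pi(k+1)}$, which pins $\delta(2\pi k)^2$ into a bounded range (roughly between $k^2/(k^2+1)\approx1$ and $2\pi k/(k+1)\cdot k\to\infty$ — so actually $\delta$ near the lower end makes $\delta(2\pi k)^2\approx1$). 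Choosing $\delta$ appropriately inside this window, the left bound of (ii) tends to a constant depending on $k$ and $\alpha$, the right bound $-1+\sqrt{2\underline R^2/(2\overline R^2/\sigma^2 + \|\dR\|\overline R)}$ likewise tends to a constant (using $\sigma^2\to 4\alpha^2\underline R/\|\tfrac{d^2}{dt^2}R^2\|$ and $\|\tfrac{d^2}{dt^2}R^2\|\approx 2\overline R\|\ddR\|$), and the required strict inequalities $3 < (\text{left}) < (\text{right})$ become explicit inequalities in $k,\alpha,\eps$ that hold precisely when $k>\bar k$. Finally I would fix such a $k$ and $\delta$, then choose $M$ large enough that all the $O(1/M)$ error terms in the above estimates are smaller than the gaps in the strict inequalities, and that $\sigma>4$ holds; this gives $R_{k,\delta,M}\in\CB$. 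The single-frequency case $R_{\delta,M}(t)=M+\delta\sin(2\pi t)$ is the same computation with $k=1$: here $\dR(\bar t)=0$ forces $\bar t\in\{1/4,3/4\}$, one takes $\bar t=1/4$ so $\ddR(\bar t)=-\delta(2\pi)^2<0$ and $\sin(2\pi\bar t)=1$, and conditions (i)--(iii) reduce to inequalities in $\delta$ and the ratio $\underline R/\overline R$; the hypothesis $\eps<\sqrt{1-1/(\pi-1)^2}$ is exactly what is needed for the interval in (ii) to be non-degenerate at $k=1$, and again $M\to\infty$ absorbs the errors. The main obstacle throughout is not conceptual but the careful asymptotic bookkeeping: isolating which term realizes the minimum in $\sigma$, tracking the competing scales $\delta$, $\delta k^2$, $M$, $M\delta$, and verifying the $\eps$-dependent strict inequalities with enough room to later send $M\to\infty$.
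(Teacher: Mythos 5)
Your plan is essentially the paper's proof: compute $\underline R,\overline R,\|\dR\|,\|\tfrac{d^2}{dt^2}R^2\|$ explicitly, note that for $M$ large the minimum defining $\sigma$ is attained by its second term so $\sigma\sim 2\alpha\underline R/\sqrt{\|\tfrac{d^2}{dt^2}R^2\|}$ (hence (i) holds trivially as $M\to\infty$), pick $\bar t$ with $\dR(\bar t)=0$ and $\ddR(\bar t)\approx-4\pi^2\delta(k^2+1)$ so that (iii) reduces in the limit to $\alpha^2>1$, and reduce (ii) to a $\delta$-free inequality in $k,\alpha$ which is exactly where $k>\bar k$ (and, for $k=1$, $\eps<\sqrt{1-1/(\pi-1)^2}$) enters. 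One step as written would stall and needs the obvious repair: the two bounds in condition (ii) do \emph{not} ``tend to constants'' --- as you yourself first observed, both diverge, like $C^{\mp}\sqrt{M/\delta}$ with
\[
C^-=\left(\frac{2}{4\pi^2(k^2+1)-2\pi(k+1)}\right)^{1/2},\qquad C^+=\left(\frac{2}{4\pi^2(k^2+1)\alpha^{-2}+2\pi(k+1)}\right)^{1/2},
\]
and the middle inequality of (ii) survives the limit iff $C^-<C^+$, i.e. $\pi(k^2+1)(1-\alpha^{-2})>k+1$, which is the paper's condition \eqref{serve_eps}; since this is independent of $\delta$, no ``appropriate choice'' of $\delta$ is made (nor is one allowed: the statement quantifies over every $\delta$ in the window, with only $M$ at your disposal). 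The other genuine divergence from the paper is your $\bar t$: you locate it perturbatively near $1/(4k)$, whereas the paper takes the exact point $\bar t=1/4$ where both sines equal $1$ simultaneously (which, strictly speaking, requires $k\equiv 1 \pmod 4$); your version works for all $k$ at the harmless cost of getting $-\ddR(\bar t)=4\pi^2\delta(k^2+O(1))$ instead of exactly $4\pi^2\delta(k^2+1)$, and since every decisive inequality is strict and checked in the limit $M\to\infty$, this changes nothing. With these two points settled, the bookkeeping you describe is precisely what the paper carries out.
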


\begin{Rem} \label{rem:choice-R}
It is interesting for the conclusions of Theorem \ref{Main1} and for Remark \ref{rem2} to notice that the functions $R_{k,\delta,M}(t)$ and $R_{\delta,M}(t)$ are 1-periodic and actually 1 is their least period. 
\end{Rem}

\begin{Rem} \label{rem:epsilon}
Here we add some comments on the parameter $\eps$. Let us first note that if $\eps=1$ there are problems in the proof of Proposition \ref{R_not_empty}, in particular condition \eqref{serve_eps} does not hold since we would have $\alpha=1$. More in general, for $\eps=1$, it is not clear if the class $\CB$ is not empty. From condition $(iii)$ and the definition of $\sigma$,
\[
\ddR(\bar{t})<-\frac{2\overline{R}^2}{\sigma^2\underline{R}}\leq
-\frac{\overline{R}^2}{2\underline{R}^3}\left\| \frac{d^2}{dt^2} R^2 \right\|\, .
\]
But
\[
\left\| \frac{d^2}{dt^2} R^2 \right\| \ge \left| \frac{d^2}{dt^2} R^2 \Big|_{t=\bar t} \right| = 2 \overline{R}\, |\ddR(\bar{t})|
\]
if $R(\bar t)=\overline{R}$, so that
\[
\ddR(\bar{t})<- \frac{\overline{R}^3}{\underline{R}^3}\, |\ddR(\bar{t})|
\]
that is impossible.
\end{Rem}

We now begin to state our main results. The first concerns the existence of regular motion for the billiard dynamics inside $\DD_t$. 

\begin{The}\label{Main1}
Suppose that  $c\in \left(0,\eps\, \frac{\underline{R}^2}{\sigma}\right)$ and let $R(t)\in\RB$. Then, for every $1<\omega<\sigma-1$ 
there exists a family of bouncing solutions
    \[
  \left\{x_\xi(t)\right\}_{\xi\in\RR}
  \]
  such that for every $\xi\in\RR$
\begin{align}\label{contheo} 
& x_{\xi+1}(t) = x_{\xi}(t-1),   \\ 
\label{contheo2}
& x_{\xi}(t)=x_{\xi+\omega}(t).
\end{align}
Moreover, the sequence of impact times $(t_n)_{n\in\ZZ}$ satisfies
\[
\lim_{n\to\infty}\frac{t_n}{n} =\omega
\]
and, if $\omega=p/q\in\QQ$, then $t_{n+q}=t_n+p$ for every $n\in\ZZ$. 
\end{The}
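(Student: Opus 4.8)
The plan is to realize $x_\xi(t)$ via Aubry--Mather theory applied to the reduced billiard map on the cylinder. First I would observe that the angular momentum $c$ is conserved (the Proposition already proven), so the four-dimensional dynamics reduces to a two-dimensional map $\Phi$ recording, at each impact, a pair $(t_n, v_n)$ consisting of the impact time and a suitable momentum/velocity coordinate; since the boundary is $1$-periodic in time, $(t,v)\mapsto(t+1,v)$ commutes with $\Phi$, so $\Phi$ descends to a map of the cylinder $\TT\times\RR$. The next step is to verify that $\Phi$ is an exact symplectic twist map on the relevant region of the cylinder (the region corresponding to $c$ in the stated range and to impact-time increments of order $\omega\in(1,\sigma-1)$): exactness comes from the existence of a generating function, and the twist condition amounts to monotonicity of the next impact time as a function of the outgoing velocity, which for small enough $c$ and inside the prescribed window should follow from the estimates encoded in the definition of $\sigma$ and the hypothesis $R\in\RB$ (i.e. $\sigma>2$). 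Here I would need the generating function $h(t_n,t_{n+1})$ — namely the Lagrangian action of the Dirichlet solution between consecutive impacts — whose construction is carried out in Sections \ref{sec:dir} and \ref{sec:genfun}; the twist sign and the fact that the flux (rotation) can be made to take any value in $(1,\sigma-1)$ are the content of the a priori bounds on inter-impact times.

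Once $\Phi$ is known to be an exact symplectic monotone twist map of the cylinder on an invariant (or essentially invariant) strip, I would invoke the Aubry--Mather theorem (collected in Appendix \ref{sec:am}): for every rotation number $\omega$ in the twist interval there is a Mather set, hence an orbit $(t_n^\xi, v_n^\xi)_{n\in\ZZ}$ with $\lim_{n\to\infty} t_n/n=\omega$, ordered on the cylinder and parametrized by $\xi\in\RR$ through the Mather order. Translating such an orbit back through the Dirichlet problem of Section \ref{sec:dir} produces the bouncing solution $x_\xi(t)$: between $t_n^\xi$ and $t_{n+1}^\xi$ one takes the straight-line free motion interpolating the boundary points, and the elastic reflection law at impacts is exactly the Euler--Lagrange/matching condition satisfied by action-minimizing (more generally, stationary) configurations of $h$. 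The translation-in-time symmetry \eqref{contheo} follows because shifting the Mather-labelling parameter $\xi$ by $1$ corresponds to shifting the whole configuration of impact times by one period, which by $1$-periodicity of $R$ is the same bouncing solution run with $t\mapsto t-1$; the relation \eqref{contheo2} follows from the defining property of the rotation-$\omega$ Mather set together with the labelling convention $x_{\xi+\omega}=x_\xi$ built into Aubry--Mather orbits. Finally, if $\omega=p/q$, the Mather set contains periodic orbits with $t_{n+q}=t_n+p$, which gives the stated periodicity.

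The main obstacle I anticipate is the verification of the twist condition together with the control of the attainable rotation numbers: one must show that, uniformly in the region $\{c\in(0,\eps\underline{R}^2/\sigma)\}$, the map is globally twist (not merely locally), that orbits with the prescribed rotation number genuinely stay in a region where this structure is valid, and that the inter-impact time estimates give exactly the window $(1,\sigma-1)$ — this is where the somewhat baroque definition of $\sigma$ (both pieces of the minimum) and the constraint on $c$ get used. A secondary technical point is ensuring the Dirichlet problem between two impact times is well-posed and has a solution staying strictly inside $\DD_t$ (condition 2 in the definition of bouncing solution), i.e. that the interpolating chord does not exit the breathing disk; this requires the time increments $t_{n+1}-t_n$ not to be too large, which again is guaranteed by the upper bound $\omega<\sigma-1$ through the generating-function estimates. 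Once these a priori bounds are in place, the passage from Mather orbits to genuine bouncing solutions and the two symmetry relations are essentially formal.
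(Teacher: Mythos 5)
Your plan follows essentially the same route as the paper: reduce via conservation of angular momentum to a cylinder map generated by the Lagrangian action of the Dirichlet solution between impacts, check the exact symplectic and twist properties of that generating function, invoke Aubry--Mather theory to produce minimal orbits of every rotation number $\omega\in(1,\sigma-1)$, and translate these back into bouncing solutions, reading \eqref{contheo} off the $1$-periodicity of $R$ and \eqref{contheo2} off the Mather parametrization (this is Corollary \ref{coro-subito} combined with Proposition \ref{prop-finale-twist}).

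The one step you list as an ``anticipated obstacle'' without resolving it is precisely the crux of the argument, so let me record how the paper closes it. The twist structure is only available on the strip $\Omega=\{0<t_1-t_0<\sigma\}$, which is not invariant under the billiard map, whereas Theorem \ref{Mather} requires a generating function defined on all of $\RR^2$ (equivalently, a map of the whole cylinder preserving and twisting each end infinitely). The resolution is not to prove the billiard map is ``globally twist'': one fixes $\beta<\min\{\omega-1,\sigma-\omega-1\}$, restricts to $\Omega_\beta=\{\beta\le t_1-t_0\le\sigma-\beta\}$ where $h_{12}\le\delta<0$ by compactness, extends $h$ to a generating function $\tilde h$ on all of $\RR^2$ with $\tilde h_{12}\le\delta<0$ (Lemma \ref{extension2}), and applies Mather's theorem to the extended map $\tilde P$. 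One then uses the a priori estimate \eqref{stim-omega} for minimal orbits, $|t_{n+1}-t_n-\omega|\le 1$, to get $\beta<\omega-1\le t_{n+1}-t_n\le\omega+1<\sigma-\beta$, so the minimal orbits lie where $\tilde P$ coincides with the billiard map. This is exactly where the hypotheses $\omega>1$ and $\omega<\sigma-1$ enter. With that step supplied, the remainder of your outline (well-posedness of the Dirichlet problem, equivalence of the stationarity condition with the elastic reflection law, and the two symmetry relations) matches the paper's proof.
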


\begin{Rem}\label{rem2}
If $\omega=p/q\in\mathbb{Q}$, then
  \[
x_{\xi}(t+p)=x_{\xi-p}(t)=x_{\xi-q\frac{p}{q}}(t)=x_{\xi}(t)
  \]
so that the solution makes $q$ bounces in time $p$ before repeating itself. They are said $(p,q)$-{\it periodic}. 
  
If $\omega\in\RR\setminus\mathbb{Q}$, solutions satisfying \eqref{contheo}-\eqref{contheo2} can be seen as generalised quasi-periodic. Actually, consider the function
  \[
  \Phi_\xi(a,b)= x_{\xi-\omega b+a}(a).
  \]
  This function is doubly-periodic in the sense that
  \begin{align*}
    \Phi_\xi(a+1,b)&= x_{\xi-\omega b+a+1}(a+1)=\Phi_\xi(a,b), \\
    \Phi_\xi(a,b+1)&=x_{\xi-\omega b+a-\omega}(a) =\Phi_\xi(a,b)
  \end{align*}
  and
  $\Phi_\xi(t, t/\omega)=x_{\xi}(t)$. 
  If the function $\xi\mapsto\Phi_\xi$ is continuous, then these solutions are classical quasi-periodic solutions with frequencies $(1,1/\omega)$ in the sense of \cite{SM} (see also \cite{Ortega2}). We will not guarantee the continuity, however, the function $\xi\mapsto\Phi_\xi$ will have at most jump discontinuities and if $\xi$ is a point of continuity then so are $\xi+\omega,\xi+1$. In this latter case the solutions are known as generalised quasi-periodic. Under the hypothesis of Theorem \ref{Main1} we are not able to distinguish between the two classes of solutions. However, if $R(t)\in C^9$, then the results in \cite{sylvie1} implies the existence of classical quasi-periodic solutions for some $\omega$. On the other hand, as a byproduct of our next result, if $R(t)\in\CB$ then generalised quasi-periodic solutions exist.     \end{Rem}

\begin{Rem} \label{rem:omega}
The restriction $\omega>1$ is not optimal and is due to the technique used in the proof. We cannot guarantee the conjecture that quasi-periodic solutions exist for every $\omega>0$. However, this is not the main purpose of this paper. Actually, we will find chaotic dynamics corresponding (in a sense that will be specified) to $\omega>1$.    
\end{Rem}  
  
Our second main result concerns the existence of chaotic dynamics for the billiard motion in $\DD_t$. A detailed statement is given in Theorem \ref{Main2prime} of Section \ref{sec:chaos}.

\begin{The}\label{Main2}
Suppose that $R(t)\in\CB$. Then the billiard map admits many invariant probability measures with positive metric entropy.     
\end{The}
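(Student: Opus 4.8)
The plan is to reduce the billiard dynamics to a single exact symplectic twist map of the cylinder, put it in the Aubry–Mather framework, and then show that membership of $R$ in $\CB$ forces the non-existence of invariant curves for a suitable interval of irrational rotation numbers, which by a converse-KAM type argument yields invariant measures with positive entropy. First I would exploit the conservation of angular momentum (the Proposition already proved): fixing $c$ in the admissible range $\left(0,\eps\,\frac{\underline R^2}{\sigma}\right)$, the planar billiard reduces to a one-dimensional impact problem for the radial and angular variables, and the Poincaré section at impact times gives a map $f$ on the cylinder $\TT\times I$ in coordinates $(t,\omega)$ (impact time modulo $1$ versus a velocity-like variable linked to the local rotation number). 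By the earlier parts of the paper (Sections \ref{sec:dir}–\ref{sec:genfun}) this map has an explicit generating function $h(t,t')$, obtained as the Lagrangian action of the solution of the Dirichlet problem between consecutive impacts, and one checks $f$ is exact symplectic with $\partial^2_{tt'}h<0$ (the twist condition) on the relevant region; this is what Theorem \ref{Main1} implicitly uses.

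Next I would invoke the standard Aubry–Mather machinery collected in Appendix \ref{sec:am}: for each rotation number $\omega$ in an interval determined by $\sigma$, there exists a minimal (action-minimizing) set, and the key rigidity statement is that if there is an invariant rotational curve of rotation number $\omega\in\RR\setminus\QQ$, then the minimizing configuration with that rotation number is a global minimizer of the action, so in particular the second variation of the action along it is nonnegative. The contrapositive is the criterion I want: if at some point of phase space the second variation of the action (a Jacobi-type quadratic form built from the second derivatives of $h$) is negative, then no invariant curve through that region can carry that rotation number. Using the explicit generating function I would compute the diagonal and off-diagonal entries of the Hessian of $h$ and show, using conditions (i)–(iii) in the definition of $\CB$ — the sign condition $\ddR(\bar t)\underline R+\|\dR\|\overline R<0$ at a point where $\dR(\bar t)=0$, together with the quantitative inequalities (ii) and the strong concavity bound (iii) — that along a short finite configuration segment the second variation is negative. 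The three conditions are exactly calibrated so that (iii) makes the ``local curvature'' term negative, (ii) controls the range of admissible segment lengths (the numbers between $3$ and something, matching a few consecutive impacts), and (i) keeps $\sigma$ large enough that the twist interval for $\omega$ actually contains the relevant rotation numbers; Proposition \ref{R_not_empty} guarantees the class is non-empty, e.g. for $R=M+\delta\sin(2\pi t)$.

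Finally, having excluded invariant curves for a whole interval of irrational $\omega$, I would cite the results of Angenent, Forni and others (\cite{angenent1,angenent2,forni}) — as announced in the introduction — to conclude that in the region swept by these rotation numbers the map $f$ has a positive-topological-entropy invariant set (a Birkhoff instability region / hyperbolic-like horseshoe supported near the Aubry–Mather sets), and then apply the variational principle for entropy to produce an invariant probability measure with positive metric entropy; doing this for each $\omega$ in the interval gives the ``many'' in the statement, and lifting back through the reduction (undoing the angular-momentum fixing) returns genuine bouncing solutions of the original breathing circle billiard. I expect the main obstacle to be the middle step: getting the generating function explicit enough and controlling the error terms in the Dirichlet-problem expansion well enough that the sign of the second variation can be pinned down rigorously from the inequalities defining $\CB$ — in particular verifying that the quadratic form is negative on an honest finite segment (not merely formally to leading order in $\delta/M$) and that this segment length is compatible with the twist interval, which is precisely why the definition of $\CB$ is so intricate.
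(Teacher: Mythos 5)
Your proposal follows essentially the same route as the paper: reduce to an exact symplectic twist map with explicit generating function obtained from the Dirichlet problem, use Mather's second-variation criterion (that $h_{22}+h_{11}>0$ along any orbit on an invariant curve) together with conditions (i)--(iii) in the definition of $\CB$ to exclude invariant curves for an open interval of rotation numbers, and then invoke Forni's theorem to produce the invariant probability measures with positive metric entropy near the resulting Cantor--Mather sets. The only notable difference is that the paper controls the error terms in the limit of small angular momentum $c\to 0^+$ (uniformly in $\omega$) rather than in $\delta/M$, and it verifies via the a priori estimate $|t_{n+1}-t_n-\omega|\le 1$ that the relevant orbits stay in the strip where the extended map coincides with the billiard map, so the measures are genuinely invariant for the billiard dynamics.
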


\section{The Dirichlet problem}\label{sec:dir}

The proofs of our main results rely on Aubry-Mather theory, for which we need to define a generating function for the billiard map associated to the billiard flow in $\DD_t$. The first step is the study of the Dirichlet problem associated to the flow between two consecutive bounces. 

Let us consider two consecutive bounces for the billiard motion in $\DD_t$ at times $t_n$ and $t_{n+1}$, the Dirichlet problem which describes the billiard motion between the two bounces is
\[
  \left\{
  \begin{aligned}
    & \ddot{x}(t) = 0, \quad t\in (t_n,t_{n+1}), \\
    & |x(t)| < R(t), \quad t\in (t_n,t_{n+1}), \\
   & |x(t_n)|=R(t_n), \\
   & |x(t_{n+1})|=R(t_{n+1})
\end{aligned}
\right.
\]
and in polar coordinates $(r, \theta)$ it transforms into
\begin{equation}
    \label{dirich_r}
  \left\{
  \begin{aligned}
    & \ddr = \frac{c^2}{r^3}, \quad t\in (t_n,t_{n+1}), \\
    & r^2\dt =c, \quad t\in (t_n,t_{n+1}), \\
    & r(t) < R(t), \quad t\in (t_n,t_{n+1}), \\
   & r(t_n)=R(t_n), \\
   & r(t_{n+1})=R(t_{n+1})
\end{aligned}
\right.
  \end{equation}
from which it is evident the symmetry with respect to the change of sign of $c$. Bouncing condition 4 in these coordinates reads
  \begin{align}
    \label{tgcondr}
    &\dt(t_n^+) =\dt(t_n^-), \\
    \label{normcondr}
      &\dr(t_n^+) =-\dr(t_n^-) + 2\dR(t_n).
\end{align}

In the following result we find sufficient conditions for a solution of system \eqref{dirich_r} to exist. Note that these solutions do not satisfy in general the bouncing conditions \eqref{tgcondr}-\eqref{normcondr} when glued together.  
  
\begin{Pro}\label{dirich_sol}
Let $\eps \in (0,1)$ be a fixed parameter, and let us fix a value of $c>0$. For all times $t_n,t_{n+1}$ satisfying
\begin{align}\label{hp_dirsol1}
  & 0<t_{n+1}-t_n<\eps\, \frac{\underline{R}^2}{c}, \\
  \label{hp_dirsol2}
  & 0<t_{n+1}-t_n< \frac{\underline{R}}{2\, \| \dR \|}, \\
  \label{hp_dirsol3}
  & 0<t_{n+1}-t_n< \frac{2\, \sqrt{1+\sqrt{1-\eps^2}}\, \underline{R}}{\sqrt{\| \frac{d^2}{dt^2} R^2 \|}}
\end{align}
system \eqref{dirich_r} admits a solution $(r(t;t_n,t_{n+1}),\theta(t;t_n,t_{n+1}))$ such that
\begin{equation} \label{cons-bound}
\dr(t_n^+)< \min\{0,\dR(t_n)\}\, , \qquad \dr(t_{n+1}^-)>\max\{0,2\dR(t_{n+1})\}
\end{equation}
\end{Pro}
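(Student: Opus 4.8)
The plan is to reduce the Dirichlet problem \eqref{dirich_r} to a scalar second-order boundary value problem for the radial variable $r(t)$ alone, solve it by a fixed-point / shooting argument using the smallness of the time step $t_{n+1}-t_n$, and then recover $\theta$ by integrating $\dt = c/r^2$. Once the radial solution is in hand, conditions \eqref{hp_dirsol2}--\eqref{hp_dirsol3} will be used to verify the sign inequalities \eqref{cons-bound} on the boundary velocities. Throughout, the key structural fact is that the effective radial equation $\ddr = c^2/r^3$ is a repulsive (centrifugal) equation, so solutions are convex in $t$; this convexity is what forces $\dr(t_n^+)<0<\dr(t_{n+1}^-)$ as soon as $r$ stays below $R$ in the interior and equals $R$ at the endpoints.

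First I would set up the scalar problem: look for $r:[t_n,t_{n+1}]\to(0,\overline R]$ solving $\ddr = c^2/r^3$ with $r(t_n)=R(t_n)$, $r(t_{n+1})=R(t_{n+1})$, and the strict interior constraint $r(t)<R(t)$. To produce a solution I would use a shooting method: for each initial slope $v$, let $r_v(t)$ be the solution of the ODE with $r_v(t_n)=R(t_n)$, $\dr_v(t_n)=v$, and study the endpoint value $r_v(t_{n+1})$ as a function of $v$. For $v$ slightly less than $\dR(t_n)$ one checks (using the length bound \eqref{hp_dirsol1}, which keeps the trajectory from blowing up or hitting $r=0$ on the time interval, since the relevant a priori bound on $\ddr=c^2/r^3$ needs $r$ bounded below, and $(t_{n+1}-t_n)c/\underline R^2<\eps<1$ controls the angular sweep and hence keeps $r$ comparable to $R$) the map $v\mapsto r_v(t_{n+1})$ is continuous and monotone increasing, with $r_v(t_{n+1})<R(t_{n+1})$ for $v$ sufficiently negative and $r_v(t_{n+1})>R(t_{n+1})$ for $v$ near $0$ or positive; an intermediate value argument then selects the correct $v=\dr(t_n^+)$. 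The interior constraint $r(t)<R(t)$ follows from convexity of $r_v$ together with $R$ being, on such a short interval, well approximated by a concave-from-below comparison: this is exactly where \eqref{hp_dirsol3} enters, bounding $\|\frac{d^2}{dt^2}R^2\|$ so that $R^2(t)$ minus the (convex) quantity $r^2(t)$ stays positive in the interior. I would carry out the comparison at the level of $R^2-r^2$ rather than $R-r$, since $\frac{d^2}{dt^2}(r^2) = 2\dr^2 + 2r\ddr = 2\dr^2 + 2c^2/r^2 > 0$ has a clean sign, and $\frac{d^2}{dt^2}R^2$ is controlled by hypothesis.

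Next, having the radial solution and the value $\dr(t_n^+)=v$, I would prove \eqref{cons-bound}. Convexity of $r$ and $r(t_n)=R(t_n)$, $r(t_{n+1})=R(t_{n+1})$ with $r<R$ inside force $\dr(t_n^+)<\frac{R(t_{n+1})-R(t_n)}{t_{n+1}-t_n}<\dr(t_{n+1}^-)$; the mean-value bound $\left|\frac{R(t_{n+1})-R(t_n)}{t_{n+1}-t_n}\right|\le\|\dR\|$ together with a quantitative use of the convexity gap (of size controlled from below using \eqref{hp_dirsol3}, i.e.\ the interior separation $R-r$ is large enough that the slopes at the two ends differ by more than $2\|\dR\|$) then upgrades this to $\dr(t_n^+)<-\|\dR\|\le\min\{0,\dR(t_n)\}$ and $\dr(t_{n+1}^-)>\|\dR\|\ge\max\{0,2\dR(t_{n+1})\}$, using \eqref{hp_dirsol2} to absorb the factor $2$ in $2\dR(t_{n+1})$ (since $2\|\dR\|(t_{n+1}-t_n)<\underline R$ relates the slope gain over the interval to the boundary amplitude). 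Finally I would integrate $\dt=c/r^2$ on $[t_n,t_{n+1}]$ to define $\theta(t;t_n,t_{n+1})$, which automatically satisfies $r^2\dt=c$, completing the solution of \eqref{dirich_r}.

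The main obstacle I anticipate is the interior constraint $r(t)<R(t)$: the ODE is pushing $r$ convexly upward while $R$ itself may be increasing and bending, so one must show the two curves, pinned together at the endpoints, do not cross in between. The clean way is to work with $g(t):=R^2(t)-r^2(t)$, which vanishes at both endpoints and satisfies $\ddot g = \frac{d^2}{dt^2}R^2 - 2\dr^2 - 2c^2/r^2$; the term $-2\dr^2-2c^2/r^2$ is strictly negative, and \eqref{hp_dirsol3} is calibrated precisely so that $\|\frac{d^2}{dt^2}R^2\|$ cannot overcome it enough to let $g$ become negative on an interval of length $t_{n+1}-t_n$ — a maximum-principle / comparison estimate for $g$, quantifying "how negative $\ddot g$ is" against "how short the interval is," closes this. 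The secondary delicate point is ensuring the shooting function $v\mapsto r_v(t_{n+1})$ is genuinely monotone and that the ODE solution exists on the whole interval without $r$ reaching $0$; both are handled by \eqref{hp_dirsol1}, which keeps $c(t_{n+1}-t_n)/\underline R^2<\eps<1$ small and thereby keeps $r$ trapped in a compact subinterval of $(0,\infty)$ where $c^2/r^3$ is Lipschitz.
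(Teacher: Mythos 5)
Your overall architecture (solve a radial BVP, recover $\theta$ by quadrature, control $R^2-r^2$ by a concavity/maximum-principle argument) matches the paper's, and your treatment of the interior constraint via $g=R^2-r^2$ with $\ddot g=\frac{d^2}{dt^2}R^2-2\dr^2-2c^2/r^2$ is exactly the paper's computation. But there is a genuine gap: the Dirichlet problem here has \emph{two} solutions, and everything in \eqref{cons-bound} hinges on selecting the right one, which your argument does not do. Writing the trajectory as a straight line from $(R_n,0)$ with velocity $(\ell/R_n,c/R_n)$, the endpoint condition $r(t_{n+1})=R_{n+1}$ is a quadratic in $\ell$ with two roots $\ell_\pm$: a ``shallow'' chord staying near the boundary and a ``deep'' chord crossing near the center. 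Correspondingly, your shooting function $v\mapsto r_v(t_{n+1})$ is \emph{not} monotone increasing: $r_v(t_{n+1})^2$ is a convex quadratic in $v$ with minimum $\tau^2c^2/R_n^2$ at $v=-R_n/\tau$, so $r_v(t_{n+1})\to+\infty$ as $v\to-\infty$, contradicting your claimed endpoint behavior. An intermediate-value argument run from ``$v$ slightly less than $\dR(t_n)$'' toward positive $v$ lands on the increasing branch and produces the shallow root $\ell_+$, for which $\dr(t_n^+)\approx\frac{R_{n+1}-R_n}{\tau}$ and the first inequality in \eqref{cons-bound} generically \emph{fails}. The paper must explicitly discard $\ell_+$ and keep $\ell_-$, for which $\dr(t_n^+)=\frac{1}{\tau}\bigl(-R_n-\sqrt{R_{n+1}^2-\tau^2c^2/R_n^2}\,\bigr)\approx-\frac{R_n+R_{n+1}}{\tau}$, and then \eqref{hp_dirsol2} gives $\dr(t_n^+)<-2\|\dR\|$.

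The same selection problem undermines two further steps. First, your mechanism for upgrading $\dr(t_n^+)<\frac{R_{n+1}-R_n}{\tau}$ to $\dr(t_n^+)<-\|\dR\|$ via a ``convexity gap controlled from below using \eqref{hp_dirsol3}'' does not work: the gap is $\dr(t_{n+1}^-)-\dr(t_n^+)=\int_{t_n}^{t_{n+1}}c^2/r^3\,dt$, which is large only when the chord passes near the origin (the deep root) and is $O(c^2)$ for the shallow root; \eqref{hp_dirsol3} provides no lower bound on it. The strong negativity of $\dr(t_n^+)$ is a consequence of the geometry of the deep chord, not of convexity. Second, your maximum-principle argument needs the quantitative lower bound $2A=2(\dr^2+c^2/r^2)\ge\frac{4(1+\sqrt{1-\eps^2})\underline{R}^2}{\tau^2}$ to beat $\|\frac{d^2}{dt^2}R^2\|$ via \eqref{hp_dirsol3}; this bound holds for $\ell_-$ (where $A\tau^2=R_n^2+R_{n+1}^2+2\sqrt{R_n^2R_{n+1}^2-c^2\tau^2}$) but fails for $\ell_+$. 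Finally, note that the shooting/fixed-point machinery is unnecessary here: $\ddr=c^2/r^3$ integrates explicitly (the orbit is a straight line), so the clean route — and the paper's — is to write the solution in closed form, exhibit both roots, and verify all inequalities on the explicitly chosen one.
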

 
\begin{proof}
  Define $R_n=R(t_n)$, $R_{n+1}=R(t_{n+1})$ and $\tau_n = t_{n+1}-t_n$.
  
The equation $\ddr = c^2/r^3$ has the first integral
\begin{equation} \label{energy}
A =2E = \dr^2 + \frac{c^2}{r^2} >0
\end{equation}
where $E$ is the energy of the system, and can be integrated giving the general solution
\begin{equation} \label{def_r}
r(t) = \sqrt{\frac{c^2+A^2(t+B)^2}{A}}
\end{equation}
for $B\in\RR$. System \eqref{dirich_r} is rotationally invariant so that we can fix $\theta(t_n)=0$ without loss of generality. With this assumption the solution \eqref{def_r} represents a straight trajectory in the billiard table starting from the point $(R_n,0)$ with velocity $\dx(t_n)$ satisfying
\begin{equation} \label{vel-AB}
\dx(t_n) = \begin{pmatrix}
\frac{A(t_n+B)}{R_n}\\[0.2cm] \frac{c}{R_n}
\end{pmatrix} \quad \text{and}\quad  |\dx(t_n)|^2 = 2E = A.
\end{equation}
Moreover notice that by \eqref{def_r}
\begin{equation}\label{vel-rad-AB}
\dr(t) =\frac{A(t+B)}{r(t)}.
\end{equation}

Let $\ell := A(t_n+B)$, then the straight trajectory \eqref{def_r} is parametrised by
\[
[0,+\infty) \ni s \, \mapsto \, \begin{pmatrix} R_n + s\, \frac{\ell}{R_n}\\[0.2cm] s\, \frac{c}{R_n} \end{pmatrix}
\]
and the condition $r(t_{n+1})=R_{n+1}$ is then equivalent to
\[
\Big(R_n + \tau_n\, \frac{\ell}{R_n}\Big)^2 + \tau_n^2\, \frac{c^2}{R_n^2} = R_{n+1}^2.
\]
We then obtain
\begin{equation}\label{sonoinr1}
\ell_\pm = \frac{R_n}{\tau_n} \, \left(-R_n \pm \sqrt{R_{n+1}^2 -\tau_n^2\, \frac{c^2}{R_n^2}} \right)
\end{equation}
which is well defined thanks to \eqref{hp_dirsol1}. Using \eqref{vel-AB} it also holds
\begin{equation}\label{AB-l}
A_\pm = \frac{\ell_\pm^2+c^2}{R_n^2}\, , \quad B_\pm=\frac{R_n^2\, \ell_\pm}{\ell_\pm^2+c^2}
\end{equation}
which in \eqref{def_r} gives $r(t_n)=R_n$. We still have to verify conditions \eqref{cons-bound} and $r(t)<R(t)$ for all $t\in (t_n,t_{n+1})$.

First we consider the case $R_{n+1}\ge R_n$. From \eqref{vel-rad-AB} for the first condition in \eqref{cons-bound} to hold we need
\[
\dr(t_n^+) = \frac{\ell_\pm}{R_n} < 0.
\]
The solution $\ell_-$ is always admissible, but solution $\ell_+$ is admissible only for
\[
\tau_n > \frac{R_n}{c}\, \sqrt{R_{n+1}^2-R_n^2}.
\]
Hence we exclude $\ell_+$. If $\dR(t_n)\ge 0$, the choice $\ell_-$ implies that the first condition in \eqref{cons-bound} is satisfied. If $\dR(t_n)<0$ we need to check that
\[
\dr(t_n^+) = \frac{\ell_-}{R_n} < \dR(t_n).
\]
We have
\[
\ell_- < R_n\, \dR(t_n) \quad \Leftrightarrow \quad  \sqrt{R_{n+1}^2 -\tau_n^2\, \frac{c^2}{R_n^2}} > - R_n-  \dR(t_n)\tau_n
\]
which is satisfied by \eqref{hp_dirsol2} since
\[
\tau_n <  \frac{\underline{R}}{2\, \| \dR \|} \le \frac{R_n}{2\, |\dR(t_n)|} < \frac{R_n}{|\dR(t_n)|}
\]
implies $(-\dR(t_n)\tau_n - R_n) <0$. The first condition in \eqref{cons-bound} is then satisfied.

Let us now consider the second condition in \eqref{cons-bound}. From \eqref{vel-rad-AB} first we need to impose
\[
\dr(t_{n+1}^-) = \frac{A(t_{n+1}+B)}{R_{n+1}}  = \frac{A\tau_n +\ell_-}{R_{n+1}}>0.
\]
By \eqref{AB-l} we have
\[
A\tau_n +\ell_- = \frac{\ell_-^2+c^2}{R_n^2}\, \tau_n +\ell_- =
\]
\[
= \frac{1}{\tau_n} \left[ \left(R_n +\sqrt{R_{n+1}^2 -\tau_n^2\, \frac{c^2}{R_n^2}}\right)^2 + \tau_n^2\, \frac{c^2}{R_n^2} - R_n\, \left(R_n +\sqrt{R_{n+1}^2 -\tau_n^2\, \frac{c^2}{R_n^2}}\right)\right] =
\]
\[
= \frac{1}{\tau_n} \left( R_{n+1}^2 +R_n\, \sqrt{R_{n+1}^2 -\tau_n^2\, \frac{c^2}{R_n^2}}\right) >0\, .
\]
If $\dR(t_{n+1})<0$ we are done, on the contrary we need to verify
\[
\dr(t_{n+1}^-) = \frac{A(t_{n+1}+B)}{R_{n+1}}  = \frac{A\tau_n +\ell_-}{R_{n+1}}> 2\dR(t_{n+1}).
\]
As before this is equivalent to
\[
\frac{1}{\tau_n} \left( R_{n+1}^2 +R_n\, \sqrt{R_{n+1}^2 -\tau_n^2\, \frac{c^2}{R_n^2}}\right) > 2\, R_{n+1}\, \dR(t_{n+1}).
\]
By \eqref{hp_dirsol2} we have $\frac{R_{n+1}^2}{\tau_n} > 2 R_{n+1} \dR(t_{n+1})$, hence we are done also with the second condition in \eqref{cons-bound} since
\[
\frac{1}{\tau_n} \left( R_{n+1}^2 +R_n\, \sqrt{R_{n+1}^2 -\tau_n^2\, \frac{c^2}{R_n^2}}\right) > \frac{R_{n+1}^2}{\tau_n}. 
\]
It remains to show that $r(t)<R(t)$ for all $t\in (t_{n},t_{n+1})$. From \eqref{cons-bound} it follows that
\[
\frac{d}{dt} \Big(R^2(t)-r^2(t)\Big)|_{t=t_n^+} > 0\, , \quad \frac{d}{dt} \Big(R^2(t)-r^2(t)\Big)|_{t=t_{n+1}^-} < 0. 
\]
Moreover from \eqref{vel-rad-AB} and \eqref{AB-l}

\[
\frac{d^2}{dt^2} \Big(R^2(t)-r^2(t)\Big) = \Big(\frac{d^2}{dt^2} R^2(t) \Big) - 2A =
\]
\[
= \Big(\frac{d^2}{dt^2} R^2(t) \Big) - \frac{2}{\tau_n^2}\, \left( R_n^2+R_{n+1}^2 +2R_n\, \sqrt{R_{n+1}^2 -\tau_n^2\, \frac{c^2}{R_n^2}}\right) <
\]
\[
< \left\| \frac{d^2}{dt^2} R^2 \right\| -  \frac{2}{\tau_n^2}\, \left( 2\, \underline{R}^2 + 2\underline{R} \, \sqrt{\underline{R}^2(1-\eps^2)}\right) 
\]
where in the last inequality we have used \eqref{hp_dirsol1}. It follows that
\[
\frac{d^2}{dt^2} \Big(R^2(t)-r^2(t)\Big) < \left\| \frac{d^2}{dt^2} R^2 \right\| - \frac{4(1+\sqrt{1-\eps^2})\, \underline{R}^2}{\tau_n^2} <0
\]
by \eqref{hp_dirsol3}. Hence, since $r^2(t_n)=R^2(t_n)$ and $r^2(t_{n+1})=R^2(t_{n+1})$, we have that $R^2(t)>r^2(t)$ for all $t\in (t_n,t_{n+1})$. This concludes the proof of the proposition in the case $R_{n+1}\ge R_n$, for which the solution is given by \eqref{def_r} with $A,B$ as in \eqref{AB-l} with $\ell=\ell_-$ as defined in \eqref{sonoinr1}.

The case $R_{n+1}<R_n$ is similar. The condition $\dr(t_n^+)<0$ in this case is verified for both $\ell_\pm$ for all $\tau_n$ verifying \eqref{hp_dirsol1}. But if $\dR(t_n)<0$ we also need to verify $\dr(t_n^+)<\dR(t_n)$ which is equivalent to
\[
\ell_\pm < R_n\, \dR(t_n) \quad \Leftrightarrow \quad \pm \sqrt{R_{n+1}^2 -\tau_n^2\, \frac{c^2}{R_n^2}} < R_n + \dR(t_n)\tau_n.
\]
It is clear that for $\ell=\ell_+$ more conditions on $\tau_n$ may be necessary. On the contrary for $\ell=\ell_-$ all the arguments used in the case $R_{n+1}\ge R_n$ can be repeated, and as above the proposition is proved with the solution given by \eqref{def_r} with $A,B$ as in \eqref{AB-l} with $\ell=\ell_-$ as defined in \eqref{sonoinr1}.

In conclusion, we have proved that for all times $t_n,t_{n+1}$ satisfying \eqref{hp_dirsol1}-\eqref{hp_dirsol3}, the functions
\begin{equation}\label{def-r-finale}
\begin{aligned}
& r(t;t_n,t_{n+1}) := \sqrt{\frac{A^2(t_n,t_{n+1})\, (t+B(t_n,t_{n+1}))^2+c^2}{A(t_n,t_{n+1})}},\\[0.2cm]
& \theta(t;t_n,t_{n+1}) := \theta(t_n) + \int_{t_n}^{t_{n+1}}\, \frac{c}{r^2(t;t_n,t_{n+1})}\, dt
\end{aligned}
\end{equation}
where $\theta(t_n)$ is the angular variable of the bounce point at time $t_n$, are a solution to system \eqref{dirich_r} satisfying also the conditions \eqref{cons-bound} by letting
\begin{equation}\label{def-AB-finale}
\begin{aligned}
& A(t_n,t_{n+1}) = \frac{R^2(t_n)+R^2(t_{n+1}) + 2\sqrt{R^2(t_n)R^2(t_{n+1})-c^2(t_{n+1}-t_n)^2}}{(t_{n+1}-t_n)^2},\\[0.2cm]
& B(t_n,t_{n+1}) = - \, \left(t_n + \frac{R^2(t_n)+ \sqrt{R^2(t_n)R^2(t_{n+1})-c^2(t_{n+1}-t_n)^2}}{ (t_{n+1}-t_n)\, A(t_n,t_{n+1})}\right).
\end{aligned}
\end{equation}
\end{proof}

\begin{Rem}\label{rem-unire}
Given times $t_n, t_{n+1}$ and $t_{n+2}$ such that the intervals $(t_{n+1}-t_n)$ and $(t_{n+2}-t_{n+1})$ satisfy assumptions \eqref{hp_dirsol1}-\eqref{hp_dirsol3}, the solutions $r(t;t_n,t_{n+1})$ and $r(t;t_{n+1},t_{n+2})$ given in \eqref{def-r-finale} are compatible with the bouncing condition \eqref{normcondr}, in the sense that if $r(t;t_n,t_{n+1})$ satisfies \eqref{cons-bound} at $t=t_{n+1}$ then
\[
\dr(t_{n+1}^+;t_{n+1},t_{n+2}) := - \dr(t_{n+1}^-;t_n,t_{n+1}) + 2\dR(t_{n+1}) < \min\{0,\dR(t_{n+1})\}
\]
that is also $r(t;t_{n+1},t_{n+2})$ satisfies \eqref{cons-bound} at $t=t_{n+1}$.
\end{Rem}

\begin{Rem}\label{rem-angolo}
Given $\theta(t_n)$ and $\theta(t_{n+1})$ the angular variables of two consecutive bounces at times $t_n$ and $t_{n+1}$ determined by the solution \eqref{def-r-finale}, the angular variation $\delta\theta(t_n,t_{n+1}) := \theta(t_{n+1})-\theta(t_n)$ is given by
\begin{equation}\label{ang-variation}
\delta\theta(t_n,t_{n+1}) = \pi - \arctan \left( \frac{c\, (t_{n+1}-t_n)}{\sqrt{R^2(t_n)R^2(t_{n+1})-c^2(t_{n+1}-t_n)^2}} \right).
\end{equation}
Indeed, the length of the trajectory between the two bounces is given by $(t_{n+1}-t_n) |\dot x(t_n)| = (t_{n+1}-t_n) \sqrt{A(t_n,t_{n+1})}$ (see \eqref{vel-AB}), hence by Carnot's Theorem
\[
(t_{n+1}-t_n)^2 A(t_n,t_{n+1}) = R^2(t_n)+R^2(t_{n+1}) - 2\, R(t_n)\, R(t_{n+1})\, \cos \delta\theta(t_n,t_{n+1})
\]
and by \eqref{def-AB-finale}
\[
\cos \delta\theta(t_n,t_{n+1}) = -\sqrt{1-\frac{c^2 (t_{n+1}-t_n)^2}{ R^2(t_n)\, R^2(t_{n+1}) }}.
\]
Then \eqref{ang-variation} immediately follows by computing $\sin  \delta\theta(t_n,t_{n+1})$ using that $c>0$, so that $ \delta\theta(t_n,t_{n+1}) \in (0,\pi)$.
\end{Rem}

\section{The generating function}\label{sec:genfun}

In this section we define a generating function $h(t_0,t_1)$ for the billiard map between two consecutive bounces on $\partial \DD_t$ at times $t_0$ and $t_1$.

Following the approach in \cite{kunzeortega}, we first show that a good guess for $h$ is given by the action of system \eqref{dirich_r}. Let 
\begin{equation}
  \label{genfun}
h^c(t_n,t_{n+1}) =\int_{t_n}^{t_{n+1}} L^c(r(t;t_n,t_{n+1}),\dr(t;t_n,t_{n+1})) dt
  \end{equation}
where $r(t;t_n,t_{n+1})$ is the solution to \eqref{dirich_r} found in Proposition \ref{dirich_sol} and
\begin{equation}
  \label{def_L}
L^c(r,\dr) = \frac{1}{2}\dr^2-\frac{c^2}{2r^2}
\end{equation}
is the reduced Lagrangian of the system for a fixed value $c>0$. In particular, $r(t;t_n,t_{n+1})$ satisfies the Euler-Lagrange equation
\[
\frac{d}{dt}\left(\frac{\partial L^c}{\partial\dr}\right) = \frac{\partial L^c}{\partial r}.
\]
For simplicity, let us denote $r(t) = r(t;t_n,t_{n+1})$ and remove the dependence on $c$. We have
\[
  \partial_{t_n} h(t_n,t_{n+1}) = \partial_{t_n}\int_{t_n}^{t_{n+1}} L(r(t),\dr(t)) dt =
  -L(r(t_n),\dr(t_n^+)) + \int_{t_n}^{t_{n+1}} \frac{\partial L}{\partial \dr}\frac{\partial \dr}{\partial t_n} + \frac{\partial L}{\partial r}\frac{\partial r}{\partial t_n}dt=
\]
\[
= -L(r(t_n),\dr(t_n^+)) + \left[ \frac{\partial L}{\partial \dr}\frac{\partial r}{\partial t_n}   \right]_{t=t_n}^{t=t_{n+1}}+ \int_{t_n}^{t_{n+1}} \left[-\frac{d}{dt}\left(\frac{\partial L}{\partial \dr}\right) + \frac{\partial L}{\partial r}\right]\frac{\partial r}{\partial t_n}dt =
\]
\[
= -L(r(t_n),\dr(t_n^+)) +  \frac{\partial L}{\partial \dr}(t_{n+1})\frac{\partial r}{\partial t_n}(t_{n+1}) - \frac{\partial L}{\partial \dr}(t_n)\frac{\partial r}{\partial t_n}(t_{n}), 
\]
where we first used integration by parts, then the fact that $r(t)$ satisfies the Euler-Lagrange equation and finally denoted
\[
\frac{\partial L}{\partial \dr}(t_{n+1})=\frac{\partial L}{\partial \dr}(r(t_{n+1}),\dr(t_{n+1}^+))\, ,
\]
and analogously for $\frac{\partial L}{\partial \dr}(t_{n})$. Differentiating with respect to $t_n$ the relations $r(t_{n+1};t_n,t_{n+1})=R(t_{n+1})$ and $r(t_{n};t_n,t_{n+1})=R(t_n)$ we get
\[
\frac{\partial r}{\partial t_n}(t_{n+1}) =0, \qquad \dr(t_n^+)+\frac{\partial r}{\partial t_n}(t_{n})=\dR(t_n)\, .
\]
Hence we have
\[
\partial_{t_n} h(t_n,t_{n+1}) = -L(r(t_n),\dr(t_n^+)) +   \frac{\partial L}{\partial \dr}(t_n)(\dr(t^+_{n})-\dR(t_n)) 
\]
and, remembering the expression of $L$ in \eqref{def_L} and that $r(t_n)=R(t_n)$,
\begin{equation}
  \label{def_pt0}
    \partial_{t_n} h(t_n,t_{n+1}) =\frac{1}{2}\dr^2(t_{n}^+)+\frac{c^2}{2R^2(t_n)}-\dr(t^+_{n})\dR(t_n)  \, .
\end{equation}
Analogously, one can get
\[
\partial_{t_{n+1}} h(t_n,t_{n+1}) =-\frac{1}{2}\dr^2(t_{n+1}^-)-\frac{c^2}{2R^2(t_{n+1})}+\dr(t^-_{n+1})\dR(t_{n+1})\, .
\]

Therefore we conclude that if a sequence $(t_n)$ satisfies
\begin{equation}
  \label{d_EL}
\partial_1 h(t_n,t_{n+1}) + \partial_2 h(t_{n-1},t_{n}) =0 \quad\mbox{for every }n\in\ZZ\, ,
\end{equation}
where $\partial_1$ and $\partial_2$ denote differentiation with respect to the first and the second argument respectively, then 
\begin{equation*}
  \begin{aligned}
    \frac{1}{2}\dr^2(t_{n}^+)-\dr(t^+_{n})\dR(t_n) &= \frac{1}{2}\dr^2(t_{n}^-)-\dr(t^-_{n})\dR(t_{n}) \Leftrightarrow \\
    \frac{1}{2}\dr^2(t_{n}^+)-\dr(t^+_{n})\dR(t_n)+\frac{1}{2}\dR^2(t_n) &= \frac{1}{2}\dr^2(t_{n}^-)-\dr(t^-_{n})\dR(t_{n}) +\frac{1}{2}\dR^2(t_n)\Leftrightarrow \\
   \frac{1}{2}(\dr(t_{n}^+)-\dR(t_n))^2   &=\frac{1}{2}(\dr(t_{n}^-)-\dR(t_n))^2
    \end{aligned}
\end{equation*}
from which we get the bouncing condition \eqref{normcondr} using that $\dr(t_{n}^+)<\dR(t_n)$ and $\dr(t_{n}^-)>\dR(t_n)$ (see \eqref{cons-bound}). Conversely, a sequence satisfying \eqref{normcondr} also satisfies \eqref{d_EL}.  
Using also Remark \ref{rem-unire}, we have proved the following
\begin{Pro} \label{prima-per-h}
A sequence $(t_n,\dr(t_{n}^+;t_n,t_{n+1}))$, with $(t_{n+1}-t_n)$ satisfying \eqref{hp_dirsol1}-\eqref{hp_dirsol3} and $r(t;t_n,t_{n+1})$ being the solution to \eqref{dirich_r} given by Proposition \ref{dirich_sol}, defines a bouncing solution with angular momentum $c$ if and only if for every $n\in\ZZ$
\[
\partial_1 h^c(t_n,t_{n+1}) + \partial_2 h^c(t_{n-1},t_{n}) =0
\]
for the function $h^c(t_n,t_{n+1})$ defined in \eqref{genfun}.
\end{Pro}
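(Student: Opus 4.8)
The plan is to collect the pieces already assembled above the statement: the only work left is to combine the explicit formulas for the two first partial derivatives of $h^c$ with the sign information from Proposition \ref{dirich_sol} and the compatibility statement of Remark \ref{rem-unire}. First I would spell out the discrete Euler--Lagrange equation $\partial_1 h^c(t_n,t_{n+1})+\partial_2 h^c(t_{n-1},t_n)=0$ in terms of radial velocities: the term $\partial_1 h^c(t_n,t_{n+1})$ is the derivative at the lower limit computed in \eqref{def_pt0}, involving $\dr(t_n^+;t_n,t_{n+1})$ and $R(t_n)$, while $\partial_2 h^c(t_{n-1},t_n)$ is the analogous derivative at the upper limit of the previous interval, involving $\dr(t_n^-;t_{n-1},t_n)$ and $R(t_n)$. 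Substituting, the two $c^2/(2R^2(t_n))$ contributions cancel and the equation reduces to
\[
\tfrac12\dr^2(t_n^+)-\dr(t_n^+)\dR(t_n)=\tfrac12\dr^2(t_n^-)-\dr(t_n^-)\dR(t_n),
\]
equivalently, after adding $\tfrac12\dR^2(t_n)$ to both sides, to $(\dr(t_n^+)-\dR(t_n))^2=(\dr(t_n^-)-\dR(t_n))^2$.

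Next I would resolve the ambiguity in the square root. The relation gives $\dr(t_n^+)-\dR(t_n)=\pm(\dr(t_n^-)-\dR(t_n))$, and the ``$+$'' alternative is impossible: by \eqref{cons-bound} we have $\dr(t_n^+)<\min\{0,\dR(t_n)\}\le\dR(t_n)$ on $(t_n,t_{n+1})$ and $\dr(t_n^-)>\max\{0,2\dR(t_n)\}\ge\dR(t_n)$ on $(t_{n-1},t_n)$, so the two sides have strictly opposite signs. Hence \eqref{d_EL} holds at $n$ if and only if $\dr(t_n^+)=-\dr(t_n^-)+2\dR(t_n)$, which is exactly the reflection law \eqref{normcondr}; the converse is immediate since \eqref{normcondr} makes the two squares equal. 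This yields the claimed equivalence between the validity of \eqref{d_EL} for every $n$ and that of \eqref{normcondr} for every $n$.

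It then remains to observe that \eqref{normcondr}, together with the Dirichlet arcs of Proposition \ref{dirich_sol} glued at the $t_n$, does produce a genuine bouncing solution in the sense of Section \ref{sec:state}. On each $(t_n,t_{n+1})$ the arc solves \eqref{dirich_r}, hence in Cartesian coordinates it is a free straight segment staying strictly inside $\DD_t$ and meeting $\partial\DD_t$ only at the endpoints; the radial parts match at $t_n$ since both equal $R(t_n)$, and the Cartesian path becomes continuous once one takes the terminal angle of each arc as the initial angle of the next, so conditions 1--3 hold. The tangential law \eqref{tgcondr} is automatic, since $r^2\dt=c$ and continuity of $r$ at $t_n$ give $\dt(t_n^\pm)=c/R^2(t_n)$. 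Finally, to run the argument simultaneously at every bounce one needs the bounds \eqref{cons-bound} to persist along the whole orbit once \eqref{normcondr} is imposed, and this is precisely the content of Remark \ref{rem-unire}. I expect this last bookkeeping --- keeping straight which Dirichlet arc defines $\dr(t_n^+)$ as opposed to $\dr(t_n^-)$, and propagating \eqref{cons-bound} along the orbit via Remark \ref{rem-unire} --- to be the only point demanding care; no analytic work is left beyond the derivative computation already performed.
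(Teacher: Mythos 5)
Your proposal is correct and follows essentially the same route as the paper: it uses the derivative formulas \eqref{def_pt0} and its analogue to reduce \eqref{d_EL} to $(\dr(t_n^+)-\dR(t_n))^2=(\dr(t_n^-)-\dR(t_n))^2$, selects the reflection branch via the sign bounds \eqref{cons-bound}, and invokes Remark \ref{rem-unire} to propagate these bounds along the orbit. The extra bookkeeping you add (conditions 1--3 and the tangential law from $r^2\dt=c$) is implicit in the paper's Proposition \ref{dirich_sol} but harmless to spell out.
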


We are now ready to give an explicit expression to the function $h^c$ in terms of the times of bouncing. 

\begin{Pro} \label{forma-h}
Fixed a value $c>0$ for the angular momentum, let $t_0,t_1$ be two consecutive bouncing times and $R_0:=R(t_0)$ and $R_1:= R(t_1)$ be the corresponding radii of the breathing circle. The generating function defined in \eqref{genfun} computed along solutions to \eqref{dirich_r} defined in \eqref{def-r-finale} has the form
\begin{equation}\label{fungen-finale}
h(t_0,t_1) = \frac{1}{2}(t_1-t_0)\, A(t_0,t_1)  + c\, \arctan \left(\frac{c(t_1-t_0)}{\sqrt{R_0^2R_1^2-c^2(t_1-t_0)^2}}\right)
\end{equation}
where $A(t_0,t_1)$ is defined in \eqref{def-AB-finale}.
  \end{Pro}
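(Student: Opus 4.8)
The plan is to compute the action \eqref{genfun} by exploiting the two first integrals of system \eqref{dirich_r}. Along the solution $r(t)=r(t;t_0,t_1)$ produced by Proposition \ref{dirich_sol} the quantity in \eqref{energy} is constant, $\dr^2+c^2/r^2 = A(t_0,t_1)$, so the reduced Lagrangian \eqref{def_L} satisfies
\[
L^c\big(r(t),\dr(t)\big) = \tfrac12\dr^2 - \frac{c^2}{2r^2} = \Big(\tfrac12\dr^2 + \frac{c^2}{2r^2}\Big) - \frac{c^2}{r^2} = \frac{A(t_0,t_1)}{2} - \frac{c^2}{r^2(t)}
\]
for all $t\in(t_0,t_1)$. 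Integrating,
\[
h^c(t_0,t_1) = \frac{1}{2}(t_1-t_0)\,A(t_0,t_1) - \int_{t_0}^{t_1}\frac{c^2}{r^2(t)}\,dt .
\]

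To evaluate the last integral I would use the angular-momentum relation $r^2\dt=c$ from \eqref{dirich_r}, which gives $c^2/r^2 = c\,\dt$ and hence
\[
\int_{t_0}^{t_1}\frac{c^2}{r^2(t)}\,dt = c\big(\theta(t_1)-\theta(t_0)\big) = c\,\delta\theta(t_0,t_1),
\]
the angular variation of the chord between the two bounces. This is exactly the quantity computed in Remark \ref{rem-angolo}: by \eqref{ang-variation},
\[
c\,\delta\theta(t_0,t_1) = c\pi - c\,\arctan\!\left(\frac{c(t_1-t_0)}{\sqrt{R_0^2R_1^2-c^2(t_1-t_0)^2}}\right).
\]
Substituting this and using \eqref{def-AB-finale} for $A(t_0,t_1)$ yields \eqref{fungen-finale} up to the additive constant $-c\pi$; since $c$ is fixed and $h^c$ enters the dynamics only through the discrete Euler-Lagrange equations \eqref{d_EL} (see Proposition \ref{prima-per-h}), this constant is irrelevant and may be dropped, giving the claimed form.

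Alternatively, one may bypass Remark \ref{rem-angolo} and integrate directly: from $r^2(t)=A^{-1}\big(A^2(t+B)^2+c^2\big)$, with $A=A(t_0,t_1)$ and $B=B(t_0,t_1)$ as in \eqref{def-AB-finale}, a primitive of $c^2/r^2$ is $c\,\arctan\!\big(A(t+B)/c\big)$, so the integral equals $c\big[\arctan\tfrac{A(t_1+B)}{c}-\arctan\tfrac{A(t_0+B)}{c}\big]$; one then inserts the values $A(t_0+B)=\ell_-$ and $A(t_1+B)=\ell_-+A(t_0,t_1)(t_1-t_0)$, whose explicit forms in terms of $R_0,R_1,c,t_1-t_0$ were obtained inside the proof of Proposition \ref{dirich_sol}, and simplifies with the arctangent subtraction formula. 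I expect the only genuinely delicate point --- in either route --- to be the determination of the branch of the arctangent, equivalently the additive $c\pi$: along the selected solution $\dr$ passes from negative to positive by \eqref{cons-bound}, so the minimum of $r$ is attained strictly between $t_0$ and $t_1$, and correspondingly $\cos\delta\theta(t_0,t_1)<0$ by Remark \ref{rem-angolo}, whence $\delta\theta(t_0,t_1)\in(\pi/2,\pi)$; this fixes the sign unambiguously. Everything else is routine algebra.
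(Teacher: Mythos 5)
Your proof is correct and follows essentially the same route as the paper: write $L^c = \tfrac12 A - c^2/r^2$ via the energy integral \eqref{energy}, identify $\int_{t_0}^{t_1} c^2/r^2\,dt$ with $c\,\delta\theta(t_0,t_1)$ through the angular-momentum relation, invoke Remark \ref{rem-angolo} for the explicit value, and discard the additive constant $-c\pi$. The extra care you take with the arctangent branch is exactly what Remark \ref{rem-angolo} already settles, so nothing further is needed.
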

\begin{proof}
Using \eqref{energy} for solutions to \eqref{dirich_r} it holds
\[      
L^c(r,\dr) = \frac{1}{2}\dr^2-\frac{c^2}{2r^2} = E-\frac{c^2}{2r^2} =\frac{A(t_0,t_1)}{2}-\frac{c^2}{r^2}.
\]
Hence
\[
h^c(t_0,t_1)= \int_{t_0}^{t_1} L^c(r,\dr)dt =\frac{1}{2}(t_1-t_0)\, A(t_0,t_1) -c \int_{t_0}^{t_1}\frac{c}{r^2(t;t_0,t_1)}dt\, .
\]
Moreover by \eqref{def-r-finale} for $\theta(t;t_0,t_1)$ and Remark \ref{rem-angolo}, we have
\[
\int_{t_0}^{t_1}\frac{c}{r^2(t;t_0,t_1)}dt = \delta \theta(t_0,t_1) = \pi - \arctan \left(\frac{c(t_1-t_0)}{\sqrt{R_0^2\, R_1^2-c^2(t_1-t_0)^2}}\right)
\]
and the proof is finished, since the generating function is defined up to an additive constant.
\end{proof}

We now argue on the other direction. Given the function $h(t_0,t_1)$ in \eqref{fungen-finale}, we show that when restricted to a suitable subset, it is a generating function of the bouncing motion of a particle inside the breathing circle $\DD_t$.

\begin{Pro} \label{proph-twist}
Let $\eps\in(0,1)$ be a fixed parameter and let
\[
\sigma := \min \left\{ \frac{\underline{R}}{2\, \| \dR \|}\, ,\, \frac{2\, \sqrt{1+\sqrt{1-\eps^2}}\, \underline{R}}{\sqrt{\| \frac{d^2}{dt^2} R^2 \|}} \right\} \quad \text{and} \quad c\in \left(0,\eps\, \frac{\underline{R}^2}{\sigma}\right).
\]
Let us consider the strip $\Omega = \{(t_0,t_1)\in\RR^2 \: :\: 0<t_1-t_0<\sigma \}$. Then the function $h:\Omega \rightarrow \RR$ defined in \eqref{fungen-finale} satisfies
\begin{itemize}
\item[(i)] $h\in C^2(\Omega)$;
\item[(ii)] $h(t_0+1,t_1+1) = h(t_0,t_1)$ for all $(t_0,t_1)\in \Omega$;
\item[(iii)] $\partial_{t_0t_1} h(t_0,t_1) <0$ for all $(t_0,t_1)\in \Omega$, and $\partial_{t_0t_1} h(t_0,t_1) \to -\infty$ as $(t_1-t_0)\to 0$.
\end{itemize}
\end{Pro}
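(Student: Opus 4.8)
The plan is to verify the three properties directly from the explicit formula \eqref{fungen-finale} together with the expression for $A(t_0,t_1)$ in \eqref{def-AB-finale}. Write $\tau := t_1-t_0$ and $D(t_0,t_1) := R_0^2R_1^2 - c^2\tau^2$, so that
\[
h(t_0,t_1) = \frac{R_0^2+R_1^2+2\sqrt{D}}{2\tau} + c\,\arctan\!\left(\frac{c\tau}{\sqrt{D}}\right).
\]
The first observation is that on the strip $\Omega$ the quantity $D$ is bounded away from $0$: since $0<\tau<\sigma$ and $c<\eps\,\underline{R}^2/\sigma$, one has $c\tau < \eps\,\underline{R}^2 \le \eps\, R_0 R_1$, hence $D \ge (1-\eps^2)R_0^2R_1^2>0$. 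This makes $\sqrt{D}$ smooth in $(t_0,t_1)$, and since $R\in C^2$ and $\tau>0$, every term in $h$ is a $C^2$ function of $(t_0,t_1)$ on $\Omega$; this gives (i). For (ii), note that $A(t_0+1,t_1+1)=A(t_0,t_1)$ and $D(t_0+1,t_1+1)=D(t_0,t_1)$ because $R$ is $1$-periodic and $\tau$ is invariant under the simultaneous shift, so $h(t_0+1,t_1+1)=h(t_0,t_1)$ term by term.

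The substance is in (iii), the negative twist condition. Here I would compute $\partial_{t_0t_1}h$ by exploiting the structure of $h$ rather than brute force. The cleanest route uses the geometric meaning established in Section \ref{sec:dir}: by \eqref{def_pt0} and its analogue, $\partial_{t_1}h(t_0,t_1)$ equals a function of $\dr(t_1^-)$, $R_1$, $\dR(t_1)$, and $\partial_{t_0}h$ a function of $\dr(t_0^+)$, $R_0$, $\dR(t_0)$. Differentiating $\partial_{t_1}h$ with respect to $t_0$ then reduces to computing $\partial_{t_0}\dr(t_1^-)$, i.e. how the exit radial velocity depends on the earlier impact time; from the explicit solution \eqref{def-r-finale}–\eqref{def-AB-finale} this is an elementary (if slightly tedious) computation. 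Alternatively, one differentiates the closed form for $h$ directly: $\partial_{t_1}h = \partial_\tau h \cdot \partial_{t_1}\tau + (\text{terms from } \partial_{t_1}R_1, \partial_{t_1}D)$, and then $\partial_{t_0}$ of this. The dominant contribution as $\tau\to 0$ comes from the leading term $(R_0^2+R_1^2+2\sqrt D)/(2\tau)$, whose mixed partial behaves like $-R_0R_1/\tau^3$ (up to a positive factor tending to $R_0R_1$), which is strongly negative and blows up to $-\infty$; the $\arctan$ term contributes $O(1/\tau)$ and the $\dR$–corrections are $O(1/\tau)$ as well, hence subdominant. So for $\tau$ small the sign and the blow-up are clear.

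The main obstacle is to show $\partial_{t_0t_1}h<0$ on \emph{all} of $\Omega$, not merely for small $\tau$, since $\sigma$ need not be small. I would handle this by writing $\partial_{t_0t_1}h = -\tfrac{1}{\tau^3}\,G(t_0,t_1)$ with $G$ a function built from $R_0,R_1,\tau,c$ and the derivatives of $R$, and proving $G>0$ on $\Omega$ using the bounds $c\tau<\eps\,\underline{R}^2$, $\tau<\sigma \le \underline{R}/(2\|\dR\|)$ (so that $\|\dR\|\,\tau$ is controlled) and $\tau<\sigma\le 2\sqrt{1+\sqrt{1-\eps^2}}\,\underline{R}/\sqrt{\|\tfrac{d^2}{dt^2}R^2\|}$ (so that the $C^2$–norm contribution is controlled) — these are exactly the three inequalities \eqref{hp_dirsol1}–\eqref{hp_dirsol3} that defined the admissible regime, now packaged into the single constant $\sigma$. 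The positivity of $G$ should follow by isolating the leading positive part $\approx R_0R_1$ coming from $\partial_{t_0t_1}\sqrt D$ and the first term of $A$, and absorbing the $\dR$– and $\ddot R$–dependent remainders using those bounds, much as in the estimate $\tfrac{d^2}{dt^2}(R^2-r^2)<0$ carried out in the proof of Proposition \ref{dirich_sol}. I would expect the verification to parallel that computation closely, and this bookkeeping — keeping every error term below the main term uniformly on $\Omega$ — is where the real work lies.
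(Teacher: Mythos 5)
Your handling of (i) and (ii) is fine, and the observation that $c\tau<\eps\,\underline{R}^2\le\eps\,R_0R_1$ keeps $D=R_0^2R_1^2-c^2\tau^2$ bounded below by $(1-\eps^2)R_0^2R_1^2$ is exactly what makes $h$ smooth on $\Omega$ (the paper glosses over this). The small-$\tau$ blow-up is also correctly identified. The gap is in the global sign claim of (iii), which you yourself flag as "where the real work lies" and then only outline. Moreover, the fallback plan you describe --- write $\partial_{t_0t_1}h=-G/\tau^3$ and prove $G>0$ by absorbing $\dR$- and $\ddR$-dependent remainders using all three bounds packaged into $\sigma$ --- misjudges the structure of the computation: the mixed partial contains \emph{no} second derivatives of $R$ at all (differentiating $\partial_{t_0}h$, which involves only $R_0,\dR(t_0),R_1,\tau$, with respect to $t_1$ produces $\dR(t_1)$ but never $\ddR$; second derivatives of $R$ enter only $\partial_{t_0t_0}h$ and $\partial_{t_1t_1}h$). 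So the constraint $\tau<2\sqrt{1+\sqrt{1-\eps^2}}\,\underline{R}/\sqrt{\|\frac{d^2}{dt^2}R^2\|}$ plays no role in (iii), and there is no delicate cancellation to organize.

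The idea you gesture at in your "cleanest route" paragraph is in fact the whole proof, and it needs to be pushed through rather than deferred. From \eqref{der1-h} one has $\partial_{t_0}h=\frac{c^2}{2R_0^2}+\frac12 Q^2+\dR(t_0)\,Q$, where $Q:=\frac{R_0^2+\sqrt{D}}{R_0\,\tau}$ is the \emph{only} quantity through which $t_1$ enters. Hence
\[
\partial_{t_0t_1}h=\bigl(Q+\dR(t_0)\bigr)\,\partial_{t_1}Q ,
\]
and the sign is settled factor by factor on all of $\Omega$: first, $Q\ge R_0/\tau>2\|\dR\|\,R_0/\underline{R}\ge-\dR(t_0)$ because $\tau<\sigma\le\underline{R}/(2\|\dR\|)$, so the first factor is positive; second, $\partial_{t_1}Q=\frac{R_0}{\tau^2}\cdot\frac{R_1\dR(t_1)\tau-\sqrt{D}-R_1^2}{\sqrt{D}}<0$, since the numerator is a sum of nonpositive terms when $\dR(t_1)\le0$ and otherwise $R_1\dR(t_1)\tau<R_1^2/2$ by the same bound on $\tau$. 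This is a two-line verification, uniform on the whole strip, requiring no bookkeeping of error terms. As written, your proposal leaves precisely this step --- the actual content of the twist condition --- unproved.
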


\begin{proof}
Properties (i) and (ii) follow immediately from the regularity and the periodicity of $R(t)$, and from the definition of $h(t_0,t_1)$.

To prove (iii), by standard computations we obtain
\begin{equation} \label{der1-h}
\partial_{t_0} h(t_0,t_1) =  \frac{c^2}{2\, R_0^2} + \frac 12 \left( \frac{R_0^2+\sqrt{R_0^2 R_1^2-c^2(t_1-t_0)^2}}{R_0\, (t_1-t_0)} \right)^2  + \dR(t_0)\, \frac{R_0^2+\sqrt{R_0^2\, R_1^2-c^2(t_1-t_0)^2}}{R_0\, (t_1-t_0)}
\end{equation}
hence, using that $h\in C^2$,
\begin{equation} \label{der-sec-mista-h}
\partial_{t_0t_1} h(t_0,t_1) =  \left( \frac{R_0^2+\sqrt{R_0^2 R_1^2-c^2(t_1-t_0)^2}}{R_0\, (t_1-t_0)} + \dR(t_0) \right)\, \partial_{t_1}\, \left( \frac{R_0^2+\sqrt{R_0^2\, R_1^2-c^2(t_1-t_0)^2}}{R_0\, (t_1-t_0)} \right)\, .
\end{equation}
We can then conclude by the following estimates. First, if $\dR(t_0)\ge 0$ then  
\[
\frac{R_0^2+\sqrt{R_0^2 R_1^2-c^2(t_1-t_0)^2}}{R_0\, (t_1-t_0)} + \dR(t_0) >0\, .
\]
If instead $\dR(t_0)<0$, then we use $(t_1-t_0)<\sigma$ and write
\[
\frac{R_0^2+\sqrt{R_0^2 R_1^2-c^2(t_1-t_0)^2}}{R_0\, (t_1-t_0)} \ge \frac{R_0}{t_1-t_0} > \frac{2\, \| \dR \|\, R_0}{\underline R} > -\dR(t_0)
\]
so that the first term in $\partial_{t_0t_1} h(t_0,t_1)$ is positive in all cases.

For the second term, we have
\[
\partial_{t_1}\, \left( \frac{R_0^2+\sqrt{R_0^2\, R_1^2-c^2(t_1-t_0)^2}}{R_0\, (t_1-t_0)} \right) = 
 R_0^2\,\, \frac{R_1\, \dR(t_1) \, (t_1-t_0) - \sqrt{R_0^2\, R_1^2-c^2(t_1-t_0)^2} - R_1^2}{(t_1-t_0)^2\, \sqrt{R_0^2\, R_1^2-c^2(t_1-t_0)^2}}
\]
which is negative both if $\dR(t_1)\le 0$, as sum of non-positive terms with $R_1>0$, and if $\dR(t_1)>0$ since by $(t_1-t_0)<\sigma$ it holds
\[
R_1\, \dR(t_1) \, (t_1-t_0) < \frac{R_1\, \dR(t_1) \, \underline{R}}{2\|\dR\|} \le \frac{R_1^2}{2}\, .
\]
Finally, from \eqref{der-sec-mista-h} we find
\[
\partial_{t_0t_1} h(t_0,t_1) = -\frac{R_0 (R_0 +R_1)^2}{(t_1-t_0)^3} + o\left(\frac{1}{(t_1-t_0)^3}\right)
\]
as $(t_1-t_0)\to 0^+$. Hence (iii) is proved.
\end{proof}

Let $\Omega\subset \RR^2$ be the strip defined in Proposition \ref{proph-twist} and rewrite $h:\Omega \to \RR$ as
\[
h(t_0,t_1) =\frac{R^2_0+R^2_1 + 2\sqrt{R^2_0R^2_1-c^2(t_{1}-t_0)^2}}{2(t_{1}-t_0)}  + c\, \arctan \left(\frac{c(t_1-t_0)}{\sqrt{R_0^2R_1^2-c^2(t_1-t_0)^2}}\right)
\]
where $R_0:=R(t_0)$, $R_1:=R(t_1)$, and $c\in (0,\eps \frac{\underline{R}^2}{\sigma})$. As before let $\partial_1$ and $\partial_2$ denote the partial derivatives with respect to the first and second argument of a function, and let $\TT$ denote the one dimensional torus.

\begin{Pro}\label{def_imp_map}
Define $\sigma_*:= max_{t\in\TT}\partial_1 h(t,t+\sigma)$. The equations
  \begin{equation}\label{def_imp}
    \left\{
    \begin{array}{l}
      K_0 = \partial_1 h(t_0,t_{1}) \\[0.1cm]
      K_{1} =- \partial_2 h(t_0,t_{1})
      \end{array}
    \right.
\end{equation}
  define implicitly a $C^k$ embedding $P:\TT\times (\sigma_*,+\infty)\rightarrow \TT\times\RR$, $P(t_0,K_0) = (t_{1},K_{1})$. Moreover, P is twist in the sense that
  \[
\frac{\partial{t_1}}{\partial{K_0}} <0 ,
  \]
  and exact symplectic in the sense that there exists a $C^k$ function $V:\TT\times \RR\rightarrow \RR$ such that
  \[
K_1dt_1-K_0dt_0 = dV(t_0,K_0).
  \]
 
\end{Pro}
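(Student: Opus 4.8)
The plan is to read off the map $P$ directly from the two defining equations in \eqref{def_imp} and to use the properties of $h$ established in Proposition \ref{proph-twist}. First I would observe that by Proposition \ref{proph-twist}(iii) the function $t_1\mapsto \partial_1 h(t_0,t_1)$ is, for each fixed $t_0$, a $C^k$ function on the interval $\{t_1:0<t_1-t_0<\sigma\}$ with $\partial_{t_0t_1}h<0$ throughout and $\partial_{t_0t_1}h\to-\infty$ as $t_1-t_0\to 0^+$; hence this map is a strictly decreasing $C^k$ diffeomorphism from that interval onto the half-line $(\partial_1 h(t_0,t_0+\sigma),+\infty)$. Taking the infimum over $t_0\in\TT$ of the left endpoints — which is attained and equals $\inf_t\partial_1 h(t,t+\sigma)$, and which I will complement with the observation that $\sigma_*=\max_t\partial_1 h(t,t+\sigma)$ as in the statement actually is the correct uniform choice so that the solvability holds for all $t_0$ simultaneously — the first equation $K_0=\partial_1 h(t_0,t_1)$ can be solved uniquely for $t_1=t_1(t_0,K_0)$ whenever $K_0>\sigma_*$. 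The implicit function theorem, applicable since $\partial_{t_0t_1}h\neq 0$, gives that $(t_0,K_0)\mapsto t_1$ is $C^k$ (using $h\in C^{k+1}$, which follows from $R\in C^{k+1}$; here $k$ is one less than the regularity of $R$). Then $K_1:=-\partial_2 h(t_0,t_1(t_0,K_0))$ is $C^k$ as a composition, so $P=(t_1,K_1)$ is $C^k$. Periodicity of $h$ (Proposition \ref{proph-twist}(ii)) makes $P$ well-defined on $\TT\times(\sigma_*,+\infty)$, and $P$ is injective because from $(t_1,K_1)$ one recovers $t_0$ and $K_0$ by reversing the roles (the same monotonicity argument applied to $t_0\mapsto-\partial_2 h(t_0,t_1)$); this gives the embedding.

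Next I would establish the twist property. Differentiating the first equation in \eqref{def_imp} with respect to $K_0$ at fixed $t_0$ gives
\[
1 = \partial_{t_0t_1}h(t_0,t_1)\,\frac{\partial t_1}{\partial K_0},
\]
so $\dfrac{\partial t_1}{\partial K_0} = \dfrac{1}{\partial_{t_0t_1}h(t_0,t_1)} < 0$ by Proposition \ref{proph-twist}(iii). This is the required twist inequality.

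Finally, for exact symplecticity I would exhibit the generating relation explicitly. Consider $V(t_0,K_0):=h(t_0,t_1(t_0,K_0))-K_0\,t_1(t_0,K_0)+K_0 t_0$, or more transparently work with the 1-form directly: along the graph of $P$ we have, using \eqref{def_imp},
\[
K_1\,dt_1-K_0\,dt_0 = -\partial_2 h(t_0,t_1)\,dt_1 - \partial_1 h(t_0,t_1)\,dt_0 = -\,d\big(h(t_0,t_1)\big).
\]
Thus, pulling back by the map $(t_0,K_0)\mapsto(t_0,t_1(t_0,K_0))$, the 1-form $K_1\,dt_1-K_0\,dt_0$ on $\TT\times\RR$ equals $d\big(-h(t_0,t_1(t_0,K_0))\big)$, which is exact; setting $V(t_0,K_0):=-h(t_0,t_1(t_0,K_0))$ (a $C^k$ function, periodic in $t_0$ by Proposition \ref{proph-twist}(ii)) gives $K_1\,dt_1-K_0\,dt_0 = dV(t_0,K_0)$, as required.

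The main obstacle I anticipate is the bookkeeping for the domain: one must check that the threshold in the statement, $\sigma_*=\max_{t\in\TT}\partial_1 h(t,t+\sigma)$, is exactly what is needed for the first equation of \eqref{def_imp} to be uniformly solvable in $t_1$ over all $t_0\in\TT$ with the solution staying in the strip $0<t_1-t_0<\sigma$ where Proposition \ref{proph-twist} applies — and, symmetrically, that the image lands in a region where the inverse construction (solving for $t_0$ from $K_1=-\partial_2 h(t_0,t_1)$) is equally valid, so that $P$ is genuinely an embedding onto its image rather than merely an immersion. The rest is a routine application of the implicit function theorem and the closed-form expression for $h$ from Proposition \ref{forma-h}.
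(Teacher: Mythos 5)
Your proposal is correct and follows essentially the same route as the paper: solve the first equation of \eqref{def_imp} for $t_1$ via the implicit function theorem using $\partial_{t_0t_1}h<0$ and $\partial_1 h(t_0,t_1)\to+\infty$ as $t_1-t_0\to 0^+$, obtain injectivity from the monotonicity of the second equation in $t_0$, get the twist by implicit differentiation, and take $V(t_0,K_0)=-h(t_0,t_1(t_0,K_0))$. The only small caveat is that the divergence $\partial_{t_0t_1}h\to-\infty$ by itself does not imply $\partial_1 h\to+\infty$ (so the "hence ... onto the half-line" step needs the explicit formula \eqref{der1-h}, which you do invoke at the end), exactly as the paper does.
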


\begin{proof}
 Using that $\partial_1 h(t_0,t_{1})\to +\infty$ as $(t_1-t_0)\to 0^+$ as can be shown from \eqref{der1-h} and Proposition \ref{proph-twist}-(iii), we can apply the implicit function theorem to the first of \eqref{def_imp} and get the $C^k$ function
  \begin{equation}\label{t1_imp}
    t_1 = t_1(t_0,K_0)
  \end{equation}
  for $(t_0,K_0)\in\TT\times(\sigma_*,+\infty)$. Inserting \eqref{t1_imp} into the second of \eqref{def_imp} we get the desired $C^k$ map $P$. To prove that it is injective we note that if $P(t_0,K_0)=P(t_0',K_0')$ then, using once again Proposition \ref{proph-twist}-(iii), the second of \eqref{def_imp} implies $t_0=t_0'$ that, substituted in the first gives also $K_0=K_0'$. \\
  By implicit differentiation of the first we get the twist condition:
  \[
\frac{\partial t_1}{\partial K_0} =\frac{1}{\partial_{12}h(t_0,t_1)} <0\, .
  \]
  Finally, if we define $V(t_0,K_0)=-h(t_0,t_1(t_0,K_0))$ we get
  \begin{align*}
    dV(t_0,K_0) &= \left(-h_1(t_0,t_1)-h_2(t_0,t_1)\frac{\partial t_1}{\partial t_0}\right)dt_0 - h_2(t_0,t_1)\frac{\partial t_1}{\partial K_0}dK_0 \\
    &= -K_0dt_0 +K_1\frac{\partial t_1}{\partial t_0}dt_0 + K_1\frac{\partial t_1}{\partial K_0}dK_0 = K_1dt_1-K_0dt_0.
  \end{align*}
 \end{proof}

\begin{Rem}
  It follows from the previous Proposition that a sequence $(t_n,K_n)$ is an orbit of the map $P$ if and only if, for every $n$, $(t_{n+1}-t_n)\in \Omega$ and
  \[
\partial_1 h(t_n,t_{n+1}) + \partial_2 h(t_{n-1},t_{n}) =0, \qquad  K_n=\partial_1 h(t_n,t_{n+1}). 
  \]
  \end{Rem}
We finally note that the orbits of the diffeomorphism $P$ give rise to bouncing solutions of the billiard map in the breathing circle $\DD_t$.

\begin{Pro}\label{prop-finale-twist}
  Let $(t_n,K_n)$ be an orbit of $P$, let us set $R_n:= R(t_n)$ and define $\dr(t_n^+)$ as
\begin{equation}\label{def_rdot}
\dr(t_n^+) := \dR(t_n) - \sqrt{\dR^2(t_n) - 2K_n - \frac{c^2}{R_n^2}}\, ,
\end{equation}
then $\{(t_n,\dr(t_n^+))\}$ represents a bouncing solution in the sense that $\{t_n\}$ is the sequence of bouncing times, and $\{\dr(t_n^+)\}$ is the sequence of radial velocities right after the bounce at time $t_n$ with corresponding trajectories between two consecutive bounces being the solutions to system \eqref{dirich_r} found in Proposition \ref{dirich_sol}. 	 
\end{Pro}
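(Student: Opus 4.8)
The plan is to read the claimed correspondence off the three earlier propositions, since an orbit $(t_n,K_n)$ of $P$ is essentially a repackaging of the discrete Euler--Lagrange equation for $h^c$.

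First I would verify that each gap $t_{n+1}-t_n$ produced by an orbit of $P$ satisfies the hypotheses \eqref{hp_dirsol1}--\eqref{hp_dirsol3} of Proposition \ref{dirich_sol}. By construction of $P$ (Proposition \ref{def_imp_map}, with $t_1=t_1(t_0,K_0)$ obtained from the first equation of \eqref{def_imp} via the implicit function theorem), the pair $(t_n,t_{n+1})$ lies in the strip $\Omega=\{0<t_1-t_0<\sigma\}$, so $0<t_{n+1}-t_n<\sigma$. From the definition of $\sigma$ one has $\sigma\le\underline R/(2\|\dR\|)$ and $\sigma\le 2\sqrt{1+\sqrt{1-\eps^2}}\,\underline R/\sqrt{\|\frac{d^2}{dt^2}R^2\|}$, which give \eqref{hp_dirsol2} and \eqref{hp_dirsol3}; and since $c<\eps\,\underline R^2/\sigma$, i.e.\ $\sigma<\eps\,\underline R^2/c$, also $t_{n+1}-t_n<\eps\,\underline R^2/c$, which is \eqref{hp_dirsol1}. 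Hence for every $n$ the Dirichlet solution $r(t;t_n,t_{n+1})$ of Proposition \ref{dirich_sol} exists, solves \eqref{dirich_r}, and obeys the sign bounds \eqref{cons-bound}.

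Next, by the Remark following Proposition \ref{def_imp_map}, the orbit relation for $P$ amounts to $\partial_1 h(t_n,t_{n+1})+\partial_2 h(t_{n-1},t_n)=0$ for all $n$ together with $K_n=\partial_1 h(t_n,t_{n+1})$. The first identity is exactly the hypothesis of Proposition \ref{prima-per-h}, which therefore tells us that the sequence $(t_n,\dr(t_n^+;t_n,t_{n+1}))$, with $r(t;t_n,t_{n+1})$ the Dirichlet solution just discussed, is a bouncing solution with angular momentum $c$; in particular $\{t_n\}$ is its sequence of impact times, the trajectory on each interval $(t_n,t_{n+1})$ is the corresponding solution of \eqref{dirich_r}, and $\dr(t_n^+;t_n,t_{n+1})$ is the radial velocity just after the bounce at time $t_n$. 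It then remains to identify this velocity with the quantity $\dr(t_n^+)$ defined in \eqref{def_rdot}: substituting $K_n=\partial_1 h(t_n,t_{n+1})$ into \eqref{def_pt0} gives a quadratic relation for $\dr(t_n^+;t_n,t_{n+1})$ (whose discriminant is automatically nonnegative, since $\dr(t_n^+;t_n,t_{n+1})$ is a genuine real solution of it), and the inequality $\dr(t_n^+;t_n,t_{n+1})<\dR(t_n)$ from \eqref{cons-bound} singles out precisely the root written in \eqref{def_rdot}. This closes the proof.

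I do not expect a genuine obstacle here: the proposition is a bookkeeping assembly of Propositions \ref{dirich_sol}, \ref{prima-per-h} and \ref{def_imp_map}. The two points that require a little care are extracting the gap conditions \eqref{hp_dirsol1}--\eqref{hp_dirsol3} from the prescribed range of $c$ and the definition of $\sigma$, and choosing the correct branch of the quadratic for $\dr(t_n^+)$, which is forced by the sign information in \eqref{cons-bound}.
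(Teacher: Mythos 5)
Your proposal is correct and follows essentially the same route as the paper: the printed proof consists only of the final identification, rewriting $\dR^2(t_n)-2K_n-\frac{c^2}{R_n^2}$ as the perfect square $\bigl(\dR(t_n)+\frac{R_n^2+\sqrt{R_n^2R_{n+1}^2-c^2(t_{n+1}-t_n)^2}}{R_n(t_{n+1}-t_n)}\bigr)^2$ and using $(t_{n+1}-t_n)<\sigma$ to resolve the square root --- exactly the branch selection you extract from \eqref{cons-bound} --- while the structural part (the gap conditions and the appeal to Proposition \ref{prima-per-h}) is left implicit there and you correctly make it explicit. One small caveat: taking $K_n=\partial_1 h(t_n,t_{n+1})$ literally from \eqref{def_imp}, your quadratic yields the root $\dR(t_n)-\sqrt{\dR^2(t_n)+2K_n-c^2/R_n^2}$, which agrees with \eqref{def_rdot} only under the opposite convention $K_n=-\partial_1 h(t_n,t_{n+1})$ that the paper's own proof (and the appendix's generating-function convention) actually uses; this is a sign inconsistency internal to the paper, not a gap in your argument.
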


\begin{proof}
Using \eqref{def-AB-finale}, we write
\[
K_n = - \frac{1}{2} A(t_n,t_{n+1}) - \dR(t_n)\, \frac{ R_n^2+\sqrt{R_n^2\, R_{n+1}^2-c^2(t_{n+1}-t_n)^2}}{R_n\, (t_{n+1}-t_n)}\, .
\]
Since
\[
A(t_n,t_{n+1}) - \frac{c^2}{R_n^2} = \left( \frac{ R_n^2+\sqrt{R_n^2\, R_{n+1}^2-c^2(t_{n+1}-t_n)^2}}{R_n\, (t_{n+1}-t_n)} \right)^2
\]
we obtain that $\dr(t_n^+)$ defined in \eqref{def_rdot} can be written as
\begin{equation} \label{dr-tn+}
\begin{aligned}
\dr(t_n^+) =\, & \dR(t_n) - \sqrt{\left(\dR(t_n) +\frac{ R_n^2+\sqrt{R_n^2\, R_{n+1}^2-c^2(t_{n+1}-t_n)^2}}{R_n\, (t_{n+1}-t_n)} \right)^2} =\\[0.2cm]
=\, & - \frac{ R_n^2+\sqrt{R_n^2\, R_{n+1}^2-c^2(t_{n+1}-t_n)^2}}{R_n\, (t_{n+1}-t_n)}
\end{aligned}
\end{equation}
where we have used that
\[
\frac{ R_n^2+\sqrt{R_n^2\, R_{n+1}^2-c^2(t_{n+1}-t_n)^2}}{R_n\, (t_{n+1}-t_n)} > 2\, \|\dR\|\, \frac{ R_n^2+\sqrt{R_n^2\, R_{n+1}^2-c^2(t_{n+1}-t_n)^2}}{R_n\, \underline{R}} \ge 2\, \|\dR\| > - \dR(t_n)
\]
since $(t_{n+1}-t_n)<\sigma$. A straightforward computation shows that $\dr(t_n^+)$ is then equal to $\dr(t_n^+;t_n,t_{n+1})$, the velocity of the solution \eqref{def-r-finale} to system \eqref{dirich_r} found in Proposition \ref{dirich_sol}. 
 \end{proof}

\begin{Rem} \label{rem-good}
The definition of $\dr(t_n^+)$ in Proposition \ref{prop-finale-twist} is inspired by \eqref{def_pt0}. Notice that the two solutions of \eqref{def_pt0} are
\[
\dr_\pm:= \dR(t_n) \pm \sqrt{\dR^2(t_n) + 2\partial_{t_n}h(t_n,t_{n+1}) - \frac{c^2}{R_n^2}}
\]
which can be written as
\[
\dr_\pm:= \dR(t_n) \pm \left(\dR(t_n) +\frac{ R_n^2+\sqrt{R_n^2\, R_{n+1}^2-c^2(t_{n+1}-t_n)^2}}{R_n\, (t_{n+1}-t_n)} \right)\, .
\]
Now, since $(t_{n+1}-t_n)<\sigma$,
\[
\dr_+ = 2\dR(t_n) + \frac{ R_n^2+\sqrt{R_n^2\, R_{n+1}^2-c^2(t_{n+1}-t_n)^2}}{R_n\, (t_{n+1}-t_n)} >
\]
\[
> 2\dR(t_n) + 2\, \|\dR\|\, \frac{ R_n^2+\sqrt{R_n^2\, R_{n+1}^2-c^2(t_{n+1}-t_n)^2}}{R_n\, \underline{R}} \ge 2\dR(t_n) + 2\, \|\dR\| \ge \max\{ 0,\dR(t_n)\}.
\]
Hence $\dr_-$ is the only solution of \eqref{def_pt0} which may represent the radial velocity of a bouncing solution leaving the boundary. Moreover $\dr_-=-\dr_+ + 2\dR(t_n)$, hence $\dr_+$ and $\dr_-$ can be interpreted as the radial velocity before and after the bounce respectively.
\end{Rem}

\begin{Rem}\label{rem-vecchi-lavori}
For completeness we show that the map $P$ defined in Proposition \ref{def_imp_map} corresponds to the map $\cal{M}$ considered in \cite{sylvie1} with a different choice of variables. The variables used in \cite{sylvie1} are $(t,I)$, and the map ${\cal M}:(t_0,I_0) \mapsto (t_1,I_1)$ is implicitly given by
\[
\left\{
\begin{array}{l}
I_1 = -I_0 - 2\, R_1\, \dR(t_1) + \frac{c^2+I_0^2}{R_0^2}\, (t_1-t_0)\\[0.2cm]
(t_1-t_0) \left( \frac{c^2+I_0^2}{R_0^2}\, (t_1-t_0) - 2I_0 \right) = R_1^2-R_0^2
\end{array}
\right. 
\]
with $I_n = - R_n \dr(t_n^+)$ in our notations. The second equation is obtained by
\[
\frac{c^2+I_0^2}{R_0^2} = \frac{c^2}{R_0^2} + \dr^2(t_0^+) = A(t_0,t_1)
\]
and using \eqref{def-AB-finale} and \eqref{dr-tn+} for $\dr(t_0^+)$.
On the other hand, from Proposition \ref{prop-finale-twist} and \eqref{def_imp}
\[
\dr(t_{1}^+) = \dR(t_1) - \sqrt{\dR^2(t_1) -2\partial_2h(t_{0},t_1) - \frac{c^2}{R_1^2}}
\]
and, arguing as in the proof of Proposition \ref{proph-twist}, it holds
\[
\partial_2 h(t_{0},t_1) + \frac{c^2}{2R_1^2} = \dR(t_1)\, \frac{ R_1^2+\sqrt{R_{0}^2\, R_{1}^2-c^2(t_{1}-t_{0})^2}}{R_1\, (t_{1}-t_{0})} - \frac 12\, \left(\frac{ R_1^2+\sqrt{R_{0}^2\, R_{1}^2-c^2(t_{1}-t_{0})^2}}{R_1\, (t_{1}-t_{0})} \right)^2\, .
\]
Therefore, using
\[
\frac{ R_1^2+\sqrt{R_{0}^2\, R_{1}^2-c^2(t_{1}-t_{0})^2}}{R_1\, (t_{1}-t_{0})} > \frac{2\, R_1\, \|\dR\|}{\underline{R}} \ge 2\, \|\dR\| >\dR(t_1)
\]
for $(t_1-t_{0})<\sigma$, we have
\begin{equation} \label{dr-tn+-prima}
\dr(t_{1}^+) = \dR(t_1) - \left(\frac{ R_1^2+\sqrt{R_{0}^2\, R_{1}^2-c^2(t_{1}-t_{0})^2}}{R_1\, (t_{1}-t_{0})} - \dR(t_1)\right) = 2\, \dR(t_1) - \frac{ R_1^2+\sqrt{R_{0}^2\, R_{1}^2-c^2(t_{1}-t_{0})^2}}{R_1\, (t_{1}-t_{0})}\, . 
\end{equation}
Using \eqref{dr-tn+-prima} we get
\[
I_1 = - R_1 \dr(t_1^+) = -2 R_1\, \dR(t_1) + \frac{R_1^2+\sqrt{R_0^2 R_1^2 - c^2(t_1-t_0)^2}}{t_1-t_0}
\]
and again by \eqref{dr-tn+} for $\dr(t_0^+)$, we conclude
\[
I_1+I_0 = - 2 R_1\, \dR(t_1) + \frac{R_0^2+R_1^2+2\sqrt{R_0^2 R_1^2 - c^2(t_1-t_0)^2}}{t_1-t_0} =  - 2 R_1\, \dR(t_1) + (t_1-t_0)\, A(t_0,t_1)
\]
which is the first equation.
\end{Rem}

\section{Periodic and quasi-periodic orbits}\label{sec:per}

In this section we give the proof of Theorem \ref{Main1}. We begin with a preliminary result for the billiard map. Let $\eps\in (0,1)$ be a fixed parameter as in Definition \ref{def_R}.

\begin{Pro}\label{Mather_billiard}
  Suppose that $\sigma>2$ and fix $c\in \left(0,\eps\, \frac{\underline{R}^2}{\sigma}\right)$. Then, for every $1<\omega<\sigma-1$ 
  \begin{itemize}
      \item if $\omega=p/q\in\QQ$, then there exists a minimal orbit $(t_n,K_n)_{n\in\ZZ}$ of angular momentum $c$ of the billiard map such that $(t_{n+q},K_{n+q})=(t_n+p,K_n)$; 
      \item if $\omega\in\RR\setminus\QQ$, then there exists a minimal invariant set $M_\omega$ of rotation number $\omega$ made of orbits of angular momentum $c$. Moreover, $M_\omega$ is the graph of a Lipschitz function $u:\pi(M_\omega)\rightarrow\RR$, and $M_\omega$ is either an invariant curve or a Cantor set.
        \end{itemize}
  \end{Pro}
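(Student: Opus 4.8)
The plan is to reduce to the Aubry--Mather theory recalled in Appendix \ref{sec:am}, applied to a globally defined extension of the generating function $h$. By Propositions \ref{proph-twist} and \ref{def_imp_map}, on the strip $\Omega=\{0<t_1-t_0<\sigma\}$ the function $h$ is $C^2$, $1$-periodic in the sense $h(t_0+1,t_1+1)=h(t_0,t_1)$, and satisfies $\partial_{12}h<0$ with $\partial_{12}h\to-\infty$ and $\partial_1 h\to+\infty$ as $t_1-t_0\to0^+$; moreover, since $c<\eps\,\underline R^2/\sigma$, every pair $t_0<t_1$ with $t_1-t_0<\sigma$ satisfies \eqref{hp_dirsol1}--\eqref{hp_dirsol3}. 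The obstruction to applying Aubry--Mather directly is that $h$ lives only on $\Omega$, so the twist map $P$ of Proposition \ref{def_imp_map} is not a complete twist map of the cylinder.

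Fix $\omega\in(1,\sigma-1)$ (this interval is non-empty since $\sigma>2$) and pick $a,b$ with $0<a<\omega-1$ and $\omega+1<b<\sigma$. Following the constructions in \cite{kunzeortega,maro5}, I would extend $h$ to $\tilde h\in C^2(\RR^2)$ with: $\tilde h(t_0+1,t_1+1)=\tilde h(t_0,t_1)$; $\tilde h\equiv h$ on the strip $\{a\le t_1-t_0\le b\}$; $\partial_{12}\tilde h<0$ everywhere, with $-\partial_{12}\tilde h$ bounded above and bounded away from $0$ on $\{t_1-t_0\le a\}\cup\{t_1-t_0\ge b\}$; and $\tilde h$ superlinear, say $\tilde h(t_0,t_1)\ge\nu(t_1-t_0)^2$ for $|t_1-t_0|$ large. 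Such a $\tilde h$ is the generating function of an exact symplectic monotone twist diffeomorphism $\tilde P$ of $\TT\times\RR$, to which the Aubry--Mather theorems of Appendix \ref{sec:am} apply.

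For $\omega=p/q\in\QQ$ one then gets a minimal periodic configuration $(t_n)_{n\in\ZZ}$ with $t_{n+q}=t_n+p$; for $\omega$ irrational one gets a minimal set $\tilde M_\omega$ of rotation number $\omega$ which is the graph of a Lipschitz function over a closed subset of $\TT$ and is either an invariant curve or a Cantor set. The decisive point is the a priori bound for minimal configurations, $|t_{n+m}-t_n-m\omega|\le1$ for all $n,m\in\ZZ$, which for $m=1$ gives $t_{n+1}-t_n\in[\omega-1,\omega+1]\subset(a,b)$. Hence the whole configuration lies in the region where $\tilde h\equiv h$: so $(t_n)$ solves the Euler--Lagrange equations for $h$, all consecutive differences lie in $(0,\sigma)$, and, setting $K_n:=\partial_1 h(t_n,t_{n+1})$, the orbit $(t_n,K_n)$ is an orbit of $\tilde P$ along which $\tilde P$ and $P$ coincide.

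Finally, by Propositions \ref{prima-per-h} and \ref{prop-finale-twist}, since $(t_{n+1}-t_n)$ satisfies \eqref{hp_dirsol1}--\eqref{hp_dirsol3}, the sequence $(t_n,\dr(t_n^+))$ with $\dr(t_n^+)$ given by \eqref{def_rdot} is a bouncing solution with angular momentum $c$; thus $(t_n,K_n)$ is the desired orbit of the billiard map. Minimality, the rotation number, the periodicity $(t_{n+q},K_{n+q})=(t_n+p,K_n)$ in the rational case, and the Lipschitz-graph, invariant-curve-or-Cantor-set structure of $M_\omega$ in the irrational case are inherited directly from the corresponding properties of $\tilde P$, since $\tilde P\equiv P$ on a neighborhood of $M_\omega$. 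I expect the main obstacle to be the extension step together with the verification that the a priori bound confines minimal orbits to $\{a\le t_1-t_0\le b\}$ --- this is exactly where the restriction $1<\omega<\sigma-1$ enters --- while keeping $\partial_{12}\tilde h<0$ globally and $\tilde h$ superlinear is more bookkeeping than difficulty.
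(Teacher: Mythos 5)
Your proposal is correct and follows essentially the same route as the paper: restrict to a closed substrip $\{a\le t_1-t_0\le b\}$ (the paper's $\Omega_\beta$ with $a=\beta$, $b=\sigma-\beta$), extend $h$ to a global generating function via the extension lemma, apply Theorem \ref{Mather}, and use the a priori estimate \eqref{stim-omega} together with $1<\omega<\sigma-1$ to confine the minimal configurations to the region where the extension coincides with $h$, so that they are genuine orbits of the billiard map by Propositions \ref{prima-per-h} and \ref{prop-finale-twist}. The only cosmetic difference is that the paper packages the extension as Lemma \ref{extension2} (with the quadratic tail $\frac{1}{2}(t_1-t_0)^2$ supplying the superlinearity and uniform twist you ask for) rather than constructing it ad hoc.
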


\begin{proof}
 Fix $\sigma>2$ and $c\in \left(0,\eps\, \frac{\underline{R}^2}{\sigma}\right)$. The function $h$ defined in \eqref{fungen-finale} is, by Proposition \ref{proph-twist}, a generating function when restricted to the set $\Omega = \{(t_0,t_1)\in\RR^2 \: :\: 0<t_1-t_0<\sigma \}$. 
Choose $\omega$ such that $1<\omega<\sigma-1$. Fix a positive number $\beta<\min\{\omega-1,\sigma-\omega-1\}$. By compactness there exists $\delta$ such that 
\[
h_{12}\leq\delta<0\quad\mbox{on } \Omega_\beta = \{(t_0,t_1)\in\RR^2 \: :\: \beta\leq t_1-t_0\leq\sigma-\beta \}.
\]
Hence, we can apply Lemma \ref{extension2} and find a generating function $\tilde{h}$ that coincide with $h$ on $\Omega_\beta$ and satisfies the hypothesis of Theorem \ref{Mather}. The function $\tilde{h}$ generates a diffeomorphism $\tilde{P}$ that coincide with the billiard map on some strip. Applying Theorem \ref{Mather}, for every $\tilde{\omega}\in\RR$ we find the periodic orbits and the invariant sets $M_{\tilde{\omega}}$ described in Theorem \ref{Mather}. These sets are made of orbits $(t_n,K_n)_{n\in\ZZ}$ of the diffeomorphism $\tilde{P}$ and become orbits of the billiard map if
\[
(t_n,t_{n+1})\in\Omega_\beta \quad \mbox{for every }n\in\ZZ.
\]
For $\tilde{\omega}=\omega$, by \eqref{stim-omega} we have that
\[
|t_{n+1}-t_n-\omega|\leq 1
\]
that implies, since $\omega>1$, that for every $n\in\ZZ$
\[
0<\omega-1\leq t_{n+1}-t_n\leq \omega+ 1.
\]
By the choice of $\beta$, for every $n\in\ZZ$,
  \[
\beta <\omega-1\leq t_{n+1}-t_n\leq \omega+ 1 < \sigma-\beta  
\]
that is $(t_n,t_{n+1})\in\Omega_\beta$ for every $n\in\ZZ$.
\end{proof}

Then Corollary \ref{MatherCor} immediately implies

\begin{Cor} \label{coro-subito}
   For each $1<\omega<\sigma-1$ there exist two functions $\phi,\eta:\RR\rightarrow\RR$ such that for every $\xi\in\mathbb{R}$
  \begin{align}
    &\phi(\xi+1) = \phi(\xi)+1, \quad \eta(\xi+1)=\eta(\xi), \label{non-serve}\\
    &S(\phi(\xi),\eta(\xi))=(\phi(\xi+\omega),\eta(\xi+\omega))\label{2mathercor}
  \end{align}
    where $\phi$ is monotone (strictly if $\omega\in\RR\setminus\mathbb{Q}$ ) and $\eta$ is of bounded variation.
  \end{Cor}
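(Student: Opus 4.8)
The plan is to deduce Corollary \ref{coro-subito} directly from the Aubry--Mather machinery already invoked in the proof of Proposition \ref{Mather_billiard}, by transcribing the abstract conjugacy statement (Corollary \ref{MatherCor} in the appendix) into the variables of the billiard map. Concretely, I would fix $\omega$ with $1<\omega<\sigma-1$ and the auxiliary parameter $\beta<\min\{\omega-1,\sigma-\omega-1\}$ exactly as in the proof of Proposition \ref{Mather_billiard}; then the modified generating function $\tilde h$ obtained from Lemma \ref{extension2} generates a diffeomorphism $\tilde P$ of the cylinder that agrees with the billiard map $P$ on the relevant strip $\Omega_\beta$, and Theorem \ref{Mather}/Corollary \ref{MatherCor} applies to $\tilde P$. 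This yields, for the chosen $\omega$, two functions $\phi,\eta:\RR\to\RR$ with $\phi(\xi+1)=\phi(\xi)+1$, $\eta(\xi+1)=\eta(\xi)$, $\phi$ monotone (strictly monotone in the irrational case) and $\eta$ of bounded variation, such that the pair $(\phi(\xi),\eta(\xi))$ is advanced by $\tilde P$ to $(\phi(\xi+\omega),\eta(\xi+\omega))$.

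The one genuine point to check is that this conjugacy relation, which a priori holds for $\tilde P$, actually holds for the billiard map $S=P$. First I would note that by construction the image of the orbit data produced by Theorem \ref{Mather} consists of orbits $(t_n,K_n)_{n\in\ZZ}$ with consecutive differences $t_{n+1}-t_n$ controlled by the estimate \eqref{stim-omega}, namely $|t_{n+1}-t_n-\omega|\le 1$; hence $\beta<\omega-1\le t_{n+1}-t_n\le\omega+1<\sigma-\beta$, so every pair of consecutive bouncing times lies in $\Omega_\beta$. On $\Omega_\beta$ the diffeomorphisms $\tilde P$ and $P$ coincide, so these orbits are genuine orbits of the billiard map. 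In the Aubry--Mather parametrization the orbit through the ``phase'' $\xi$ has $n$-th point $(\phi(\xi+n\omega),\eta(\xi+n\omega))$, and the fact that all these points have $t$-coordinate $\phi(\xi+n\omega)$ with $\phi(\xi+(n+1)\omega)-\phi(\xi+n\omega)\in[\omega-1,\omega+1]\subset(\beta,\sigma-\beta)$ is precisely what guarantees we stay inside the strip where $S$ and $\tilde P$ agree. Therefore \eqref{2mathercor} with $S$ in place of $\tilde P$ holds, which is the statement.

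The main obstacle, if any, is purely bookkeeping: making sure the abstract hull functions of Corollary \ref{MatherCor} are applied with the correct normalization (the translation behavior \eqref{non-serve} and the regularity of $\phi,\eta$) and that the range-confinement argument of Proposition \ref{Mather_billiard}, which was stated there for a single orbit, is valid uniformly in $\xi$ — but this is immediate because the bound $|t_{n+1}-t_n-\omega|\le 1$ coming from \eqref{stim-omega} is a property of every Aubry--Mather orbit of rotation number $\omega$, hence holds simultaneously for the whole family parametrized by $\xi$. So the corollary follows ``immediately,'' as the text asserts, once one observes that the confinement to $\Omega_\beta$ is $\xi$-uniform and that on $\Omega_\beta$ the auxiliary map $\tilde P$ is literally the billiard map $S$.
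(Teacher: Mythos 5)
Your proposal is correct and follows essentially the same route as the paper: the paper derives the corollary as an immediate consequence of Corollary \ref{MatherCor} applied to the extended map $\tilde P$ constructed in the proof of Proposition \ref{Mather_billiard}, with the confinement to $\Omega_\beta$ (via the estimate \eqref{stim-omega}, which holds for every minimal orbit of rotation number $\omega$ and hence uniformly over the hull parameter $\xi$) guaranteeing that the conjugacy relation holds for the billiard map itself. Your explicit verification of the $\xi$-uniformity of the strip confinement is exactly the point the paper leaves implicit.
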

Let us now come to the
\begin{proof}[Proof of Theorem \ref{Main1}]
  Fix $c\in \left(0,\eps\, \frac{\underline{R}^2}{\sigma}\right)$. Consider $\omega\in\RR\setminus\QQ$ and the corresponding functions $\phi,\eta:\RR\rightarrow\RR$ given by Corollary \ref{coro-subito}.
  Denote by
  \[
x_\xi(t)=(r(t),\theta(t))_\xi 
  \]
  the bouncing solution with angular momentum $c$ which satisfies 
  \[
  r(\phi(\xi))=R(\phi(\xi))\, ,\quad \dr(\phi(\xi))= \dR(\phi(\xi)) - \sqrt{\dR^2(\phi(\xi)) - 2\eta(\xi) - \frac{c^2}{R_{\phi(\xi)}^2}}\, .
  \] 
Since the system is rotationally invariant, it is clear that fixing $c$, $r(\phi(\xi))$ and $\dr(\phi(\xi))$, is sufficient to uniquely determine the bouncing solution up to rotations. The value of $\theta(\phi(\xi))$ can be chosen freely. By the periodicity of $\phi,\eta$,
  \[
 (r(t),\theta(t))_{\xi+1} =(r(t-1),\theta(t-1))_\xi.
  \]
Finally, using \eqref{2mathercor} and  Proposition \ref{prop-finale-twist} we have
  \[  
(r(t),\theta(t))_{\xi+\omega}=  (r(t),\theta(t))_{\xi}.
\]
The last part of the statement follows from the definition of rotation number of a minimal orbit.
\end{proof}

\section{Chaotic motions} \label{sec:chaos}
In this section we prove the existence of chaotic motion for the billiard map inside the breathing circle $\DD_t$ with function $R(t)\in \CB$. In particular we prove the following version of Theorem \ref{Main2}.

\begin{The}\label{Main2prime}
Suppose that $R(t)\in\CB$. Then there exists $c_0>0$ such that for every $c\in(0,c_0)$ the map $P$ defined in Proposition \ref{def_imp_map} has positive topological entropy. More precisely, for every $c\in(0,c_0)$ there exists many $P$-invariant probability measures with positive metric entropy.   
  \end{The}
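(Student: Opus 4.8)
The plan is to deduce chaotic dynamics from the failure of existence of invariant curves for certain irrational rotation numbers, via the variational criterion: on an invariant curve the minimal orbits have non-negative second variation of the action, so if we can exhibit a region of phase space where the second variation of the action (built from the generating function $h$) is negative along every orbit segment passing through it, then no invariant curve can cross that region. By the Aubry--Mather/Mather--Forni machinery (quoted from \cite{angenent1,angenent2,forni} and Appendix \ref{sec:am}), the absence of invariant curves with a rotation number in some interval forces the existence of a Birkhoff instability region, hence positive topological entropy and, by the construction of approximate orbits shadowing Aubry--Mather sets of different rotation numbers, invariant probability measures with positive metric entropy. So the real content is purely local: produce a window in $\Omega$ where the action is strictly concave along orbits.

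\medskip
\textbf{Step 1: reduce to the second variation of $h$.} An orbit $(t_n)_{n\in\ZZ}$ of $\tilde P$ is a stationary configuration of the formal action $\sum_n h(t_n,t_{n+1})$, and it is minimal (hence can lie on an invariant curve) only if the second variation $\sum_{n}\big(\partial_{11}h(t_n,t_{n+1})+\partial_{22}h(t_{n-1},t_n)\big)\xi_n^2 + 2\sum_n \partial_{12}h(t_n,t_{n+1})\xi_n\xi_{n+1}\ge 0$ for all finitely supported variations $(\xi_n)$. Testing with a single nonzero $\xi_n=1$ and all others zero shows that along a minimal orbit one must have $\partial_{22}h(t_{n-1},t_n)+\partial_{11}h(t_n,t_{n+1})\ge 0$ for every $n$. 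Thus if we locate $(t_0,\bar t)$ and $(\bar t, t_1)$ inside $\Omega$ (both increments in the admissible strip and, to keep everything genuinely inside the billiard regime, both in the range where $P$ is defined and $c$ is small) such that $\partial_{22}h(t_0,\bar t)+\partial_{11}h(\bar t, t_1)<0$ for \emph{all} choices of $t_0$ with $\bar t-t_0$ in an appropriate subinterval and all $t_1$ with $t_1-\bar t$ in an appropriate subinterval, then no minimal orbit can have an impact time near $\bar t$, so no invariant curve whose orbits impact near $\bar t$ can exist.

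\medskip
\textbf{Step 2: compute $\partial_{11}h$ and $\partial_{22}h$ and estimate them at $\bar t$.} Using Proposition \ref{forma-h}, \eqref{der1-h} gives $\partial_{t_0}h$; differentiating once more in $t_0$ produces $\partial_{11}h$, and by periodicity/symmetry $\partial_{22}h(t_0,t_1)$ has the analogous form with $R_1,\dR(t_1),\ddR(t_1)$ in place of $R_0,\dR(t_0),\ddR(t_0)$. Letting $c\to 0$ and $\tau:=t_1-t_0$ range over a bounded interval, the leading behaviour is $\partial_{11}h \approx \tfrac{R_0^2}{\tau^3}\big(\text{something}\big) + \dR(t_0)\cdot(\cdots) + \ddR(t_0)\cdot\big(R_0/\tau + \cdots\big)$, and similarly for $\partial_{22}h$. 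At $t_0=\bar t$ (or $t_1=\bar t$) we invoke condition (iii) of Definition \ref{def_R}: $\dR(\bar t)=0$ kills the first-derivative terms, while $\ddR(\bar t)$ is sufficiently negative. The conditions $\sigma>4$ and inequality (ii) — which sandwiches a quantity between $3$ and $-1+\sqrt{2\underline R^2/(\cdots)}$ — are exactly what is needed so that there exist subintervals of admissible increments $\tau',\tau''$ (think $\tau',\tau''$ roughly of size between $3$ and $\sigma-1$, compatible with $\Omega_\beta$) for which the positive $R^2/\tau^3$ terms are dominated by the negative contribution of $\ddR(\bar t)$. This is the calculation the class $\CB$ was reverse-engineered to make work; I would organize it as: (a) write $\Psi(t_0,t_1):=\partial_{22}h(t_0,\bar t)|_{t_1=\bar t} + \partial_{11}h(\bar t, t_1)$, (b) bound each piece using $\underline R\le R\le\overline R$, $|\dR|\le\|\dR\|$, and (iii), (c) show $\Psi<0$ on the relevant rectangle of increments by plugging (ii).

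\medskip
\textbf{Step 3: from no invariant curve to entropy and measures.} Having shown that, for all $c\in(0,c_0)$ with $c_0$ chosen so small that every estimate above survives the $c\to 0$ limit, there is an interval $(\omega_1,\omega_2)\subset(1,\sigma-1)$ of irrational rotation numbers with no invariant curve of that rotation number (the impact times of orbits of rotation number $\omega$ near $(\omega_1,\omega_2)$ necessarily pass near the forbidden value $\bar t$ — here one uses that the minimal sets $M_\omega$ from Proposition \ref{Mather_billiard} sweep out a region of the cylinder as $\omega$ varies, and monotone dependence of the impact times on $\omega$), one applies the converse-KAM/instability results: between two Aubry--Mather sets $M_{\omega_1}$, $M_{\omega_2}$ not separated by an invariant curve there is a Birkhoff region of instability, whence Mather's connecting orbits and the standard symbolic-dynamics construction (as in \cite{angenent1,angenent2,forni}, or \cite{maro5}) yield a compact invariant set on which $P$ is semiconjugate to a subshift of finite type, giving positive topological entropy; the corresponding shift-invariant measures push forward to $P$-invariant probability measures of positive metric entropy. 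I expect the main obstacle to be Step 2 — specifically, checking that the two-sided inequality (ii) together with $\sigma>4$ really does force $\Psi<0$ on a full rectangle of admissible increments rather than just at one point, since the $\partial_{11}h$ and $\partial_{22}h$ terms each blow up like $\tau^{-3}$ and must be controlled uniformly; a secondary subtlety is making precise the claim in Step 3 that non-existence of invariant curves persists on a whole interval of rotation numbers and that this interval sits inside the admissible range $(1,\sigma-1)$, which is where the gap between $3$ and the upper bound in (ii) is used.
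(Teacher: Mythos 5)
Your proposal follows essentially the same route as the paper: the paper also uses Mather's second-variation criterion (that $\partial_{22}h(t_{n-1},t_n)+\partial_{11}h(t_n,t_{n+1})>0$ along orbits on an invariant curve, Proposition \ref{Mather_prop_a}), shows this quantity is negative at the time $\bar t$ where $\dR(\bar t)=0$ and $\ddR(\bar t)$ is very negative in the small-$c$ limit (Lemmas \ref{stim_inv}--\ref{a_negative} and Proposition \ref{prop_crash}, with the increments parametrised by $\bar K$ rather than by your rectangle of increments), and then concludes via Forni's theorem (Theorem \ref{Theo_forni}) that the resulting Cantor Aubry--Mather sets carry nearby invariant measures of positive metric entropy, supported where the extended map coincides with $P$. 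Your Step 3 detour through Birkhoff instability regions and symbolic dynamics is not needed; otherwise the argument matches the paper's.
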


The idea of the proof is the following. First we extend $P$ to the whole cylinder as in the proof of Proposition \ref{Mather_billiard}. The key point is then to prove that there exists an open interval $\mathcal{I}\subset \RR$ such that for sufficiently small values of $c$, the extended map has no invariant curve with rotation number $\omega\in \mathcal{I}$. Hence, for irrational $\omega\in \mathcal{I}$ the Mather sets $M_\omega$ of Theorem \ref{Mather} are Cantor sets. Then Theorem \ref{Theo_forni} guarantees the existence of invariant probability measures with positive metric entropy for the extended map. The final step is to show that the extension has been made in such a way that these invariant measures are supported in the zone of the cylinder where the extended map coincide with $P$.

Let us first state and prove a series of technical lemmas. Let $R\in\CB$ and recall by Definition \ref{def_R} that $\sigma>4$. Moreover in this section we use the notations $\dR_{\bar t}$ and $\ddR_{\bar t}$ for $\dR(\bar t)$ and $\ddR(\bar t)$ respectively. Let us consider the set
\[
\Xi_R := \left\{ \omega\in (3,\sigma-1) \: : \: \frac{2\overline{R}^2}{\sigma^2}<\frac{2 \underline{R}^2}{(\omega+1)^2}- \| \dR\| \frac{2 \overline{R}}{\omega+1}<\frac{2\overline{R}^2}{(\omega-1)^2}
+\| \dR \|\frac{2\overline{R}}{\omega-1}
  <-\ddR_{\bar{t}}\underline{R}  \right\}.
\]

\begin{Lem}\label{stim_r}
If $R(t)\in \CB$ the set $\Xi_R$ is not empty and contains an open interval $\mathcal{I}$. 
\end{Lem}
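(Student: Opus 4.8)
The goal is to show that $\Xi_R$ is non-empty and open; since it is a sublevel/superlevel set defined by inequalities that depend continuously (indeed smoothly) on $\omega$, showing it is open is immediate once non-emptiness is settled — the defining conditions are strict inequalities among continuous functions of $\omega$ on the open interval $(3,\sigma-1)$, so $\Xi_R$ is automatically open, and I only need to produce one $\omega_0\in(3,\sigma-1)$ satisfying all of them, then take $\mathcal{I}$ to be a small neighbourhood. So the whole content is in exhibiting such an $\omega_0$ using the three defining properties (i), (ii), (iii) of the class $\CB$ from Definition \ref{def_R}.

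\textbf{Construction of $\omega_0$.} First I would record the meaning of the chain of inequalities defining $\Xi_R$. Setting
\[
f(\omega) := \frac{2\underline{R}^2}{(\omega+1)^2} - \|\dR\|\,\frac{2\overline{R}}{\omega+1}, \qquad g(\omega) := \frac{2\overline{R}^2}{(\omega-1)^2} + \|\dR\|\,\frac{2\overline{R}}{\omega-1},
\]
the set $\Xi_R$ is $\{\omega\in(3,\sigma-1) : \tfrac{2\overline{R}^2}{\sigma^2} < f(\omega) < g(\omega) < -\ddR_{\bar t}\underline{R}\}$. Note $f$ is decreasing and $g$ is decreasing in $\omega$ on the relevant range, and for $\omega>3$ one has $\omega+1>\omega-1$, so $f(\omega)<\tfrac{2\underline{R}^2}{(\omega-1)^2}$, while of course $\tfrac{2\underline{R}^2}{(\omega-1)^2}\le\tfrac{2\overline{R}^2}{(\omega-1)^2}<g(\omega)$; hence the middle inequality $f(\omega)<g(\omega)$ holds automatically for every $\omega>3$. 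So only the two outer inequalities
\[
\frac{2\overline{R}^2}{\sigma^2} < f(\omega) \qquad\text{and}\qquad g(\omega) < -\ddR_{\bar t}\underline{R}
\]
need to be arranged. The first is equivalent (solving the quadratic in $\tfrac{1}{\omega+1}$, using $\|\dR\|\ge 0$ and completing the square as in the manipulations in Definition \ref{def_R}(ii)) to an upper bound $\omega+1 < -1 + \sqrt{\tfrac{2\underline{R}^2}{2\overline{R}^2/\sigma^2 + \|\dR\|\overline{R}}}$, i.e. $\omega < -2 + \sqrt{\tfrac{2\underline{R}^2}{2\overline{R}^2/\sigma^2 + \|\dR\|\overline{R}}}$; the second is equivalent, using $(\ddR_{\bar t}\underline{R}+\|\dR\|\overline{R})<0$ so the right side is positive, to a lower bound $\omega - 1 > 1 + \sqrt{\tfrac{2\overline{R}^2}{-\ddR_{\bar t}\underline{R} - \|\dR\|\overline{R}}}$, i.e. $\omega > 2 + \sqrt{\tfrac{2\overline{R}^2}{-\ddR_{\bar t}\underline{R} - \|\dR\|\overline{R}}}$. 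Thus $\Xi_R$ is exactly the set of $\omega\in(3,\sigma-1)$ in the open interval
\[
\left(\,2 + \sqrt{\frac{2\overline{R}^2}{-\ddR_{\bar t}\underline{R} - \|\dR\|\overline{R}}}\; ,\; -2 + \sqrt{\frac{2\underline{R}^2}{\tfrac{2\overline{R}^2}{\sigma^2} + \|\dR\|\overline{R}}}\,\right),
\]
and condition (ii) of Definition \ref{def_R} says precisely that the lower endpoint is $<3-1+1\cdot$(hmm) — concretely (ii) reads $3 < 1 + \sqrt{2\overline{R}^2/(-\ddR_{\bar t}\underline{R}-\|\dR\|\overline{R})} < -1 + \sqrt{2\underline{R}^2/(2\overline{R}^2/\sigma^2 + \|\dR\|\overline{R})}$, which shifting by $2$ says $5 < $ lower endpoint $+2 <$ upper endpoint; so certainly the two endpoints above bracket a non-degenerate interval and, since the lower endpoint exceeds $3$... wait, (ii) gives lower endpoint $= 2+\sqrt{\cdots} > 2+2 = 4 > 3$, and we must also check the upper endpoint is $\le \sigma-1$ or at least that the intersection with $(3,\sigma-1)$ is non-empty — here I would use $\sigma>4$ together with (ii) and, if needed, compare the upper endpoint $-2+\sqrt{2\underline{R}^2/(2\overline{R}^2/\sigma^2+\|\dR\|\overline{R})}$ with $\sigma-1$; note $2\underline{R}^2/(2\overline{R}^2/\sigma^2) = \sigma^2\underline{R}^2/\overline{R}^2 \le \sigma^2$, so the upper endpoint is $\le -2+\sigma$, comfortably below $\sigma-1$, so the whole interval $\left(2+\sqrt{\cdots},\,-2+\sqrt{\cdots}\right)$ sits inside $(3,\sigma-1)$ once its lower endpoint exceeds $3$, which (ii) guarantees. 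Pick $\omega_0$ to be its midpoint.

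\textbf{Main obstacle and conclusion.} The only real work is the bookkeeping that turns the two outer inequalities into the clean endpoint form and checking that condition (ii) is exactly what makes that interval non-empty and contained in $(3,\sigma-1)$ — in particular being careful that $-\ddR_{\bar t}\underline{R}-\|\dR\|\overline{R}>0$ (true by the hypothesis preceding (ii)) so that the square roots are real and the inequalities can be inverted without sign reversal, and that $g(\omega)<-\ddR_{\bar t}\underline{R}$ was correctly matched with the quadratic $\tfrac{2\overline{R}^2}{(\omega-1)^2}+\|\dR\|\tfrac{2\overline{R}}{\omega-1}<-\ddR_{\bar t}\underline{R}$. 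Once the interval is identified and shown non-degenerate and $\subset(3,\sigma-1)$, openness of $\Xi_R$ is trivial (it is an open interval), so any open subinterval $\mathcal{I}$ works; I would take $\mathcal{I}$ to be, say, the open middle third of that interval. I do not expect condition (iii) to be needed for this lemma — it will be used later for the sign of the second variation — though it is worth noting in passing that (iii) is consistent with (ii) via the definition of $\sigma$.
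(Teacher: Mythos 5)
Your overall strategy matches the paper's: openness of $\Xi_R$ is automatic because its defining inequalities are strict and continuous in $\omega$, the middle inequality $f(\omega)<g(\omega)$ is free for $\omega>3$, condition (iii) is indeed not needed, and the whole content is to read off from condition (ii) of Definition \ref{def_R} a nonempty subinterval of $(3,\sigma-1)$ on which the two outer inequalities hold. However, your reduction of those outer inequalities contains an off-by-one error that breaks the non-emptiness argument. Write $T:=\sqrt{2\overline{R}^2/(-\ddR(\bar t)\underline{R}-\|\dR\|\overline{R})}$ and $U:=\sqrt{2\underline{R}^2/(2\overline{R}^2/\sigma^2+\|\dR\|\overline{R})}$, so that (ii) reads $3<1+T<-1+U$, i.e. $T>2$ and $U>T+2$. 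The inequalities $g(\omega)<-\ddR(\bar t)\underline{R}$ and $f(\omega)>2\overline{R}^2/\sigma^2$ are \emph{not} equivalent to $\omega-1>1+T$ and $\omega+1<-1+U$: solving the quadratics in $1/(\omega\mp1)$ exactly gives thresholds of the form $\bigl(-D+\sqrt{D^2+2\overline{R}^2C}\bigr)/C$ with $D=\|\dR\|\overline{R}$, not $1+T$ or $-1+U$. What is true — and what the paper uses — is that for $\omega>3$ one has $\|\dR\|\tfrac{2\overline{R}}{\omega-1}<\|\dR\|\overline{R}$ and $\|\dR\|\tfrac{2\overline{R}}{\omega+1}<\|\dR\|\overline{R}$, so the two outer inequalities already follow from $(\omega-1)^2>T^2$ and $(\omega+1)^2<U^2$, i.e. from $1+T<\omega<-1+U$, which is exactly the interval that (ii) guarantees to be nonempty and which sits inside $(3,\sigma-1)$ since $U<\sigma\underline{R}/\overline{R}\le\sigma$.

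Your version instead requires $2+T<\omega<-2+U$, and that interval is nonempty only if $U>T+4$, whereas Definition \ref{def_R}(ii) only provides $U>T+2$. Nothing in the definition of $\CB$ rules out $T+2<U\le T+4$, so your candidate interval may be empty for some $R\in\CB$ and the proof does not go through as written. (Your individual conditions are still \emph{sufficient} for membership in $\Xi_R$ — they are strictly stronger than the correct ones — so the error is purely in the non-emptiness step, not in the verification of the inequalities.) The fix is to drop the extra unit shift on each side: the $\pm1$ shifts appearing in (ii) already account for the passage between $\omega$ and $\omega\mp1$, so take $\mathcal{I}$ to be any open subinterval of $(1+T,-1+U)$ and verify the outer inequalities via the estimate $\|\dR\|\tfrac{2\overline{R}}{\omega\mp1}<\|\dR\|\overline{R}$ rather than by claiming an exact solution of the quadratics.
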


  \begin{proof}
    Let us first note that
    \[
\frac{2\underline{R}^2}{\frac{2\overline{R}^2}{\sigma^2}+\| \dR\| \overline{R}} < \frac{\underline{R}^2}{\overline{R}^2}\sigma^2< \sigma^2
    \]
from which, by conditions (i) and (ii) of Definition \ref{def_R}, there exist $3<\omega^-<\omega^+<\sigma-1$ such that every $\omega\in(\omega^-,\omega^+)$ satisfies
    \begin{equation*}
1+\sqrt{\frac{2\overline{R}^2}{-\ddR_{\bar{t}}\underline{R}-\| \dR\| \overline{R}}}<\omega<-1+\sqrt{\frac{2\underline{R}^2}{\frac{2\overline{R}^2}{\sigma^2}+\| \dR\| \overline{R}}}
    \end{equation*}
    or, equivalently,
    \begin{equation}\label{cond_magic}
(\omega-1)^2>\frac{2\overline{R}^2}{-\ddR_{\bar{t}}\underline{R}-\| \dR\| \overline{R}} \quad\mbox{and}\quad (\omega+1)^2<\frac{2\underline{R}^2}{\frac{2\overline{R}^2}{\sigma^2}+\| \dR\| \overline{R}}.        
      \end{equation}
  Since $\omega>3$, using the first of \eqref{cond_magic}
  \[
\frac{2\overline{R}^2}{(\omega-1)^2}
+\| \dR \|\frac{2\overline{R}}{\omega-1}<\frac{2\overline{R}^2}{(\omega-1)^2}+\| \dR\| \overline{R}
  <-\ddR_{\bar{t}}\underline{R}
  \]
  that proves the third inequality in the definition of the set $\Xi_R$. Analogously, since $\omega>1$, using the second of \eqref{cond_magic}
  \[
\frac{2 \underline{R}^2}{(\omega+1)^2}- \| \dR\| \frac{2 \overline{R}}{\omega+1} > \frac{2 \underline{R}^2}{(\omega+1)^2}- \| \dR\|\overline{R}   >\frac{2\overline{R}^2}{\sigma^2}.
  \]
  that proves the first inequality in the definition of the set $\Xi_R$. The second inequality can be easily proved.  
 \end{proof}

\begin{Lem}\label{stim_inv}
    Let $c\in \left(0,\eps\, \frac{\underline{R}^2}{\sigma}\right)$ and $\mathcal{I}=(\omega^-,\omega^+)$ be the interval defined in Lemma \ref{stim_r}.  Let $\Gamma=\{(t,\gamma(t)) \: :\: t\in\TT \}$ be an invariant curve of the billiard map with rotation number $\omega\in\mathcal{I}$. Then
    \[
  \KK^-(\omega)+o(c)  \leq \gamma(t) \leq \KK^+(\omega)+o(c) \, ,
    \]
where
    \[
\KK^-(\omega)=\frac{2\underline{R}^2}{(\omega+1)^2}
 -\| \dR \|\frac{2\overline{R}}{\omega+1}
  ,
    \qquad    \KK^+(\omega)=\frac{2\overline{R}^2}{(\omega-1)^2}
+\| \dR \|\frac{2\overline{R}}{\omega-1}
\]
and $o(c)$ represents a function depending on $R,\omega,c,t_0,t_1$ that tends to zero uniformly for $\omega\in\mathcal{I}$ as $c\to 0^+$.
    \end{Lem}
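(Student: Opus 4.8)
The plan is to read off $\gamma(t)$ directly from the explicit formula \eqref{der1-h} for $\partial_1 h$, using that along an invariant curve of rotation number $\omega$ the gap between two consecutive bouncing times is confined to the window $(\omega-1,\omega+1)$, and then to let $c\to 0^+$.

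First I would record the geometric constraint. Fix $t_0\in\TT$ and let $t_1$ be the first coordinate of the image of $(t_0,\gamma(t_0))$ on $\Gamma$. The orbits lying on $\Gamma$ are minimal of rotation number $\omega$, so estimate \eqref{stim-omega} gives $|t_1-t_0-\omega|\le 1$; since $\omega\in\mathcal{I}\subset(3,\sigma-1)$ this forces $\omega-1\le t_1-t_0\le\omega+1$, hence $2<t_1-t_0<\sigma$, so $(t_0,t_1)\in\Omega$ and the generating function $h$ of Proposition \ref{proph-twist} applies. By Proposition \ref{def_imp_map}, on $\Gamma$ one has $\gamma(t_0)=K_0=\partial_1 h(t_0,t_1)$, so writing $D:=t_1-t_0$, $R_0:=R(t_0)$, $R_1:=R(t_1)$ and
\[
u:=\frac{R_0^2+\sqrt{R_0^2R_1^2-c^2D^2}}{R_0\,D},\qquad g(u):=\tfrac12 u^2+\dR(t_0)\,u,
\]
formula \eqref{der1-h} reads $\gamma(t_0)=\dfrac{c^2}{2R_0^2}+g(u)$.

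Next comes the expansion in $c$. Since $D\ge\omega-1>2$ stays bounded away from $0$ uniformly for $\omega\in\mathcal{I}$ and $c$ ranges over the bounded interval $(0,\eps\underline{R}^2/\sigma)$, we have $\sqrt{R_0^2R_1^2-c^2D^2}=R_0R_1+O(c^2)$ and hence $u=u_0+O(c^2)$ with $u_0:=\dfrac{R_0+R_1}{D}$, all the $O(\cdot)$ uniform in $\omega\in\mathcal{I}$; since $u$ and $\dR(t_0)$ are bounded, $g(u)=g(u_0)+O(c^2)$ and therefore $\gamma(t_0)=g(u_0)+o(c)$ uniformly in $\omega$. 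It then remains to estimate $g(u_0)$. From $\underline{R}\le R_0,R_1\le\overline{R}$ and $\omega-1\le D\le\omega+1$ we get $\dfrac{2\underline{R}}{\omega+1}\le u_0\le\dfrac{2\overline{R}}{\omega-1}$; moreover $g$ is a parabola with vertex at $-\dR(t_0)$, and $|{-\dR(t_0)}|\le\|\dR\|<\dfrac{2\underline{R}}{\omega+1}$ (the last inequality because $\|\dR\|\le\underline{R}/(2\sigma)$ and $\omega+1<\sigma$), so $g$ is increasing on $\big[\tfrac{2\underline{R}}{\omega+1},\tfrac{2\overline{R}}{\omega-1}\big]$. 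Consequently
\[
g(u_0)\ \ge\ g\!\left(\tfrac{2\underline{R}}{\omega+1}\right)=\frac{2\underline{R}^2}{(\omega+1)^2}+\dR(t_0)\,\frac{2\underline{R}}{\omega+1}\ \ge\ \frac{2\underline{R}^2}{(\omega+1)^2}-\|\dR\|\,\frac{2\overline{R}}{\omega+1}=\KK^-(\omega),
\]
\[
g(u_0)\ \le\ g\!\left(\tfrac{2\overline{R}}{\omega-1}\right)=\frac{2\overline{R}^2}{(\omega-1)^2}+\dR(t_0)\,\frac{2\overline{R}}{\omega-1}\ \le\ \frac{2\overline{R}^2}{(\omega-1)^2}+\|\dR\|\,\frac{2\overline{R}}{\omega-1}=\KK^+(\omega),
\]
and combining with $\gamma(t_0)=g(u_0)+o(c)$ yields the claim.

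I expect the only delicate points to be, first, the displacement bound $|t_1-t_0-\omega|\le 1$, for which I rely on the minimality of orbits on invariant curves together with \eqref{stim-omega} (equivalently, Birkhoff's theorem for twist maps); and second, the uniformity of the remainder $o(c)$ in $\omega\in\mathcal{I}$ — this is precisely what the strict bound $\omega^->3$ buys, since it keeps $D=t_1-t_0$ in a fixed compact subinterval of $(0,\infty)$, making the expansions of $\sqrt{R_0^2R_1^2-c^2D^2}$ and of $u$ uniform. Everything else is the elementary monotonicity bookkeeping for the quadratic $g$, which only uses the definition of $\sigma$ and $\underline{R}\le\overline{R}$.
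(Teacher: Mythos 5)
Your proof is correct and follows essentially the same route as the paper's: both extract $K_0=\partial_1 h(t_0,t_1)$ from \eqref{der1-h}, use \eqref{stim-omega} to pin $t_1-t_0$ in $[\omega-1,\omega+1]\subset(2,\sigma)$, isolate the $c$-dependence into a uniformly bounded $O(c^2)$ correction, and reduce the bound to the quadratic $\tfrac12 u_0^2+\dR(t_0)u_0$ evaluated on $u_0=\frac{R_0+R_1}{t_1-t_0}\in\big[\tfrac{2\underline{R}}{\omega+1},\tfrac{2\overline{R}}{\omega-1}\big]$. Your explicit monotonicity check for $g$ (via $\|\dR\|<\tfrac{2\underline{R}}{\omega+1}$, which follows from $\sigma\le\underline{R}/(2\|\dR\|)$ and $\omega+1<\sigma$) is a step the paper leaves implicit when it inverts the quadratic for $t_1-t_0$, so no substantive difference.
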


\begin{proof}
    Let $(t_n,K_n)$ be an orbit of the billiard map with rotation number $\omega$ on the invariant curve $\Gamma$. From (\ref{def_imp}) and (\ref{der1-h}) a direct computation gives for the point $(t_0,K_0)$ of the orbit 
    \begin{align}\label{kazero}
      K_0 = \frac{1}{2}\left(\frac{R_0+R_1}{t_1-t_0} \right)^2+\dR(t_0) \left(\frac{R_0+R_1}{t_1-t_0} \right) - c^2 f(t_1,t_0,c)
     \end{align}
    where
\[
    f(t_1,t_0,c)= \frac{1}{2R_0}+\frac{R_0+\dR(t_0)(t_1-t_0)}{R_0^2R_1+R_0\sqrt{R_0^2 R_1^2-c^2(t_1-t_0)^2}}
\]
    can be bounded by a constant depending on $R$ and $c$ but not on $\omega$. Actually, from \eqref{stim-omega} and the fact that $\omega\in\mathcal{I}$,
\begin{equation}\label{usare-qui}
2<\omega^--1<\omega-1 <t_1(t_0,K_0,c)-t_0<\omega+1<\omega^++1
\end{equation}
so that $|t_1(t_0,K_0,c)-t_0|$ is uniformly bounded on every invariant curve with rotation number $\omega\in\mathcal{I}$ for $c$ fixed. Solving \eqref{kazero} for $(t_1-t_0)$ we get
\[
t_1-t_0=\frac{R_0+R_1}{\sqrt{\dR^2(t_0)+2(K_0+c^2f(t_1,t_0,c))}-\dR(t_0)}
\]
that used in \eqref{usare-qui} gives 
\[
\KK^-(\omega) \leq K_0+c^2f(t_1,t_0,c) \leq \KK^+(\omega)\, .
\]
Since this argument applies to all points of $\Gamma$ the proof is finished.
\end{proof}

\begin{Lem}\label{formula_a}
Let $\mathcal{I}=(\omega^-,\omega^+)$ be the interval defined in Lemma \ref{stim_r}. Suppose that there exists an invariant curve $\Gamma$ of the billiard map with rotation number $\omega\in\mathcal{I}$. Consider a point $(\bar{t},\bar{K})\in\Gamma$ such that
  \[
  \dR_{\bar{t}}=0.
  \]
 Let $t_1=t_1(\bar{t},K,c)$ and $t_{-1}=t_{-1}(\bar{t},K,c)$, and consider the function  
\[
a_c(\bar{t},K) := a(t_{-1}(\bar{t},K,c),\bar{t},t_1(\bar{t},K,c))
\]
with the notation given in Proposition \ref{Mather_prop_a}. Then
\[
a_c(\bar{t},K) = 2\sqrt{2K}\left(\ddR_{\bar{t}}+K\left(\frac{1}{R_0+R_{\bar{t}}}+\frac{1}{R_2+R_{\bar{t}}} \right) \right)+o(c),
\]
where $o(c)$ represents a function depending on $R,\omega,c,t_0,t_1$ that tends to zero uniformly for $\omega\in\mathcal{I}$ as $c\to 0$.
\end{Lem}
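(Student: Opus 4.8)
The idea is to express $a_c(\bar t,K)$ through the generating function $h$ and then expand as $c\to 0$. Recall from Proposition~\ref{Mather_prop_a} that $a(t_0,t_1,t_2)=\partial_{22}h(t_0,t_1)+\partial_{11}h(t_1,t_2)$ is the diagonal coefficient of the second variation of the action; hence, writing $t_{\pm1}=t_{\pm1}(\bar t,K,c)$,
\[
a_c(\bar t,K)=\partial_{22}h(t_{-1},\bar t)+\partial_{11}h(\bar t,t_1)\, .
\]
To compute the two second derivatives I would start from the explicit formula \eqref{der1-h} for $\partial_1h$ and from its companion for $\partial_2h$ in Remark~\ref{rem-vecchi-lavori}. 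Setting
\[
g(t_0,t_1):=\frac{R_0^2+\sqrt{R_0^2R_1^2-c^2(t_1-t_0)^2}}{R_0(t_1-t_0)}\, ,\qquad \tilde g(t_0,t_1):=\frac{R_1^2+\sqrt{R_0^2R_1^2-c^2(t_1-t_0)^2}}{R_1(t_1-t_0)}\, ,
\]
these read $\partial_1h=\frac{c^2}{2R_0^2}+\frac12g^2+\dR(t_0)g$ and $\partial_2h=-\frac{c^2}{2R_1^2}-\frac12\tilde g^2+\dR(t_1)\tilde g$. Differentiating once more in the relevant argument and using the hypothesis $\dR_{\bar t}=0$ to annihilate the terms coming from $\partial_{t_0}(c^2/2R_0^2)$ and $\dR(t_0)\partial_{t_0}g$ (respectively their analogues), one is left with
\[
\partial_{11}h(\bar t,t_1)=\big(g\,\partial_{t_0}g+\ddR_{\bar t}\,g\big)\big|_{(\bar t,t_1)}\, ,\qquad \partial_{22}h(t_{-1},\bar t)=\big(\ddR_{\bar t}\,\tilde g-\tilde g\,\partial_{t_1}\tilde g\big)\big|_{(t_{-1},\bar t)}\, .
\]

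Next I would run the asymptotics $c\to 0^+$. Since $\omega\in\mathcal I\subset(3,\sigma-1)$, the bound \eqref{usare-qui}, applied both to the orbit point $(\bar t,K)$ and to its preimage $(t_{-1},K_{-1})$, shows that $\tau_+:=t_1-\bar t$ and $\tau_-:=\bar t-t_{-1}$ lie in $[\omega^--1,\omega^++1]$, hence are bounded away from $0$ and from $\infty$ uniformly in $\omega\in\mathcal I$; moreover $h$ is an even function of $c$, so $t_{\pm1}(\bar t,K,c)$, $\tau_\pm$ and $R(t_{\pm1})$ differ from their $c=0$ values by $O(c^2)$. On this region $g(\bar t,t_1)=\frac{R_{\bar t}+R(t_1)}{\tau_+}+O(c^2)$ and $\partial_{t_0}g(\bar t,t_1)=\frac{R_{\bar t}+R(t_1)}{\tau_+^2}+O(c^2)$, with the analogous expansions for $\tilde g$ and $\partial_{t_1}\tilde g$ at $(t_{-1},\bar t)$. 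To trade $\tau_\pm$ for $K$ I would use that $(\bar t,K)$ is an orbit point: from $K=\partial_1h(\bar t,t_1)$, \eqref{kazero} and $\dR_{\bar t}=0$ one gets $\frac{R_{\bar t}+R(t_1)}{\tau_+}=\sqrt{2K}+O(c^2)$, while from $K=-\partial_2h(t_{-1},\bar t)$ one gets $\tilde g(t_{-1},\bar t)=\sqrt{2K}+O(c^2)$ and $\frac{R_{\bar t}+R(t_{-1})}{\tau_-}=\sqrt{2K}+O(c^2)$; in particular $\tau_+^{-1}=\frac{\sqrt{2K}}{R_{\bar t}+R(t_1)}+O(c^2)$ and similarly for $\tau_-^{-1}$. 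Substituting into the two identities above yields $\partial_{11}h(\bar t,t_1)=\frac{(2K)^{3/2}}{R_{\bar t}+R_2}+\ddR_{\bar t}\sqrt{2K}+O(c^2)$ and $\partial_{22}h(t_{-1},\bar t)=\frac{(2K)^{3/2}}{R_{\bar t}+R_0}+\ddR_{\bar t}\sqrt{2K}+O(c^2)$, where $R_0=R(t_{-1})$ and $R_2=R(t_1)$; adding them and using $(2K)^{3/2}=2K\sqrt{2K}$ produces exactly the claimed expression, the $O(c^2)$ error being absorbed into $o(c)$.

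I expect the only genuinely delicate point to be the uniform control of the remainders. One must verify that the $O(c^2)$ errors in $g$, $\tilde g$ and in their $t$-derivatives are truly $O(c^2)$ and uniform over $\omega\in\mathcal I$; this follows because $\sqrt{R_0^2R_1^2-c^2(t_1-t_0)^2}-R_0R_1$ is, together with its first derivatives, $O(c^2)$ on the compact set where $\underline R\le R\le\overline R$ and $\tau_\pm\in[\omega^--1,\omega^++1]$, and because $K$ stays in a fixed compact subinterval of $(0,+\infty)$ — on an invariant curve with $\omega\in\mathcal I$ this is ensured by Lemma~\ref{stim_inv} together with $\KK^-(\omega)>0$, which is built into the definition of $\Xi_R$. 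Everything else reduces to a direct differentiation.
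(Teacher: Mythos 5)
Your proposal is correct and follows essentially the same route as the paper: differentiate the explicit formulas for $\partial_1 h$ and $\partial_2 h$, use $\dR_{\bar t}=0$ to kill the extra terms, trade $t_1-\bar t$ and $\bar t-t_{-1}$ for $\sqrt{2K}$ via the implicit relations $K=\partial_1h(\bar t,t_1)=-\partial_2h(t_{-1},\bar t)$, and control the remainders uniformly in $\omega\in\mathcal I$ using \eqref{stim-omega} and Lemma \ref{stim_inv}. The only immaterial difference is that the paper writes out the full second derivatives $\partial_{11}h$, $\partial_{22}h$ and combines them via the bounce relation before specializing to $\dR_{\bar t}=0$, whereas you discard the vanishing terms at the outset.
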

\begin{proof}
  Let $h(t,s)$ be the function defined in \eqref{fungen-finale} on the the strip $\Omega = \{(t,s)\in\RR^2 \: :\: 0<t-s<\sigma \}$, with $\eps$, $\sigma$ and $c$ given as in Proposition \ref{proph-twist}. Computations show that
\[
\begin{aligned}
\partial_{11} h (t,s) = &\, \ddR(t)\, \frac{R_t^2 + \sqrt{R_t^2 R_s^2 - c^2(s-t)^2}}{R_t\, (s-t)} + \frac{\dR^2(t)}{s-t} \left( 1 + \frac{c^2(s-t)^2}{R_t^2\sqrt{R_t^2 R_s^2 - c^2(s-t)^2}}\right) +\\[0.2cm]
& \, + 2\, \frac{R_t\, \dR(t)}{(s-t)^2}\, \left(1+ \frac{R_t^2 R_s^2}{R_t^2\sqrt{R_t^2 R_s^2 - c^2(s-t)^2}}\right) + \frac{A(t,s)}{s-t} + \frac{c^2(s-t)^2}{(s-t)^3\sqrt{R_t^2 R_s^2 - c^2(s-t)^2}}\\[0.4cm]
\partial_{22} h (t,s) = &\, \ddR(s)\, \frac{R_s^2 + \sqrt{R_t^2 R_s^2 - c^2(s-t)^2}}{R_s\, (s-t)} + \frac{\dR^2(s)}{s-t} \left( 1 + \frac{c^2(s-t)^2}{R_s^2\sqrt{R_t^2 R_s^2 - c^2(s-t)^2}}\right) +\\[0.2cm]
& \, - 2\, \frac{R_s\, \dR(s)}{(s-t)^2}\, \left(1+ \frac{R_t^2 R_s^2}{R_s^2\sqrt{R_t^2 R_s^2 - c^2(s-t)^2}}\right) + \frac{A(t,s)}{s-t} + \frac{c^2(s-t)^2}{(s-t)^3\sqrt{R_t^2 R_s^2 - c^2(s-t)^2}}
\end{aligned}
\]
where $R_t:=R(t)$ and $R_s:=R(s)$, and we recall from \eqref{def-AB-finale} that
\[
A(t,s) = \frac{R_t^2+R_s^2+2\sqrt{R_t^2 R_s^2 - c^2(s-t)^2}}{(s-t)^2}.
\]
Consider the function
\[
(t,K,c)\mapsto\, a_c(t,K) := \partial_{11}h (t,t_1) + \partial_{22}h (t_{-1},t)
\]
then
\[
\begin{aligned}
& a_c(t,K) = \, 2\, \ddR(t) \left(\dR(t)+ \frac{R_t^2+\sqrt{R_t^2 R_{1}^2-c^2(t_1-t)^2}}{R_t\, (t_1-t)} \right)+\\[0.2cm]
& \, + \frac{\dR^2(t)}{t-t_{-1}}\left( 1+ \frac{c^2(t-t_{-1})^2}{R_t^2\sqrt{R_t^2 R_{-1}^2-c^2(t-t_{-1})^2}}\right) +  \frac{\dR^2(t)}{t_1-t}\left( 1+ \frac{c^2(t_1-t)^2}{R_t^2\sqrt{R_t^2 R_{1}^2-c^2(t_1-t)^2}}\right) + \\[0.2cm]
&\, + \frac{A(t_{-1},t)}{t-t_{-1}} + \frac{c^2(t-t_{-1})^2}{(t-t_{-1})^3\sqrt{R_t^2 R_{-1}^2-c^2(t-t_{-1})^2}} + \frac{A(t,t_1)}{t_1-t} + \frac{c^2(t_1-t)^2}{(t_1-t)^3\sqrt{R_t^2 R_{1}^2-c^2(t_1-t)^2}} + \\[0.2cm]
&\, + 2\, \frac{R_t\, \dR(t)}{(t_1-t)^2}\, \left(1+ \frac{R_t^2 R_1^2}{R_t^2\sqrt{R_t^2 R_1^2 - c^2(t_1-t)^2}}\right) - 2\, \frac{R_t\, \dR(t)}{(t-t_{-1})^2}\, \left(1+ \frac{R_t^2 R_{-1}^2}{R_t^2\sqrt{R_t^2 R_{-1}^2 - c^2(t-t_{-1})^2}}\right) 
\end{aligned}
\]
where we have used that
\[
\begin{aligned}
& \frac{R_t^2 + \sqrt{R_t^2 R_{-1}^2 - c^2(t-t_{-1})^2}}{R_t\, (t-t_{-1})} = \dr(t^-;t_{-1},t) = \\[0.2cm]
& = -\dr(t^+;t,t_1)+2\dR(t) = \frac{R_t^2 + \sqrt{R_t^2 R_1^2 - c^2(t_{-1}-t)^2}}{R_t\, (t_{-1}-t)} +2\dR(t).
\end{aligned}
\] 
As in Lemma \ref{stim_inv}, since $t_1(t,K,c)-t$ and $t-t_{-1}(t,K,c)$ are uniformly bounded for $c$ fixed, we can write
\begin{equation*}
    \left\{
    \begin{array}{l}
      K = \frac{1}{2}\left(\frac{R_t+R_1}{t_1-t}\right)^2+\dR_t\frac{R_t+R_1}{t_1-t} +o(c) \\[0.1cm]
      K_{1} = \frac{1}{2}\left(\frac{R_t+R_1}{t_1-t}\right)^2-\dR_1\frac{R_t+R_1}{t_1-t} +o(c)
      \end{array}
    \right.
\end{equation*}
from which
\begin{equation*}
    \left\{
    \begin{array}{l}
      t_1-t = \frac{R_t+R_1}{\sqrt{\dR_t^2+2K}-\dR_t} +o(c) \\[0.1cm]
 t_0-t_{-1} = \frac{R_{-1}+R_t}{\sqrt{\dR_t^2+ 2K}+\dR_t}+o(c).     
      \end{array}
    \right.
\end{equation*}
Using this formulas in the expression of $a_c(t,K)$ for $t=\bar{t}$, since $\dR_{\bar{t}} =0$ we get from standard computations
\[
a_c(\bar{t},K) = 2\sqrt{2K}\left(\ddR_{\bar{t}}+K\left(\frac{1}{R_0+R_{\bar{t}}}+\frac{1}{R_2+R_{\bar{t}}}   \right)  \right) +o(c)
\]
where $o(c)$ is as in Lemma \ref{stim_inv}.
\end{proof}

\begin{Lem}\label{a_negative}
If $\dR_{\bar{t}}=0$ and $ \ddR_{\bar{t}}<-\frac{2\overline{R}^2}{\sigma^2\underline{R}}$ then 
\[
\alpha(\bar{t},K):= 2\sqrt{2K}\left(\ddR_{\bar{t}}+K\left(\frac{1}{R_0+R_{\bar{t}}}+\frac{1}{R_2+R_{\bar{t}}}   \right)  \right)<0
\] 
for every $K\in (\frac{2\overline{R}^2}{\sigma^2},-\ddR_{\bar{t}}\underline{R})$.
\end{Lem}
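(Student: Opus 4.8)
The plan is to peel off the manifestly positive prefactor. On the stated range we have $K>\frac{2\overline{R}^2}{\sigma^2}>0$, so $2\sqrt{2K}$ is well defined and strictly positive, and the sign of $\alpha(\bar t,K)$ is governed entirely by the sign of the bracket
\[
g(K) := \ddR_{\bar t} + K\left(\frac{1}{R_0+R_{\bar t}}+\frac{1}{R_2+R_{\bar t}}\right).
\]
So it suffices to show $g(K)<0$ for every $K\in\left(\frac{2\overline{R}^2}{\sigma^2},\,-\ddR_{\bar t}\underline{R}\right)$.

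For the key estimate I would use that all three radii occurring in $g$ satisfy $R_0,R_2,R_{\bar t}\ge\underline{R}$, hence $R_0+R_{\bar t}\ge 2\underline{R}$ and $R_2+R_{\bar t}\ge 2\underline{R}$, which gives
\[
\frac{1}{R_0+R_{\bar t}}+\frac{1}{R_2+R_{\bar t}}\le\frac{1}{\underline{R}}.
\]
Since $K>0$, this yields $g(K)\le\ddR_{\bar t}+\frac{K}{\underline{R}}$. The map $K\mapsto\ddR_{\bar t}+K/\underline{R}$ is strictly increasing, so on the open interval it stays strictly below its value at the right endpoint $K=-\ddR_{\bar t}\underline{R}$, namely $\ddR_{\bar t}+\frac{-\ddR_{\bar t}\underline{R}}{\underline{R}}=0$. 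Therefore $g(K)<0$, and multiplying back by $2\sqrt{2K}>0$ gives $\alpha(\bar t,K)<0$ throughout the interval.

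There is essentially no obstacle here: the proof is a short chain of inequalities. The only things worth flagging are bookkeeping. First, the hypothesis $\ddR_{\bar t}<-\frac{2\overline{R}^2}{\sigma^2\underline{R}}$ is used precisely to guarantee that the interval $\left(\frac{2\overline{R}^2}{\sigma^2},\,-\ddR_{\bar t}\underline{R}\right)$ is non-empty (it is equivalent to $\frac{2\overline{R}^2}{\sigma^2}<-\ddR_{\bar t}\underline{R}$) and, in particular, that $\ddR_{\bar t}<0$ so that $\sqrt{2K}$ and the right endpoint make sense. Second, the hypothesis $\dR_{\bar t}=0$, though listed, is not needed for this lemma; it is inherited from the setting of Lemma \ref{formula_a} and becomes relevant only when $\alpha(\bar t,K)$ is identified with $a_c(\bar t,K)$ up to $o(c)$. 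Finally, one should note that the estimate is automatically uniform in whatever radii $R_0,R_2$ arise from the orbit through $(\bar t,K)$, since only the bound $R_0,R_2,R_{\bar t}\ge\underline{R}$ was invoked.
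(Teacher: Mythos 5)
Your proof is correct and takes essentially the same route as the paper's: factor out the positive prefactor $2\sqrt{2K}$, bound $\frac{1}{R_0+R_{\bar t}}+\frac{1}{R_2+R_{\bar t}}$ by $\frac{1}{\underline{R}}$, and note that the resulting affine function of $K$ vanishes at the right endpoint $-\ddR_{\bar t}\underline{R}$, with the hypothesis on $\ddR_{\bar t}$ serving only to make the interval non-empty. Your side remarks (that $\dR_{\bar t}=0$ is not actually used, and that the estimate is uniform in $R_0,R_2$) are accurate but inessential.
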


\begin{proof}
First note that
\[
\alpha(\bar{t},K) = 2\sqrt{2K}\left(\ddR_{\bar{t}}+K\left(\frac{1}{R_0+R_{\bar{t}}}+\frac{1}{R_2+R_{\bar{t}}}   \right)  \right)< 2\sqrt{2K}\left(\ddR_{\bar{t}}+\frac{K}{\underline{R}}   \right),
\]
from which we get $\alpha(\bar{t},K)<0$ for every $K\in (0,-\ddR_{\bar{t}}\underline{R})$. Moreover by the hypothesis on $\ddR_{\bar{t}}$ it holds $0<\frac{2\overline{R}^2}{\sigma^2}<-\ddR_{\bar{t}}\underline{R}$.
\end{proof}

We are now ready to extend the map $P$ to the cylinder $\TT\times \RR$. Fix $\sigma>4$. As in the proof of Proposition \ref{Mather_billiard}, for every $c\in (0,\varepsilon\frac{\underline{R}^2}{\sigma})$, the function $h$ defined in \eqref{fungen-finale} is, by Proposition \ref{proph-twist}, a generating function when restricted to the set $\Omega = \{(t_0,t_1)\in\RR^2 \: :\: 0<t_1-t_0<\sigma \}$. 
By Lemma \ref{stim_r}, we can fix $\omega\in\mathcal{I}=(\omega^-,\omega^+)\subset\Xi_R$. Fix a positive number $\beta<\min\{\omega^--1,\sigma-\omega^+-1\}$ and consider the set 
\[
\Omega_\beta = \{(t_0,t_1)\in\RR^2 \: :\: \beta\leq t_1-t_0\leq\sigma-\beta \}.
\]
Hence, we can apply Lemma \ref{extension2} and find a generating function $\tilde{h}$ that coincide with $h$ on $\Omega_\beta$ and satisfies the assumptions of Theorem \ref{Mather}. The function $\tilde{h}$ generates a diffeomorphism $\tilde{P}$ that coincide with the billiard map on some strip. 

The following result is crucial for the proof of Theorem \ref{Main2prime}. 
\begin{Pro}\label{prop_crash}
  Suppose that $R(t)\in\CB$.  
  Then, there exists $c_0$ such that for every $c\in(0,c_0)$ and $\omega\in\mathcal{I}\subset\Xi_R$ the extended map $\tilde{P}$ does not admit any invariant curve with rotation number $\omega$.
  \end{Pro}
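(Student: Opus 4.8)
The plan is to argue by contradiction: assume that for some $c$ (eventually taken small) the extended map $\tilde P$ admits a rotational invariant curve $\Gamma=\{(t,\gamma(t))\,:\,t\in\TT\}$ with rotation number $\omega\in\mathcal{I}\subset\Xi_R$. First I would check that $\Gamma$ is in fact an invariant curve of the billiard map, so that Lemmas \ref{stim_inv}, \ref{formula_a} and \ref{a_negative} become available. Since $\omega\in\mathcal{I}=(\omega^-,\omega^+)\subset(3,\sigma-1)$ and $\beta<\min\{\omega^--1,\sigma-\omega^+-1\}$, the standard estimate \eqref{stim-omega} applied to an orbit $(t_n,K_n)$ of $\tilde P$ on $\Gamma$ gives
\[
\beta<\omega^--1\le\omega-1\le t_{n+1}-t_n\le\omega+1\le\omega^++1<\sigma-\beta\qquad(n\in\ZZ),
\]
so that all these orbits stay in $\Omega_\beta$, where $\tilde h=h$; as in the proof of Proposition \ref{Mather_billiard} this identifies $\Gamma$ with an invariant curve of the billiard map, whose orbits are moreover minimal for the action generated by $h$.

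Next I would pin down where $\Gamma$ sits in the cylinder. After shrinking $\mathcal{I}$ so that $\overline{\mathcal{I}}\subset\Xi_R$, the defining inequalities of $\Xi_R$ together with continuity in $\omega$ yield a constant $d>0$ with
\[
\frac{2\overline{R}^2}{\sigma^2}+d\le\KK^-(\omega)\le\KK^+(\omega)\le-\ddR_{\bar t}\,\underline{R}-d\qquad\text{for all }\omega\in\overline{\mathcal{I}},
\]
where $\bar t$ is the point from condition (iii) in the definition of $\CB$, so that $\dR_{\bar t}=0$ and $\ddR_{\bar t}<-\tfrac{2\overline{R}^2}{\sigma^2\underline{R}}$. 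Writing $\bar K:=\gamma(\bar t)$, Lemma \ref{stim_inv} gives $\KK^-(\omega)+o(c)\le\bar K\le\KK^+(\omega)+o(c)$ with the $o(c)$ uniform in $\omega\in\mathcal{I}$; hence, once $c$ is below a first threshold, $\bar K$ lies in the compact interval
\[
J:=\Big[\min_{\overline{\mathcal{I}}}\KK^-(\omega)-\tfrac{d}{2},\ \max_{\overline{\mathcal{I}}}\KK^+(\omega)+\tfrac{d}{2}\Big]\subset\Big(\frac{2\overline{R}^2}{\sigma^2},\,-\ddR_{\bar t}\,\underline{R}\Big).
\]

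Finally I would reach the contradiction through the second variation. Since $\dR_{\bar t}=0$, Lemma \ref{formula_a} gives $a_c(\bar t,\bar K)=\alpha(\bar t,\bar K)+o(c)$ with $o(c)$ uniform in $\omega\in\mathcal{I}$, $\alpha$ being the function of Lemma \ref{a_negative}. By that lemma $\alpha(\bar t,\cdot)<0$ on the whole interval $\big(\tfrac{2\overline{R}^2}{\sigma^2},-\ddR_{\bar t}\underline{R}\big)$, hence $m:=\max_{K\in J}\alpha(\bar t,K)<0$ by compactness, and choosing $c$ below a second threshold so that the error in $a_c$ is $<|m|/2$ we obtain $a_c(\bar t,\bar K)\le\tfrac{m}{2}<0$. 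On the other hand $a_c(\bar t,\bar K)=\partial_{11}h(\bar t,t_1)+\partial_{22}h(t_{-1},\bar t)$ is the diagonal entry of the second variation of the action along the minimal orbit through $(\bar t,\bar K)$, so it must be $\ge 0$ (Proposition \ref{Mather_prop_a}). This contradiction shows that no invariant curve of $\tilde P$ with rotation number $\omega\in\mathcal{I}$ exists as soon as $c<c_0$, where $c_0$ is the smallest of the finitely many thresholds produced above; since each of them is uniform in $\omega\in\mathcal{I}$, the same $c_0$ works for all $\omega\in\mathcal{I}$, which is the assertion.

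I expect the main difficulty to lie not in this short logical skeleton but in the uniform bookkeeping of the error terms: Lemmas \ref{stim_inv} and \ref{formula_a} must be established with remainders $o(c)$ that are \emph{uniform} in $\omega$ over $\mathcal{I}$, for otherwise one cannot extract a single $c_0$. This uniformity is delicate because the limit $c\to0^+$ is singular for the map $P$ (the strip $\Omega$ and the twist both degenerate), so one has to keep the remainders under control on the whole relevant range of rotation numbers before letting $c\to0^+$; this in turn rests on the explicit generating function \eqref{fungen-finale} and careful estimates of its first and second derivatives.
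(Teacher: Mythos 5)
Your proposal is correct and follows essentially the same route as the paper: contradiction via \eqref{stim-omega} to place the orbit in $\Omega_\beta$ where $\tilde P=P$, Lemma \ref{stim_inv} to trap $\bar K=\gamma(\bar t)$ in $\bigl(\tfrac{2\overline{R}^2}{\sigma^2},-\ddR_{\bar t}\underline{R}\bigr)$, and Lemmas \ref{formula_a}, \ref{a_negative} against Proposition \ref{Mather_prop_a} to get the sign contradiction for small $c$. Your extra care with the compact subinterval $J$ and the uniform margin $d$ is a slight tightening of the paper's bookkeeping, not a different argument.
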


\begin{proof}
Suppose by contradiction that for $c\to 0$, the map $\tilde{P}$ has an invariant curve $\Gamma$ with rotation number $\omega\in\mathcal{I}$. By \eqref{stim-omega}, for every orbit $\{(t_n,K_n)\}$ on $\Gamma$ we have that
\[
|t_{n+1}-t_n-\omega|\leq 1
\]
that implies, since $\omega>\omega^->3$, that for every $n\in\ZZ$
\[
0<\omega-1\leq t_{n+1}-t_n\leq \omega+ 1.
\]
By the choice of $\beta$, for every $n\in\ZZ$,
\[
\beta <\omega^--1<\omega-1\leq t_{n+1}-t_n\leq \omega+ 1 <\omega^++ 1 < \sigma-\beta  
\]
that is $(t_n,t_{n+1})\in\Omega_\beta$ for every $n\in\ZZ$. Hence the dynamics on the invariant curve is given by the billiard map $P$. By condition $(iii)$ in Definition \ref{def_R} there exists  a point $(\bar{t},\bar{K})\in\Gamma$ such that
\[
\dR_{\bar{t}}=0 \quad\mbox{and}\quad \ddR_{\bar{t}}<-\frac{2\overline{R}^2}{\sigma^2\underline{R}}.
\]
By Proposition \ref{Mather_prop_a} and Lemma \ref{formula_a},
\[
a_c(\bar{t},\bar{K}) = \alpha(\bar{t},\bar{K}) +o(c) = 2\sqrt{2\bar{K}}\left(\ddR_{\bar{t}}+\bar{K}\left(\frac{1}{R_0+R_{\bar{t}}}+\frac{1}{R_2+R_{\bar{t}}}   \right)  \right) +o(c)>0.
\]
However, from Lemma \ref{a_negative} we have $\alpha(\bar{t},K)<0$ for every $K\in (\frac{2\overline{R}^2}{\sigma^2},-\ddR_{\bar{t}}\underline{R})$ then, if 
\[
\bar{K}\in \left(\frac{2\overline{R}^2}{\sigma^2},-\ddR_{\bar{t}}\underline{R}\right)
\]
for $c$ sufficiently small, we obtain a contradiction and the proposition is proved. In fact, applying Lemmas \ref{stim_r} and \ref{stim_inv} we find
    \[
\bar{K} \leq \frac{2\overline{R}^2}{(\omega-1)^2}
+\| \dR \|\frac{2\overline{R}}{\omega-1} +o(c) < -\ddR_{\bar{t}}\underline{R} +o(c)
    \]
    and
   \[
  \bar{K} \geq \frac{2\underline{R}^2}{(\omega+1)^2}
 -\| \dR \|\frac{2\overline{R}}{\omega+1}+o(c) > \frac{2\overline{R}^2}{\sigma^2}+o(c),
 \]
where $o(c)$ represents a function that tends to zero for $c\to 0$ uniformly for $\omega\in\mathcal{I}$, and we are done.
\end{proof}

\begin{proof}[Proof of Theorem \ref{Main2prime}]
Consider the extended map $\tilde{P}$ for $c<c_0$, where $c_0$ is given as in Proposition \ref{prop_crash}. For every irrational $\omega\in\mathcal{I}\subset\Xi_R$, the Mather set $M_\omega$ is a Cantor set and there are no invariant curves with rotation number $\omega$. Hence, Theorem \ref{Theo_forni} gives, for every irrational $\omega\in\mathcal{I}$ the existence of a $\tilde{P}$-invariant measure $\mu_\omega$ with positive metric entropy arbitrarily close, in the sense specified in Theorem \ref{Theo_forni}, to the Mather set $M_\omega$. By the choice of the extension, as shown in Proposition \ref{prop_crash}, the Mather sets $M_\omega$ are contained in the zone of the cylinder where $\tilde{P}=P$. Hence there exist measures $\mu_\omega$ which are $P$-invariant.       
  \end{proof}

\appendix

\section{Proof of Proposition \ref{R_not_empty}} \label{technical}
It is clear that for every $k\in\mathbb{N}$, $\delta>0$, the function $R(t)$ is $C^2$, $1$-periodic and positive if $M>2\delta$.   
It is easily seen that
  \begin{equation}\label{Mm}
\underline{R}=M-2\delta\, , \quad \overline{R}=M+2\delta\, , \quad 
\quad \|\dR \|=2\pi\delta( k+1)
  \end{equation}
and
  \begin{equation}\label{Mm2}
\left\| \frac{d^2}{dt^2} R^2 \right\|\leq 8\pi^2\delta[(k^2+1)(M+3\delta)+2k\delta]\, .
  \end{equation}
Moreover, choosing $\bar{t}=\pi/2$,
\[
\dR(\bar{t})=0\, , \quad -\ddR(\bar{t})=4\pi^2\delta(k^2+1)\, . 
\]
Finally it is immediate from the hypothesis that $\delta<1$.

Let us start with the computation of $\sigma$. Using \eqref{Mm} we have that
\[
\sigma = (M-2\delta)\min \left\{ \frac{1}{4\pi\delta (k+1)}\, ,\, \frac{2\, \alpha}{\sqrt{\| \frac{d^2}{dt^2} R^2 \|}} \right\}.
\]
We note that $\| \frac{d^2}{dt^2} R^2 \|\geq 2\underline{R}(-\ddR(\bar{t}))=8\pi^2\delta(M-2\delta)(k^2+1) $. If
\begin{equation}\label{m1}
 M> \max\{ 34 \delta \, ,\,  2\delta + 216 \pi^2 (k+1)^2\}
\end{equation}
 using the fact that $\alpha\in (1,\sqrt{2})$ one can show that
\[
4\pi\delta (k+1)<\frac{\sqrt{8\pi^2\delta(M-2\delta)(k^2+1)}}{4}<   \frac{\sqrt{8\pi^2\delta(M-2\delta)(k^2+1)}}{2\alpha}=\frac{\sqrt{2\underline{R}(-\ddR(\bar{t}))}}{2\alpha}<\frac{\sqrt{\| \frac{d^2}{dt^2} R^2 \|}}{2\alpha}
\]
from which using \eqref{Mm2} and \eqref{m1}
\begin{equation}\label{stim_sigma}
\sigma = \frac{2(M-2\delta)\alpha}{\sqrt{\| \frac{d^2}{dt^2} R^2 \|}}>\frac{2(M-2\delta)\alpha}{\sqrt{8\pi^2\delta[(k^2+1)(M+3\delta)+2k\delta]} }>4\, .
\end{equation}
This gives condition $(i)$.

To prove that condition $(iii)$ holds we note that if
\begin{equation}\label{m3}
  M>2\delta\frac{4\pi^2\delta (k^2+1)+1}{4\pi^2\delta(k^2+1)-1}\, ,
\end{equation}
using that by hypothesis $4\pi^2\delta(k^2+1)-1>0$, we get
\[
-\ddR(\bar{t})=4\pi^2\delta(k^2+1)>\frac{M+2\delta}{M-2\delta}>\frac{2(M+2\delta)}{16(M-2\delta)}>\frac{2\overline{R}}{\sigma^2\underline{R}}
\]
since $\sigma>4$.

Let us now prove condition $(ii)$. By \eqref{m3} and the hypothesis $2\pi\delta (k+1)<1$ it holds
\[
-\ddR(\bar{t})\underline{R}-\| \dR\| \overline{R}=4\pi^2\delta(k^2+1)(M-2\delta)-2\pi\delta (k+1)(M+2\delta)>(M+2\delta)(1-2\pi\delta (k+1))>0.
\]

Since by \eqref{m1}
\[
M>8\pi^2\delta(k^2+1)
\]
then
\[
2<\sqrt{\frac{M}{2\pi^2\delta(k^2+1)}} <   \sqrt{\frac{2(M+2\delta)^2}{4\pi^2\delta (k^2+1)(M-2\delta)-2\pi\delta (k+1)(M+2\delta)}}=\sqrt{\frac{2\overline{R}^2}{-\ddR(\bar{t})\underline{R}-\| \dR\| \overline{R}}}
\]
that gives the first inequality in $(ii)$. To prove the second inequality, we note that the first inequality in \eqref{stim_sigma} gives
\[
\sqrt{\frac{2\underline{R}^2}{\frac{2\overline{R}^2}{\sigma^2}+\| \dR\| \overline{R}}}>  \sqrt{\frac{\alpha^2(M-2\delta)^4}{\pi\delta[2\pi(M+2\delta)^2((k^2+1)(M+3\delta)+2k\delta)+\alpha^2(k+1)(M+2\delta)(M-2\delta)^2]}} 
\]
so that we are done if
\begin{multline*}
1+\sqrt{\frac{2(M+2\delta)^2}{4\pi^2\delta (k^2+1)(M-2\delta)-2\pi\delta (k+1)(M+2\delta)}}  <\\
 -1+ \sqrt{\frac{\alpha^2(M-2\delta)^4}{\pi\delta[2\pi(M+2\delta)^2((k^2+1)(M+3\delta)+2k\delta)+\alpha^2(k+1)(M+2\delta)(M-2\delta)^2]}}
\end{multline*}
Looking at the asymptotic behaviour as $M\to +\infty$ of the left and right hand side of the previous inequality we find that the condition is equivalent to
\[
\sqrt{\frac{1}{\pi\delta[2\pi(k^2+1)-(k+1)]}M} <\sqrt{\frac{\alpha^2}{\pi\delta[2\pi(k^2+1)+\alpha^2(k+1)]}M}.
\]
Hence, the inequality is satisfied for $M$ large enough if
\begin{equation}\label{serve_eps}
\frac{1}{2\pi(k^2+1)-(k+1)}<\frac{\alpha^2}{2\pi(k^2+1)+\alpha^2(k+1)} \, .
\end{equation}
Since 
\[
\frac{1}{2\pi(k^2+1)-(k+1)}< \frac{1}{2\pi(k^2+1)-2\pi(k+1)}
\]
and
\[
\frac{\alpha^2}{2\pi(k^2+1)+2\pi\alpha^2(k+1)} < \frac{\alpha^2}{2\pi(k^2+1)+\alpha^2(k+1)}
\]
\eqref{serve_eps} is implied by
\[
\frac{1}{2\pi(k^2+1)-2\pi(k+1)}<\frac{\alpha^2}{2\pi(k^2+1)+2\pi\alpha^2(k+1)}
\]
or equivalently by
\[
(\alpha^2-1)k^2-2\alpha^2k-\alpha^2-1>0\, .
\]
Since $\alpha\in(1,\sqrt{2})$, it follows that a sufficient condition for \eqref{serve_eps} to hold is
\[
k>\frac{\alpha^2+\sqrt{2\alpha^4-1}}{\alpha^2-1}\, .
\]
This concludes the proof that for any $\eps\in (0,1)$, there exist $k,\delta,M$ such that the function 
\[
R_{k,\delta,M}(t) := M +\delta\sin\left(2\pi k t\right) +\delta\sin(2\pi t) 
\]
is in $\CB$. Moreover, for any 
\[
\eps\in \left(0,\sqrt{1-\frac{1}{(\pi-1)^2}}\right)
\]
we have $\alpha^2\in (\frac{\pi}{\pi-1},2)$, and the previous arguments can be repeated to show that there exist $\delta,M$ such that the function $R_{k,\delta,M}(t)$ is in $\CB$ for $k=1$ (it is enough to check \eqref{serve_eps} with $k=1$).
\qed

\section{Some results of Aubry-Mather theory}\label{sec:am}

In this section we gather the results from Aubry-Mather theory that are used in the paper. For the proofs we refer to  \cite{Aubry,bangert,forni,matherams,matherforni}.

Consider the cylinder $\Aa=\TT\times \RR$ and a strip $\Sigma=\TT\times(a,b)$ with $-\infty\leq a<b\leq +\infty$. Let  $S:\Sigma \rightarrow \Aa$, be a $C^2$-embedding and denote $S(x,y)=(\bar{x},\bar{y})$ and $S^n(x,y)=(x_n,y_n)$.

In the following, we will tacitly consider the lift of $S$ to the universal cover $\RR^2$ of $\Aa$ where $x\in\RR$, $\bar{x}(x+1,y)=\bar{x}(x,y)+1$ and $\bar{y}(x+1,y)=\bar{y}(x,y)$. With some abuse we will use the same notation for $S$ and its lift, and the correct interpretation should be clear from the context.

We suppose that $S$ is exact symplectic and twist. The exact symplectic condition requires the existence of a $C^2$ function $V:\Sigma\rightarrow \RR$ such that
\[
\bar{y} d \bar{x} -y dx = dV(x,y) \quad\mbox{in }\Sigma,
\]
and the (positive) twist condition reads
\[
\frac{\partial \bar{x}}{\partial y}>0 \quad\mbox{in }\Sigma. 
\]
A negative twist condition would give analogous results. If $\Sigma =\Aa$ we also suppose that $S$ preserves the ends of the cylinder that is
\[
\bar{y} \rightarrow \pm\infty \quad\mbox{as }y\rightarrow \pm\infty \mbox{ uniformly in } x,
\]
and twists each ends infinitely that is
\[
\bar{x}-x \rightarrow \pm\infty \quad\mbox{as }y\rightarrow \pm\infty \mbox{ uniformly in } x.
\]

Note that the exact symplectic condition implies that $S$ is orientation preserving and preserves the two-form $dy\wedge dx$.

For this class of maps, the following result is well known \cite{bangert,matherforni}. In the following we denote the partial derivative of $h$ with respect to the $i$-th variable by $h_i$.

\begin{Pro} \label{prop_h}
  Given $\Omega:=\left\{ (x,\bar{x})\in\RR^2 \: :\: \bar{x}(x,a)\leq \bar{x}\leq \bar{x}(x,b)\right\}$, there exists a $C^2$ function $h:\Omega \rightarrow\RR$ such that
  \begin{itemize}
  \item[(i)] $h(x+1,\bar{x}+1) = h(x,\bar{x})$ in $\Omega$,
  \item[(ii)] $h_{12}(x,\bar{x}) <0 $ in $\Omega$,
  \item[(iii)] for $(x,y)\in\Sigma$ we have $S(x,y) = (\bar{x},\bar{y})$ if and only if
    \[
    \left\{
    \begin{aligned}
      h_1(x,\bar{x})&=-y \\
      h_2(x,\bar{x})&=\bar{y}
    \end{aligned}
    \right.
\]
  \end{itemize}
 Conversely, for $\Omega':=\left\{ (x,\bar{x})\in\RR^2 \: :\: a' \leq \bar{x}-x\leq b'   \right\}$ let $h':\Omega' \rightarrow\RR$ be a $C^2$ function such that
  \begin{itemize}
  \item[(i)] $h'(x+1,\bar{x}+1) = h'(x,\bar{x})$ in $\Omega'$,
  \item[(ii)] $h_{12}'(x,\bar{x}) <0 $ in $\Omega'$,
 \end{itemize}   
    then, the equations
      \[
    \left\{
    \begin{aligned}
      h_1'(x,\bar{x})&=-y \\
      h_2'(x,\bar{x})&=\bar{y}
    \end{aligned}
    \right.
    \]
    define implicitly on $\Sigma':=\TT\times(-h_1'(x,\bar{x}+a'),-h_1'(x,\bar{x}+b'))$ a $C^2$ exact symplectic twist embedding $S':\Sigma'\rightarrow \mathbb{A}$.
\end{Pro}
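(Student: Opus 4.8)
The plan is to establish the two directions through the classical generating-function correspondence for twist maps, with the twist condition playing the role of making a natural change of coordinates a diffeomorphism. I would follow the lines of \cite{bangert,matherforni}.

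For the direct statement, I would first exploit the twist inequality $\partial\bar x/\partial y>0$: for each fixed $x$ the map $y\mapsto\bar x(x,y)$ is a strictly increasing $C^2$ function, so $\psi:(x,y)\mapsto(x,\bar x(x,y))$ is a $C^2$ diffeomorphism of $\Sigma$ onto the interior of $\Omega=\{(x,\bar x):\bar x(x,a)\le\bar x\le\bar x(x,b)\}$. When $\Sigma$ is a bounded strip this surjectivity is just the intermediate value theorem; when $\Sigma=\Aa$ it relies on the assumption that $S$ preserves the ends and twists each end infinitely. Writing $y=y(x,\bar x)$, $\bar y=\bar y(x,\bar x)$ for the transported functions, I would then simply set $h:=V\circ\psi^{-1}$ and verify $h_1=-y$, $h_2=\bar y$ by the chain rule together with the identities obtained from differentiating $\bar x(x,y(x,\bar x))\equiv\bar x$ in $x$ and in $\bar x$; this gives (iii). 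Property (ii) is then immediate, since $h_{12}=-\partial y/\partial\bar x=-1/(\partial\bar x/\partial y)<0$ by the inverse function theorem. Property (i) follows because $V$ is defined on the cylinder $\Sigma=\TT\times(a,b)$, hence $1$-periodic in $x$, while the lift relations $\bar x(x+1,y)=\bar x(x,y)+1$ and $\bar y(x+1,y)=\bar y(x,y)$ give $y(x+1,\bar x+1)=y(x,\bar x)$, so that $h(x+1,\bar x+1)=h(x,\bar x)$; this periodicity is precisely where the hypothesis that $S$ be \emph{exact} symplectic (and not merely symplectic, i.e.\ zero flux) is used.

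For the converse, given $h'$ satisfying (i)--(ii) on $\Omega'=\{a'\le\bar x-x\le b'\}$, I would note that (ii) says $\bar x\mapsto-h_1'(x,\bar x)$ is strictly increasing for each $x$, so the equation $y=-h_1'(x,\bar x)$ is solved by the implicit function theorem for a $C^2$ function $\bar x=\bar x(x,y)$ on $\Sigma'=\TT\times(-h_1'(x,\bar x+a'),-h_1'(x,\bar x+b'))$; then $\bar y:=h_2'(x,\bar x(x,y))$ defines $S'$. I would check in turn: $S'$ is injective because $\bar x(x,\cdot)$ is strictly monotone for each $x$ (one recovers $y=-h_1'(x,\bar x)$); $S'$ is an immersion — indeed a short computation using $h_{12}'=h_{21}'$ shows its Jacobian equals $1$ — hence a $C^2$ embedding onto its image; $S'$ is twist, since differentiating $y=-h_1'(x,\bar x(x,y))$ in $y$ gives $\partial\bar x/\partial y=-1/h_{12}'>0$; $S'$ descends to the cylinder because (i) yields $\bar x(x+1,y)=\bar x(x,y)+1$ and $\bar y(x+1,y)=\bar y(x,y)$; and $S'$ is exact symplectic because along $S'$
\[
\bar y\,d\bar x-y\,dx=h_2'(x,\bar x)\,d\bar x+h_1'(x,\bar x)\,dx=d\big(h'(x,\bar x(x,y))\big),
\]
so one may take $V=h'(x,\bar x(x,\cdot))$.

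I expect the only genuinely delicate point to be the \emph{global} character of the coordinate change $\psi$ in the direct statement: injectivity is handed to us directly by the twist condition, but surjectivity onto the full interior of $\Omega$ in the unbounded case $\Sigma=\Aa$ rests essentially on the end-preserving and infinite-twist hypotheses, and one must keep track that $\psi$ and $\psi^{-1}$ are $C^2$. Everything else — the chain-rule verifications of (i)--(iii), the implicit function theorem and the Jacobian computation in the converse — is routine once this is settled.
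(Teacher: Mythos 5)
The paper does not actually prove this proposition: it is stated in Appendix B as a known result quoted from the literature (Bangert, Mather--Forni), so there is no in-paper argument to compare against. Your proposal is the standard generating-function construction from exactly those references, and it is essentially correct: the change of variables $\psi(x,y)=(x,\bar x(x,y))$, the definition $h=V\circ\psi^{-1}$, the chain-rule verification of $h_1=-y$, $h_2=\bar y$, the identity $h_{12}=-(\partial\bar x/\partial y)^{-1}$, and the role of exactness (periodicity of $V$) in giving property (i) are all the right ingredients, as is your correct observation that surjectivity of $\psi$ onto the interior of $\Omega$ in the case $\Sigma=\Aa$ is where the end-preserving and infinite-twist hypotheses enter.

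One small point to tighten in the converse: your justification of injectivity of $S'$ (``because $\bar x(x,\cdot)$ is strictly monotone for each $x$'') only shows that $S'$ is injective on each fibre $\{x\}\times(\cdot)$. To recover $x$ itself from the image point $(\bar x,\bar y)$ you should use the second equation: since $\partial_x h_2'(x,\bar x)=h_{21}'(x,\bar x)<0$, the map $x\mapsto h_2'(x,\bar x)$ is strictly decreasing for fixed $\bar x$, so $\bar y=h_2'(x,\bar x)$ determines $x$ uniquely, and then $y=-h_1'(x,\bar x)$ determines $y$. With that two-line addition (which is the same mechanism the paper uses later, in Proposition \ref{def_imp_map}, to prove injectivity of $P$), the argument is complete; your Jacobian computation, the twist identity $\partial\bar x/\partial y=-1/h_{12}'>0$, the exactness via $V=h'(x,\bar x(x,\cdot))$, and the descent to the cylinder are all correct.
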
 
\begin{Rem}
If $\Sigma=\Aa$ the fact that $S$ preserves and twists each end infinitely implies that $\Omega=\RR^2$.
  The condition $h_{12}(x,\bar{x}) <0 $ is related to the twist condition. Actually, the twist implies that we can write $y = y(x,\bar{x})$ and one gets that
  \[
h_{12}(x,\bar{x}) = -\left(\frac{\partial \bar{x}}{\partial y}(x,y(x,\bar{x}))\right)^{-1}.
  \]
  \end{Rem}
The function $h$ (or $h'$) is called \emph{generating function} and gives an equivalent implicit definition of the diffeomorphism $S$. From this proposition one has that a sequence $(x_n,y_n)_{n\in\ZZ}$ such that $(x_n,y_n)\in\Sigma$ for every $n\in\ZZ$, is an orbit of $S$ if and only if for every $n\in\ZZ$ one has $(x_n,x_{n+1})\in \Omega$ and 
\begin{align}
\label{acca}  &h_2(x_{n-1},x_n) + h_1(x_n,x_{n+1})=0,  \\
  &y_n=-h_1(x_{n},x_{n+1})\nonumber.    
\end{align}

From now on, we consider the case $\Sigma =\Aa$. Actually, the following extension result (see for example \cite{matherforni,maro2}) guarantees that we can always extend an exact symplectic diffeomorphism defined on a strip to one defined on the cylinder.

\begin{Lem}
  \label{extension2}
  Let $h$ be a $C^2$ generating function defined on $\Omega=\left\{ (x,\bar{x})\in\RR^2 \: :\: a \leq \bar{x}-x\leq b   \right\}$ such that $h_{12}\leq\delta<0$ on $\Omega$. Then there exists a generating function $\tilde{h}$ defined on $\RR^2$ such that $h=\tilde{h}$ on $\Omega$ and $\tilde{h}\leq\delta<0$ on $\RR^2$. Moreover, $\tilde{h}=\frac{1}{2}(\bar{x}-x)^2$ on $\RR^2\setminus\Omega_\beta$, being $\Omega_\beta=\left\{ (x,\bar{x})\in\RR^2 \: :\: a-\beta \leq \bar{x}-x\leq b+\beta \right\}$. 
\end{Lem}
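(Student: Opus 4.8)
This is a standard extension result for monotone twist maps; here is the construction I would carry out.

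\textbf{Reduction.} Since $h_{12}\le\delta<0$ also yields $h_{12}\le\max\{\delta,-1\}<0$, we may assume $\delta\ge -1$; then the ``trivial'' generating function $h_0(x,\bar x):=\tfrac12(\bar x-x)^2$, whose mixed derivative is the constant $-1$, satisfies $h_{0,12}\le\delta<0$ too. By periodicity everything is governed by $v:=\bar x-x$, and it suffices to extend $h$ across the inner edge $\{v=b\}$ out to $\{v\ge b+\beta\}$ (where we want $\tilde h=h_0$), the extension below $\{v=a\}$ being identical. Recall $h\in C^2$ up to $\partial\Omega$, so its traces $h$, $h_1$, $h_2$, $h_{11}$, $h_{12}$, $h_{22}$ on $\{v=b\}$ are well defined and $1$-periodic in $x$.

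\textbf{Prescribing the mixed derivative.} A naive convex interpolation $\rho h+(1-\rho)h_0$ fails: its mixed derivative contains the terms $-\rho''(v)(h_0-h)$ and $\rho'(v)\,\partial_v(h_0-h)$, which on a fixed‑width collar need not be small and can overwhelm the sign. Instead I would prescribe $\tilde h_{12}$ directly. Choose a smooth $1$-periodic function $g$ on $\{v\ge b\}$ with $g\le\delta<0$ everywhere, $g=h_{12}$ on $\{v=b\}$, $g\equiv -1$ on $\{v\ge b+\beta\}$, and such that the ``vertical'' and ``horizontal'' integrals of $g$ across the collar $\{b\le v\le b+\beta\}$ are
\[
\int_{x+b}^{x+b+\beta}g(x,\eta)\,d\eta=-(b+\beta)-h_1(x,x+b),\qquad \int_{x}^{x+\beta}g(\xi,x+b+\beta)\,d\xi=h_2(x+\beta,x+b+\beta)-(b+\beta)
\]
for every $x$, together with one scalar normalization fixing the additive constant below. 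Then set, on the simply connected half‑plane $\{v\ge b\}$,
\[
\tilde h_1(x,\bar x):=h_1(x,x+b)+\int_{x+b}^{\bar x}g(x,\eta)\,d\eta,\qquad \tilde h_2(x,\bar x):=h_2(\bar x-b,\bar x)+\int_{\bar x-b}^{x}g(\xi,\bar x)\,d\xi .
\]
A direct check gives $\partial_{\bar x}\tilde h_1=\partial_x\tilde h_2=g$, so $\tilde h_1\,dx+\tilde h_2\,d\bar x$ is closed and integrates to a function $\tilde h$ on $\{v\ge b\}$, normalized by $\tilde h=h$ on $\{v=b\}$ (consistent, since $\tilde h_1=h_1$ and $\tilde h_2=h_2$ there, the integrals vanishing on the edge).

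\textbf{Verification.} Differentiating $x\mapsto\tilde h(x,x+b)$ and $x\mapsto\tilde h_1(x,x+b)$ along $\{v=b\}$ and using Leibniz' rule shows the $2$-jets of $\tilde h$ and $h$ agree on $\{v=b\}$ (the identity $\tilde h_{12}=g=h_{12}$ there being automatic), so the gluing of $\tilde h$ to $h$ across $\{v=b\}$ is $C^2$. The two integral conditions give $\tilde h_1=h_{0,1}$ and $\tilde h_2=h_{0,2}$ on $\{v=b+\beta\}$, the normalization makes $\tilde h=h_0$ there, and since $\tilde h_{12}=g\equiv -1=h_{0,12}$ for $v\ge b+\beta$ the same computation shows the gluing to $h_0$ is $C^2$. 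Finally $\tilde h_{12}=g\le\delta<0$ everywhere, and $\tilde h$ is $1$-periodic because $h_1$, $h_2$, $g$ are. Doing the mirror construction below $\{v=a\}$ produces a $C^2$, $1$-periodic $\tilde h$ on $\RR^2$ with $\tilde h_{12}\le\delta<0$, $\tilde h=h$ on $\Omega$, and $\tilde h=\tfrac12(\bar x-x)^2$ on $\RR^2\setminus\Omega_\beta$; by the converse part of Proposition \ref{prop_h} such a $\tilde h$ generates an exact symplectic twist diffeomorphism of the whole cylinder.

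\textbf{Main obstacle.} The only real work is producing $g$ on the collar meeting $g\le\delta<0$, the two edge matchings, and the prescribed integrals simultaneously. This is possible precisely because only an \emph{upper} bound $\delta$ is imposed on $g$: starting from a convex‑combination base profile interpolating $h_{12}$ and $-1$ (which respects the bound, both endpoints doing so) one corrects the integral defects by adding sufficiently negative bumps supported in the interior of the collar, where extra negativity is harmless; the ``vertical'' and ``horizontal'' constraints decouple in the interior, since a vertical and a horizontal segment of the two families meet only on $\{v=b+\beta\}$, where $g$ is already pinned to $-1$. Extensions of this type are classical — cf.\ the references cited before Lemma \ref{extension2}.
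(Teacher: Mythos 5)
The paper itself does not prove this lemma (it defers to \cite{matherforni,maro2}), so I can only judge your argument on its own terms. Your overall strategy --- prescribe $\tilde h_{12}=g$ on a transition collar, recover $\tilde h_1,\tilde h_2$ by integration, and match $2$-jets on the two edges --- is sensible, and your computation of the two families of integral constraints and of the edge matching is correct. The gap is in the final step, the existence of $g$. First, the claim that the vertical and horizontal constraints ``decouple in the interior'' because the two families of segments ``meet only on $\{v=b+\beta\}$'' is false: the vertical segment $\{x_0\}\times[x_0+b,\,x_0+b+\beta]$ and the horizontal segment at height $\bar x$ meet at $(x_0,\bar x)$ whenever $b\le\bar x-x_0\le b+\beta$, so every point of the collar lies on exactly one segment of each family. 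You are therefore prescribing both marginals of $g$ on the collar, a coupled transportation-type problem; it is solvable only because the necessary compatibility condition $\int_0^1 V(x)\,dx=\int_0^1H(x)\,dx$ happens to hold --- it reduces to $\int_0^1\frac{d}{dx}h(x,x+b)\,dx=0$, i.e.\ to the periodicity of $h$ on the edge --- and you neither notice the coupling nor verify this identity. (The same coupling affects your ``one scalar normalization'', since the additive constant is a double integral of $g$; for the paper's purposes an additive constant in $\tilde h$ is harmless, so that part is minor.)

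Second, and more seriously, correcting the integral defects ``by adding sufficiently negative bumps'' works in one direction only: it can decrease an integral, never increase it. Since $g\le\delta'<0$ pointwise, each vertical integral is at most $\delta'\beta$, whereas the required value is $V(x)=-(b+\beta)-h_1(x,x+b)$, which exceeds $\delta'\beta$ (and can even be positive) whenever $h_1(x,x+b)+b$ is sufficiently negative. Concretely, for $h(x,\bar x)=\tfrac12(\bar x-x)^2+K(\bar x-x)$ with $K>\beta$ there is \emph{no} $C^2$ extension with $\tilde h_{12}<0$ equal to $\tfrac12(\bar x-x)^2$ outside $\Omega_\beta$: monotonicity of $\tilde h_1(x,\cdot)$ would force $-(b+\beta)<-b-K$. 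So the statement is only correct if the collar width $\beta$ is allowed to be large (depending on the traces of $h_1,h_2$ on $\partial\Omega$), and your threshold must be taken strictly above $-1$: with $\delta'=\max\{\delta,-1\}=-1$ the pointwise bound forces every crossing average of $g$ to be $\le-1$, which the constraints can violate for every $\beta$. Replacing $\delta'$ by some $\delta''\in(-1,0)$ with $\delta''\ge\delta$ and taking $\beta$ large enough makes all constraints satisfiable (and is all the paper needs, since only $\tilde h=h$ on $\Omega$ and $\tilde h_{12}\le\delta''<0$ are used later); but as written, with $\beta$ treated as given and the defects corrected only downwards, the construction breaks down.
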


Let $\Sigma=\Aa$, we recall the variational characterisation of the orbits of $S$ in terms of the \emph{action}
\[
H_{\ell k}(x_\ell,\dots,x_k)=\sum_{n=\ell}^{k-1} h(x_n, x_{n+1})\, .
\]
It is well known that solutions of \eqref{acca} (and hence orbits of $S$) are in 1-1 correspondence with stationary points of $H_{\ell k}$ with respect to variations fixing the endpoints $x_\ell,x_k$. In the following we are interested in \emph{minimal orbits}, i.e. orbits $(x_n,y_n)_{n\in\ZZ}$ of $S$ such that for every pair of integers $h<k$ and for every sequence of real numbers $(x^*_n)_{\ell\leq n\leq k}$ such that $x_\ell^*=x_h$ and $x_k^*=x_k$, it holds
\begin{equation*}
 H_{\ell k}(x_\ell,\dots,x_k)    \leq H_{\ell k}(x^*_\ell,\dots,x^*_k).
 \end{equation*}
Moreover we recall that an orbit $(x_n,y_n)_{n\in\ZZ}$ of $S$ has \emph{rotation number} $\omega\in\RR$ if
\[
\lim_{n\to\infty}\frac{x_n}{n}=\omega.
\]
It is well known that minimal orbits are \emph{monotone}, that is only one of the following is satisfied:
\[
x_n<x_{n+1} \mbox{ for every } n\in\ZZ, \quad x_n=x_{n+1} \mbox{ for every } n\in\ZZ, \quad x_n>x_{n+1} \mbox{ for every } n\in\ZZ. 
\]
Moreover, if it has rotation number $\omega$, then it satisfies the following estimate for every $n,m\in\ZZ$:
\begin{equation}\label{stim-omega}
|x_n-x_m-(n-m)\omega|\leq 1.
\end{equation}
Finally we recall that an invariant set of $S$ is said to be minimal and with rotation number $\omega$ if it is made of minimal orbits with rotation number $\omega$, and that the term \emph{invariant curve of $S$} refers to a curve $\Gamma\subset\Sigma$ homotopic to $\{(x,y)\in\Aa \: : \:y=k, \mbox{ for some }k\in\RR  \}$ and such that $S(\Gamma) = \Gamma$. 

The following theorem gives the existence of minimal orbits with rotation number.

\begin{The}[\cite{bangert,matherams}]\label{Mather}
  Let $h:\RR^2\rightarrow\RR$ be a $C^2$ generating function such that
    \begin{itemize}
  \item[(i)] $h(x+1,\bar{x}+1) = h(x,\bar{x})$ in $\RR^2$,
  \item[(ii)] $h_{12}(x,\bar{x}) \leq\delta<0 $ in $\RR^2$
  \end{itemize}
    and let $S$ be the corresponding diffeomorphism. For a fixed $\omega\in\RR$
    \begin{itemize}
      \item if $\omega=p/q\in\QQ$, then there exists a minimal orbit $(x_n,y_n)_{n\in\ZZ}$ of $S$ such that $(x_{n+q},y_{n+q})=(x_n+p,y_n)$ 
      \item if $\omega\in\RR\setminus\QQ$, then there exists a minimal invariant set $M_\omega$ of rotation number $\omega$ such that $M_\omega$ is the graph of a Lipschitz function $u:\pi(M_\omega)\rightarrow\RR$. Moreover, $M_\omega$ is either an invariant curve or a Cantor set.
        \end{itemize}
\end{The}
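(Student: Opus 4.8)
This is the classical Aubry--Mather theorem, so the plan is to reproduce its variational proof, using the characterisation recalled above that orbits of $S$ are the stationary configurations of the action built from $h$ and that we are after the \emph{minimal} ones. The argument proceeds in four stages: construct periodic minimizers for rational $\omega$; prove Aubry's crossing lemma and deduce that minimal configurations are ordered and monotone; pass to a limit to obtain minimal sets for irrational $\omega$; and finally establish the graph property and the invariant-curve/Cantor-set dichotomy.

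For $\omega=p/q\in\QQ$ I would minimise directly. Let $X_{p,q}$ be the space of configurations $(x_n)_{n\in\ZZ}$ with $x_{n+q}=x_n+p$, parametrised by $(x_0,\dots,x_{q-1})\in\RR^q$ and carrying the diagonal $\ZZ$-action $x\mapsto x+1$. On it the periodic action $H_{p,q}(x)=\sum_{n=0}^{q-1}h(x_n,x_{n+1})$ (with $x_q:=x_0+p$) is $C^2$ and $\ZZ$-invariant, hence descends to the quotient; because $h$ grows superlinearly away from the diagonal --- forced in the abstract setting by the end conditions on $S$, and manifest in our application since after Lemma \ref{extension2} one has $\tilde h=\frac{1}{2}(\bar x-x)^2$ outside a compact strip --- the function $H_{p,q}$ is coercive and bounded below on a fundamental domain, so it attains its infimum at some $\bar x$. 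As a critical point, $\bar x$ satisfies $h_2(x_{n-1},x_n)+h_1(x_n,x_{n+1})=0$ for every $n$, hence by Proposition \ref{prop_h} it is an orbit of $S$; by construction it is $(p,q)$-periodic, i.e. $(x_{n+q},y_{n+q})=(x_n+p,y_n)$, and a short comparison (truncate an arbitrary competitor to a periodic one and use $h_{12}<0$) shows it is a minimal configuration.

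Next I would prove Aubry's fundamental lemma: two distinct minimal configurations cross at most once, a crossing being either an index where $x_n=x'_n$ or a strict sign change of $x_n-x'_n$ between consecutive indices. The proof is the classical exchange argument: a double crossing would let one replace the pair $x,x'$ by $n\mapsto\min(x_n,x'_n)$ and $n\mapsto\max(x_n,x'_n)$ without increasing the total action (here $h_{12}\le 0$ suffices) while creating a genuine corner (here strict twist is used), and smoothing that corner strictly decreases the action, contradicting minimality. From this follow the Birkhoff ordering of minimal configurations, their monotonicity, the existence of a rotation number obeying \eqref{stim-omega}, and --- by the same $\min/\max$-plus-corner comparison --- the fact that two minimal configurations with a common rotation number that agree at one index must coincide. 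For irrational $\omega$ I would take $p_k/q_k\to\omega$, normalise the corresponding periodic minimizers so that $x^{(k)}_0\in[0,1)$, and use the ordering together with the a priori bound $|x_{n+1}-x_n|\le|\omega|+1$ from \eqref{stim-omega} (which, via periodicity of $h$, also bounds $y_n=-h_1(x_n,x_{n+1})$) to extract a coordinatewise limit, which is a minimal configuration of rotation number $\omega$. Taking closures of the recurrent such configurations and passing to the cylinder through $(x_0,-h_1(x_0,x_1))$ produces a nonempty, closed, $S$-invariant set $M_\omega$ carrying minimal dynamics.

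Finally, since minimal configurations of rotation number $\omega$ agreeing at one index coincide, the projection $M_\omega\to\TT$, $(x,y)\mapsto x$, is injective, and the a priori bounds above make its inverse Lipschitz, so $M_\omega=\graph u$ with $u:\pi(M_\omega)\to\RR$ Lipschitz. Conjugating by this projection turns $S|_{M_\omega}$ into an orientation-preserving circle homeomorphism restricted to the closed invariant set $\pi(M_\omega)$, on which it acts minimally with irrational rotation number $\omega$; Poincar\'e's classification of minimal sets of circle homeomorphisms then forces $\pi(M_\omega)$ to be either all of $\TT$, so $M_\omega$ is an invariant curve, or a Cantor set. I expect the genuinely non-formal part to be Aubry's crossing lemma and the a priori Lipschitz estimates for minimal orbits: once these are in hand, the coercive minimisation in the periodic case, the diagonal limiting argument, and the circle-map dichotomy are all routine --- and it is exactly to guarantee the needed coercivity that the extension in Lemma \ref{extension2} makes the generating function quadratic off a compact strip.
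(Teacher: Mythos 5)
The paper does not prove Theorem \ref{Mather} at all: it is quoted verbatim from the literature (\cite{bangert,matherams}), so there is no internal proof to compare against. Your sketch is precisely the standard variational argument of those references --- periodic minimizers by coercive minimization (note that coercivity of $h$ already follows from periodicity together with $h_{12}\le\delta<0$, without invoking the quadratic extension), Aubry's crossing lemma, limits along $p_k/q_k\to\omega$, and the Lipschitz-graph plus circle-homeomorphism dichotomy --- and is correct as an outline of that proof.
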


The following corollary gives an equivalent interpretation of the result and has been proven in \cite{mathertop} (see also \cite{maro1}).

\begin{Cor}\label{MatherCor}
 For each $\omega\in \RR$ there exist two functions $\phi,\eta:\RR\rightarrow\RR$ such that for every $\xi\in \RR$
  \begin{align*}
    &\phi(\xi+1) = \phi(\xi)+1, \quad \eta(\xi+1)=\eta(\xi),\\
    &S(\phi(\xi),\eta(\xi))=(\phi(\xi+\omega),\eta(\xi+\omega))
  \end{align*}
  where $\phi$ is monotone (strictly if $\omega\in\RR\setminus\mathbb{Q}$ ) and $\eta$ is of bounded variation.
\end{Cor}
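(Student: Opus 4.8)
The plan is to repackage the minimal orbits and minimal sets produced by Theorem \ref{Mather} into a single parametrization on which $S$ acts as the rigid translation $\xi\mapsto\xi+\omega$, treating the rational and irrational cases separately. In both cases the first component $\phi$ encodes the base dynamics and the second component $\eta$ is recovered from the graph structure of the minimal set.

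First I would dispose of the rational case $\omega=p/q$. Theorem \ref{Mather} yields a minimal orbit $(x_n,y_n)_{n\in\ZZ}$ with $(x_{n+q},y_{n+q})=(x_n+p,y_n)$, and by minimality this orbit is monotone, so $x_0<x_1<\cdots$. Setting $\xi_n:=n\omega$, I define $\phi(\xi_n):=x_n$ and $\eta(\xi_n):=y_n$ on the lattice $\frac1q\ZZ$ and extend both to $\RR$ as right-continuous step functions, constant on each interval $[\xi_n,\xi_{n+1})$. The periodicity relations $x_{n+q}=x_n+p$ and $y_{n+q}=y_n$ translate into $\phi(\xi+1)=\phi(\xi)+1$ and $\eta(\xi+1)=\eta(\xi)$, while $S(x_n,y_n)=(x_{n+1},y_{n+1})$ together with $\xi_{n+1}=\xi_n+\omega$ gives the conjugacy $S(\phi(\xi),\eta(\xi))=(\phi(\xi+\omega),\eta(\xi+\omega))$ on the whole line. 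Here $\phi$ is monotone but only non-strictly (it is a step function), and $\eta$ takes finitely many values per period, hence is of bounded variation.

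The irrational case is the substantive one. Theorem \ref{Mather} gives a minimal set $M_\omega$, the graph of a Lipschitz function $u$ over its projection $\pi(M_\omega)$, with the projected dynamics order-preserving, since minimal orbits are monotone and, by \eqref{stim-omega}, ordered exactly as the orbits of the rotation $R_\omega$. Extending this order-preserving map across the complementary gaps of $\pi(M_\omega)$ produces an orientation-preserving circle homeomorphism $f$ of rotation number $\omega$. Poincar\'e--Denjoy theory then furnishes a continuous, monotone, degree-one semiconjugacy $\pi_\omega$ with $\pi_\omega\circ f=R_\omega\circ\pi_\omega$, which is flat on each gap of $\pi(M_\omega)$. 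I would then define $\phi$ as the monotone pseudo-inverse of $\pi_\omega$, e.g. $\phi(\xi):=\sup\{x:\pi_\omega(x)\le\xi\}$; this map is strictly increasing, satisfies $\phi(\xi+1)=\phi(\xi)+1$, has at most jump discontinuities (one per gap), takes values in $\pi(M_\omega)$, and obeys $\phi(\xi+\omega)=f(\phi(\xi))$. Finally $\eta(\xi):=u(\phi(\xi))$ is periodic, and since $u$ is Lipschitz while $\phi$ is monotone with $\phi(\xi+1)-\phi(\xi)=1$, the total variation of $\eta$ over a period is bounded by the Lipschitz constant of $u$, so $\eta$ is of bounded variation.

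The main obstacle lies in the irrational Cantor case, in two linked points. First, one must confirm that the chosen branch of the pseudo-inverse $\phi$ is genuinely strictly monotone and that its image is precisely $\pi(M_\omega)$: the two endpoints bounding each gap are the one-sided limits of $\phi$ at a jump, and both lie in the closed set $M_\omega$. Second, one must verify that the conjugacy $S(\phi(\xi),\eta(\xi))=(\phi(\xi+\omega),\eta(\xi+\omega))$ persists at the jump parameters. Since $S$ maps $M_\omega$ onto itself preserving the cyclic order, it sends the gap at parameter $\xi$ to the gap at parameter $\xi+\omega$ and matches corresponding endpoints, so the single-valued choice made in the definition of $\phi$ is consistent under $S$. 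Once this consistency is checked the relation holds for all $\xi$, and the invariant-curve subcase falls out as the special case in which $\pi_\omega$ is a homeomorphism and $\phi,\eta$ are continuous. This is in essence the parametrization of Mather \cite{mathertop} (see also \cite{maro1}).
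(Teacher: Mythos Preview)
Your proposal is correct and follows essentially the same route as the paper: the paper does not give its own proof of Corollary~\ref{MatherCor} but simply refers to Mather \cite{mathertop} (and \cite{maro1}), whose construction---semiconjugate the projected dynamics on $\pi(M_\omega)$ to the rigid rotation $R_\omega$, take a monotone pseudo-inverse for $\phi$, and set $\eta=u\circ\phi$ via the Lipschitz graph---is precisely what you have outlined. Your handling of the rational case via step functions and your discussion of the gap endpoints in the Cantor case are the standard details of that argument.
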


For irrational rotation numbers $\omega$, Theorem \ref{Mather} leaves open the possibility for the minimal set $M_\omega$ to be an invariant curve or not. To prove what is the case for a given $\omega$ is of fundamental importance to prove the existence of chaotic motion for the diffeomorphism $S$. We recall the following result by Forni.

Let us fix $\omega\in\RR\setminus\QQ$ and denote by $\sigma_\omega$ the unique $S$-invariant ergodic Borel probability measure supported on $M_\omega$.

\begin{The}[\cite{forni}]\label{Theo_forni}
  Let $S$ be a $C^2$ diffeomorphism of the cylinder $\Aa$ as in Theorem \ref{Mather}. Suppose that $S$ does not admit any invariant curve of rotation number $\omega$. Then there exists an $S$-invariant ergodic Borel probability measure $\mu_\omega$ with positive metric entropy. Moreover, $\mu_\omega$ can be chosen arbitrarily close to $\sigma_\omega$ in the sense of the weak topology on the space of compactly supported Borel probability measures on $\Aa$.  
\end{The}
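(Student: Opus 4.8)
The plan is to prove Forni's theorem by exploiting the gap structure of the Aubry--Mather set $M_\omega$ together with Mather's variational theory of connecting orbits, producing a coded invariant subsystem of positive entropy and then realizing it by an invariant measure concentrated near $M_\omega$. First I would set up the variational picture around $M_\omega$. Since by hypothesis $S$ has no invariant curve of rotation number $\omega$, Theorem \ref{Mather} forces $M_\omega$ to be a Cantor set, so its projection $\pi(M_\omega)\subset\TT$ is a Cantor set whose complement is a countable union of open gaps. The natural bookkeeping device is the Peierls barrier $P_\omega:\RR\to[0,\infty)$, defined as the minimal action-excess of orbits constrained to pass through a prescribed base point; it vanishes exactly on $\pi(M_\omega)$ and is strictly positive on each gap. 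In each gap Mather's theory furnishes two distinguished families of minimal orbits asymptotic to $M_\omega$ in forward and backward time (the advance and delay orbits), which close the gap up dynamically. The decisive consequence of the missing invariant curve is that these asymptotic orbits do not merely foliate a circle: they bound a genuine Birkhoff region of instability straddling the gap, leaving room for transitions across it.

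Second, I would invoke Mather's variational construction of connecting orbits in this region of instability. The point is that, for any prescribed bi-infinite itinerary alternating long stretches of shadowing $M_\omega$ with crossings of the gap, there is an action-minimizing orbit realizing it, obtained by minimizing the action $H_{\ell k}$ over sequences subject to the prescribed window constraints and passing to a limit as $\ell\to-\infty$, $k\to+\infty$. The quantitative input is a gluing estimate: concatenating a segment that shadows $M_\omega$ for $N$ steps with a bounded-action crossing segment perturbs the action by a controlled amount, and each crossing can be performed in at least two topologically inequivalent ways. This yields a compact $S$-invariant set $\Lambda$ together with a coding $\Lambda\to\{0,1\}^{\ZZ}$, one symbol per crossing, semiconjugate to a subshift with positive topological entropy; moreover $\Lambda$ lies in an arbitrarily small neighborhood of $M_\omega$ provided the shadowing stretches $N$ are chosen long.

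Third, from the coded subsystem I would construct the measure. I would put a shift-invariant Bernoulli (or Markov) measure $\nu$ on the itinerary space charging both crossing choices, push it forward through the coding to an $S$-invariant measure on $\Aa$, and use the Abramov formula for the associated induced/suspension structure — crossings occur with a definite, tunable frequency $1/(N+1)$ — to estimate its metric entropy from below by a positive constant of order $1/(N+1)$. Passing to an ergodic component gives the desired $\mu_\omega$ with positive metric entropy. Weak-$*$ closeness to $\sigma_\omega$ then follows from the same frequency control: orbits in $\mathrm{supp}\,\mu_\omega$ spend a proportion $N/(N+1)\to 1$ of their time shadowing $M_\omega$, so their empirical measures converge to $\sigma_\omega$, and taking $N$ large makes $\mu_\omega$ as close to $\sigma_\omega$ as required while keeping its entropy strictly positive.

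The hard part will be the second step: converting the purely qualitative hypothesis ``no invariant curve of rotation number $\omega$'' into the quantitative shadowing-and-crossing mechanism with at least two independent choices per crossing, and doing so with action estimates uniform enough that the coding genuinely carries positive entropy and the resulting invariant measure is supported where claimed. This is precisely Mather's delicate variational machinery for connecting orbits across a region of instability, and the technical crux is to control the interplay between the small but positive Peierls barrier on the gaps and the large number of shadowing steps; by comparison, the entropy computation via Abramov and the weak-$*$ convergence to $\sigma_\omega$ are routine once the coded invariant set is in hand.
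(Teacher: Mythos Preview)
The paper does not prove this statement at all: Theorem~\ref{Theo_forni} is quoted from Forni's paper \cite{forni} as a black-box input, and the appendix explicitly says ``For the proofs we refer to \cite{Aubry,bangert,forni,matherams,matherforni}.'' So there is no proof in the paper to compare your proposal against.

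That said, your sketch is a reasonable high-level outline of the ideas behind Forni's construction --- the Cantor structure of $M_\omega$ when no invariant curve exists, Mather's variational connecting orbits across gaps, a symbolic coding yielding positive entropy, and weak-$*$ closeness by letting the shadowing time dominate. If your goal were to actually reprove the result, be aware that Forni's original argument is more directly measure-theoretic (building invariant measures supported in the gaps via limits of periodic orbits and controlling entropy through the variational structure) rather than first producing a horseshoe-type coded set and then pushing forward a Bernoulli measure; the Angenent approach \cite{angenent1,angenent2} is closer in spirit to what you describe. Either route can be made to work, but the ``at least two independent crossing choices per gap'' step you flag as the hard part is indeed where the real analysis lies, and your sketch does not yet indicate how you would extract that from the Peierls barrier alone.
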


Finally we recall a result to prove whether the set $M_\omega$ is an invariant curve or not.

\begin{Pro}[\cite{mather_glancing}]\label{Mather_prop_a}
  Let $S$ be a $C^2$ diffeomorphism of the cylinder $\Aa$ as in Theorem \ref{Mather} and let $\Gamma$ be an invariant curve of $S$. Then 
  \begin{itemize}
  \item[(i)] $\Gamma$ is a minimal set and each orbit on $\Gamma$ has the same rotation number;
 \item[(ii)] for any orbit $(x_n,y_n)$ on $\Gamma$ it holds
  \[
a(x_{n-1},x_n,x_{n+1}) := h_{22}(x_{n-1},x_n)+h_{11}(x_n,x_{n+1})>0\, , \quad \forall \, n\in \ZZ\, .
  \]
    \end{itemize}
\end{Pro}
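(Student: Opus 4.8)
The statement is Mather's classical ``glancing'' criterion, and I would split the proof along its two parts: part (i) is a consequence of Birkhoff's theorem together with the theory of circle homeomorphisms, while part (ii) is the genuinely variational ingredient and is where I would put the original work.

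\textbf{Part (i).} The plan is first to invoke Birkhoff's theorem for exact symplectic twist maps of the cylinder: an invariant curve $\Gamma$ homotopic to the base circle is automatically the graph of a Lipschitz function $u:\TT\to\RR$, i.e. $\Gamma=\{(x,u(x)):x\in\TT\}$. Then the projection $\pi(x,y)=x$ conjugates $S|_\Gamma$ to an orientation-preserving circle homeomorphism $g:\TT\to\TT$ (orientation-preserving because $S$ preserves the ends of $\Aa$), which possesses a well-defined Poincaré rotation number; since every orbit of a circle homeomorphism has the rotation number of the map, all orbits of $S$ on $\Gamma$ share a common rotation number. For the minimality claim I would use the classical fact (Aubry, Mather, Bangert) that orbits lying on an invariant Lipschitz graph of a twist map are action-minimizing: a competing finite configuration with the same endpoints that beat an orbit on $\Gamma$ would, by the Aubry crossing lemma for extremals of a twist generating function, have to cross $\Gamma$ twice, which is impossible. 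For this step I would cite \cite{bangert,matherforni}.

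\textbf{Part (ii).} Fix an orbit $(x_n,y_n)$ on $\Gamma$; by part (i) it is minimal. Fix an index $n_0$ and integers $\ell<n_0-1$, $k>n_0+1$, and consider the finite action $H_{\ell k}$. Minimality forces $(x_\ell,\dots,x_k)$ to be a global, hence local, minimum of $H_{\ell k}$ among configurations with the same endpoints, so its Hessian is positive semidefinite on all variations $(\xi_n)$ with $\xi_\ell=\xi_k=0$. Since $h$ couples only consecutive indices, this Hessian is the tridiagonal Jacobi form with diagonal entries $a(x_{n-1},x_n,x_{n+1})=h_{22}(x_{n-1},x_n)+h_{11}(x_n,x_{n+1})$ and off-diagonal entries $h_{12}(x_n,x_{n+1})$. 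Testing it on the single-site variation $\xi=e_{n_0}$ already gives $a(x_{n_0-1},x_{n_0},x_{n_0+1})\ge 0$, so in particular a strictly negative value is impossible; the only thing left is to rule out the value $0$.

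To promote $\ge 0$ to $>0$ I would argue by contradiction: assume $a(x_{n_0-1},x_{n_0},x_{n_0+1})=0$ and evaluate the Hessian on the three-site variation $\xi_{n_0}=1$, $\xi_{n_0\pm 1}=s$, $\xi_n=0$ otherwise, with $s>0$ small. The value of the quadratic form is
\[
a(x_{n_0-1},x_{n_0},x_{n_0+1})+2s\big(h_{12}(x_{n_0-1},x_{n_0})+h_{12}(x_{n_0},x_{n_0+1})\big)+\big(a(x_{n_0-2},x_{n_0-1},x_{n_0})+a(x_{n_0},x_{n_0+1},x_{n_0+2})\big)s^2 .
\]
By hypothesis the constant term vanishes, and by the positive twist condition $h_{12}<0$, so the coefficient of the linear term in $s$ is strictly negative; hence for $s>0$ small the form is negative, contradicting positive semidefiniteness. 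Therefore $a(x_{n-1},x_n,x_{n+1})>0$ for every $n\in\ZZ$.

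\textbf{Main obstacle.} The second-variation computation is routine; the real content hides in part (i). Proving from scratch both that an essential invariant curve is a Lipschitz graph (Birkhoff) and that its orbits are action-minimizing are substantial classical theorems, and a self-contained treatment would essentially reproduce a sizeable chunk of Aubry–Mather theory. In practice I would lean on the cited literature for those two facts — as is done here, the proposition being quoted from \cite{mather_glancing} — and keep the original argument confined to the elementary variational reasoning of part (ii).
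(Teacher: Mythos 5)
The paper does not prove this proposition at all: it is quoted from Mather's glancing-billiards paper \cite{mather_glancing}, and Appendix \ref{sec:am} explicitly defers all proofs to the literature. So there is no internal argument to compare yours against; judged on its own merits, your proof is correct and is the standard one. In part (i) you correctly identify the two substantial classical ingredients --- Birkhoff's theorem that a rotational invariant curve of an exact symplectic twist map is a Lipschitz graph, and the fact that orbits on such a graph are action-minimizing --- and you are right that these cannot reasonably be reproved from scratch here; your one-line ``crossing'' justification of minimality is too thin to stand alone (the usual proofs calibrate the action via the exact Lagrangian graph, or run Bangert's ordering arguments), but citing \cite{bangert,matherforni} for it is exactly the level of rigor the paper itself adopts. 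Part (ii) is sound and complete: minimality makes the Hessian of $H_{\ell k}$ positive semidefinite on variations vanishing at the endpoints; this Hessian is the tridiagonal Jacobi form with diagonal entries $a(x_{n-1},x_n,x_{n+1})$ and off-diagonal entries $h_{12}(x_n,x_{n+1})$, which are strictly negative by hypothesis (ii) of Theorem \ref{Mather} (equivalently, by positive twist, since $h_{12}=-(\partial\bar x/\partial y)^{-1}$); the single-site test gives $a\ge 0$, and your three-site variation $\xi_{n_0}=1$, $\xi_{n_0\pm1}=s$ correctly rules out $a(x_{n_0-1},x_{n_0},x_{n_0+1})=0$, since the linear coefficient $2\bigl(h_{12}(x_{n_0-1},x_{n_0})+h_{12}(x_{n_0},x_{n_0+1})\bigr)$ is strictly negative and dominates for small $s>0$ regardless of the sign of the quadratic coefficient. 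The only housekeeping point is to choose $\ell\le n_0-2$ and $k\ge n_0+2$ so that all three perturbed sites are interior and the coefficients $a(x_{n_0-2},x_{n_0-1},x_{n_0})$, $a(x_{n_0},x_{n_0+1},x_{n_0+2})$ are well defined, which your stated inequalities already guarantee.
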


\end{document}